\newtheorem{theorem}{Theorem}[section]
\newtheorem{proposition}[theorem]{Proposition}
\theoremstyle{definition}
\newtheorem{lemma}[theorem]{Lemma}
\newtheorem{definition}[theorem]{Definition}
\newtheorem{corollary}[theorem]{Corollary}
\newtheorem{remark}[theorem]{Remark}
\begin{document}
\title[The noncommutative Gurarij space]{Uniqueness, universality, and
homogeneity of the noncommutative Gurarij space}
\author{Martino Lupini}
\address{Martino Lupini\\
Department of Mathematics and Statistics\\
N520 Ross, 4700 Keele Street\\
Toronto Ontario M3J 1P3, Canada.}
\curraddr{Mathematics Department California Institute of Technology 1200 E.
California Blvd MC 253-37 Pasadena, CA 91125}
\email{lupini@caltech.edu}
\urladdr{http://www.lupini.org/}
\thanks{The author was supported by the York University Susan Mann
Dissertation Scholarship.}
\dedicatory{}
\subjclass[2000]{Primary 46L07; Secondary 03C30}
\keywords{Operator space, $1$-exact operator space, noncommutative Gurarij
space, Fra\"{\i}ss\'{e} class, Fra\"{\i}ss\'{e} limit}

\begin{abstract}
We realize the noncommutative Gurarij space $\mathbb{NG}$ defined by
Oikhberg as the Fra\"{\i}ss\'{e} limit of the class of finite-dimensional $1$%
-exact operator spaces. As a consequence we deduce that the noncommutative
Gurarij space is unique up to completely isometric isomorphism, homogeneous,
and universal among separable $1$-exact operator spaces. We also prove that $%
\mathbb{NG}$ is the unique separable nuclear operator space with the
property that the canonical triple morphism from the universal TRO to the
triple envelope is an isomorphism. We deduce from this fact that $\mathbb{NG}
$ does not embed completely isometrically into an exact C*-algebra, and it
is not completely isometrically isomorphic to a C*-algebra or to a TRO. We
also provide a canonical construction of $\mathbb{NG}$, which shows that the
group of surjective complete isometries of $\mathbb{NG}$ is universal among
Polish groups. Analog results are proved in the commutative setting and,
more generally, for $M_{n}$-spaces. In particular, we provide a new
characterization and canonical construction of the Gurarij Banach space.
\end{abstract}

\maketitle

\section{Introduction}

The \emph{Gurarij space} $\mathbb{G}$ is a Banach space first constructed by
Gurarij in \cite{gurarij_spaces_1966}. It has the following universal
property: whenever $X\subset Y$ are finite-dimensional Banach spaces, $\phi
:X\rightarrow \mathbb{G}$ is a linear isometry, and $\varepsilon >0$, there
is an injective linear map $\psi :Y\rightarrow \mathbb{G}$ extending $\phi $
such that $||\psi ||{\;}||\psi ^{-1}||{}<1+\varepsilon $. The uniqueness of
such an object was proved by Lusky \cite{lusky_gurarij_1976}. A short proof
was later provided by Kubi\'{s} and Solecki \cite{kubis_proof_2013}. The
Gurarij space was realized as a Fra\"{\i}ss\'{e} limit by Ben Yaacov in \cite%
{ben_yaacov_fraisse_2012}.

\emph{Fra\"{\i}ss\'{e} theory} is a subject at the border between model
theory and combinatorics originating from the seminar work of Fra\"{\i}ss%
\'{e} \cite{fraisse_lextension_1954}. Broadly speaking, Fra\"{\i}ss\'{e}
theory studies homogeneous structures and ways to construct them. In the
discrete setting Fra\"{\i}ss\'{e} established in \cite%
{fraisse_lextension_1954} a correspondence between countable homogeneous
structures and what are now called Fra\"{\i}ss\'{e} classes. Let the \emph{%
age }of a countable structure $\mathbb{S}$ be the collection of finitely
generated substructures of $\mathbb{S}$. Any Fra\"{\i}ss\'{e} class is the
age of a countable homogeneous structure. Conversely from any Fra\"{\i}ss%
\'{e} class one can build a countable homogeneous structure that has the
given class as its age. Moreover such a structure is uniquely determined up
to isomorphism by this property.

This correspondence has been recently generalized in \cite%
{ben_yaacov_fraisse_2012} by Ben Yaacov from the purely discrete setting to
the setting where metric structures are considered; see also \cite%
{Schoretsanitis_fraisse_2007}. The main results of discrete Fra\"{\i}ss\'{e}
theory are recovered in this more general framework. In particular any Fra%
\"{\i}ss\'{e} class of metric structures is the age of a separable
homogeneous structure, which is unique up to isometric isomorphism. An
alternative category-theoretic approach to Fra\"{\i}ss\'{e} limits in the
metric setting has been developed by Kubi\'{s} \cite{kubis_fraisse_2014}.

The Gurarij space is the limit of the Fra\"{\i}ss\'{e} class of
finite-dimensional Banach spaces. This has been showed in \cite%
{ben_yaacov_fraisse_2012} building on previous work of Henson. In particular
this has yielded an alternative proof of the uniqueness of the Gurarij space
up to isometric isomorphism. Other naturally occurring examples of Fra\"{\i}%
ss\'{e} limits are the Urysohn universal metric space \cite%
{melleray_extremely_2014}, the hyperfinite II$_{1}$ factor, the infinite
type UHF\ C*-algebras, the Cuntz algebra $\mathcal{O}_{2}$ \cite%
{eagle_fraisse_2014}, and the Jiang-Su algebra $\mathcal{Z}$ \cite%
{masumoto_jiang-su_2016}.

In this paper we consider a noncommutative analog of the Gurarij space
introduced by Oikhberg in \cite{oikhberg_non-commutative_2006} within the
framework of operator spaces. Operator spaces can be regarded as
noncommutative Banach spaces. In fact Banach spaces can be concretely
defined as closed subspaces of $C(K)$ spaces, where $K$ is a compact
Hausdorff space. These are precisely the \emph{abelian }unital C*-algebras.
Replacing abelian C*-algebras with arbitrary C*-algebras
or---equivalently---the algebra $B(H)$ of bounded linear operators on some
Hilbert space $H$ provides the notion of an \emph{operator space}.

An operator space $X\subset B(H)$ is endowed with matricial norms on the
algebraic tensor product $M_{n}\otimes X$ obtained by the inclusion $%
M_{n}\otimes X\subset M_{n}\otimes B(H)\cong B\left( H\oplus \cdots \oplus
H\right) $. A linear operator $\phi $ between operator spaces is completely
bounded with norm at most $M$ if all its amplifications $id_{M_{n}}\otimes
\phi $ are bounded with norm at most $M$. The notion of complete isometry is
defined similarly. Operator spaces form then a category with completely
bounded (or completely isometric) linear maps as morphisms. Any Banach space 
$X$ has a canonical operator space structure induced by the inclusion $%
X\subset C(\mathrm{Ball}\left( X^{\ast }\right) )$ where $\mathrm{Ball}%
\left( X^{\ast }\right) $ is the unit ball of the dual of $X$. However in
this case the matricial norms do not provide any new information, and any
linear map $\phi $ between Banach spaces is automatically completely bounded
with same norm. For more general operator spaces it is far from being true
that any bounded linear map is completely bounded. The matricial norms play
in this case a crucial role.

According to \cite{oikhberg_non-commutative_2006} an operator space is \emph{%
noncommutative Gurarij} if it satisfies the same universal property of the
Gurarij Banach space, where finite-dimensional Banach spaces are replaced
with arbitrary\emph{\ finite-dimensional }$1$\emph{-exact} \emph{operator
spaces}, and the operator norm is replaced by the \emph{completely bounded
norm}. The restriction to $1$-exact spaces is natural since a famous result
of Junge and Pisier implies that there is no separable operator space
containing all the finite-dimensional operator spaces as subspaces \cite[%
Theorem 2.3]{junge_bilinear_1995}; see also \cite[Chapter 21]%
{pisier_introduction_2003}. The existence of a noncommutative Gurarij space
has been established in \cite{oikhberg_non-commutative_2006}. In this paper,
we prove that such a space is in fact unique, and satisfies the natural
noncommutative analogs of the uniqueness and universality property of the
Gurarij Banach space. We also provide a characterization and canonical
construction of the noncommutative Gurarij space using the theory of \emph{%
ternary ring of operators} (TRO) \cite[\S 4.4]{blecher_operator_2004}. TROs
are a natural nonselfadjoint generalization of C*-algebras, and play a key
role in the study of operator spaces \cite%
{effros_injectivity_2001,junge_OL_2003}. A TRO is, briefly, an off-diagonal
corner of a unital C*-algebra. Equivalently, TROs can be defined as the
closed subspaces of $B\left( H\right) $ that are invariant under the \emph{%
ternary product} $\left( x,y,z\right) \mapsto xy^{\ast }z$. There are two
canonical TROs associated with an operator space: the universal TRO $%
\mathcal{T}_{u}\left( X\right) $---see \S \ref{Subsection:triple-NG}---and
the triple envelope $\mathcal{T}_{e}\left( X\right) $ \cite%
{hamana_triple_1999}. These are, respectively, the largest and the smallest
(in a projective sense) TROs containing a completely isometric copy of $X$
as a generating set. The triple envelope $\mathcal{T}_{e}\left( X\right) $
is also known as the \emph{noncommutative Shilov boundary }of $X$, and can
be regarded as the noncommutative generalization of the Shilov boundary of a
function system \cite{blecher_shilov_2001}. The universal property of these
objects yields a canonical morphism $\sigma _{X}:\mathcal{T}_{u}\left(
X\right) \rightarrow \mathcal{T}_{e}\left( X\right) $.

The following statements summarizes the main results of the present paper.

\begin{theorem}
\label{Theorem:main-NG}Let $\mathbb{NG}$ denote the noncommutative Gurarij
space as defined by Oikhberg.

\begin{enumerate}
\item There exists a unique noncommutative Gurarij space up to completely
isometric isomorphism (uniqueness); see \S \ref{Subsection: NG as limit}.

\item Every separable $1$-exact operator space embeds completely
isometrically into $\mathbb{NG}$ (universality); see \S \ref{Subsection: NG
as limit}.

\item Every complete isometry between finite-dimensional subspaces of $%
\mathbb{NG}$ extends to a surjective complete isometry of $\mathbb{NG}$
(homogeneity). More precisely, if $E\subset \mathbb{NG}$ is a
finite-dimensional subspace and $\phi :E\rightarrow \mathbb{NG}$ is an
injective linear map such that $||\phi ||_{cb}<1+\delta $ and $||\phi
^{-1}||_{cb}<1+\delta $, then there exists a surjective complete isometry $%
\alpha $ of $\mathbb{NG}$ such that $\left\Vert \alpha |_{E}-\phi
\right\Vert <\delta $; see \S \ref{Subsection: NG as limit}.

\item $\mathbb{NG}$ is the unique separable nuclear operator space of
dimension at least $1$ with the property that the canonical triple morphism
from the universal TRO to the triple envelope is injective; see \S \ref%
{Subsection:triple-NG} and \S \ref{Subsection:characterization-NG}.

\item Suppose that $X$ is any separable $M_{n}$-space of dimension at least $%
1$. Define recursively for $k>n$, $X_{k+1}$ to be the universal $k$-minimal
TRO of $X_{k}$. Then the limit of the inductive sequence $\left(
X_{k}\right) $ is completely isometric to $\mathbb{NG}$, any surjective
complete isometry $\alpha $ of $X$ extends to a surjective complete isometry 
$\widehat{\alpha }$ of $\mathbb{NG}$, and the map $\alpha \mapsto \widehat{%
\alpha }$ is a group homomorphism and a topological embedding; see \S \ref%
{Subsection:canonical-NG}.

\item Every Polish group is isomorphic to a closed subgroup of the group $%
\mathrm{Aut}\left( \mathbb{NG}\right) $ of surjective complete isometries of 
$\mathbb{NG}$; see \S \ref{Subsection:universal}.

\item $\mathbb{\mathbb{NG}}$ does not embed completely isometrically into an
exact C*-algebra or an exact TRO; see \S \ref{Subsection:triple-NG}.

\item $\mathbb{NG}$ is not completely isometrically isomorphic to a
C*-algebra or a TRO; see \S \ref{Subsection:triple-NG}.
\end{enumerate}
\end{theorem}

The Gurarij operator space $\mathbb{NG}$ is the first example of a separable 
$1$-exact operator space that contains a completely isometric copy of any
other separable $1$-exact operator space. It is also the first example of a
separable $1$-exact operator space that does not embed completely
isometrically into an exact C*-algebra, and the first example of a separable 
$1$-exact operator space that is not an $M_{n}$-space for any $n\in \mathbb{N%
}$ and whose automorphism group is universal among Polish groups. Item (8)
of Theorem \ref{Theorem:main-NG} answers a question of Oikhberg from \cite%
{oikhberg_non-commutative_2006}.

The analogous statements as in Theorem \ref{Theorem:main-NG} are also proved
for the class of $M_{n}$-spaces. We say that an $M_{n}$-space is Gurarij if
it satisfies the same property as the noncommutative Gurarij space, where $%
X\subset Y$ are only assumed to be $M_{n}$-spaces. When $n=1$ one recovers
the classical Gurarij Banach space. A TRO is $n$-minimal if it is an $M_{n}$%
-space. To any $M_{n}$-space $X$ one can associated the $n$-minimal analog
of the universal TRO, which we call the universal $n$-minimal TRO $\mathcal{T%
}_{u}^{n}\left( X\right) $. Again, one has a canonical morphism $\sigma
_{X}^{n}:\mathcal{T}_{u}^{n}\left( X\right) \rightarrow \mathcal{T}%
_{e}\left( X\right) $.

\begin{theorem}
\label{Theorem:main-G_n}Let $\mathbb{G}_{n}$ be a Gurarij $M_{n}$-space.

\begin{enumerate}
\item There exists a unique Gurarij $M_{n}$-space up to completely isometric
isomorphism (uniqueness); see \S \ref{Subsection:LimitMn}.

\item Every separable $M_{n}$-space embeds completely isometrically into $%
\mathbb{G}_{n}$ (universality); see \S \ref{Subsection:LimitMn}.

\item Every complete isometry between finite-dimensional subspaces of $%
\mathbb{G}_{n}$ extends to a surjective complete isometry of $\mathbb{G}_{n}$
(homogeneity); see \S \ref{Subsection:LimitMn}.

\item $\mathbb{G}_{n}$ is the unique separable nuclear $M_{n}$-space with
the property that the canonical triple morphism from the universal $n$%
-minimal TRO to the triple envelope is injective; see \S \ref%
{Subsection:triple-Gn}.

\item Suppose that $X$ is any separable $M_{n}$-space of dimension at least $%
1$. Define recursively $X_{k+1}$ to be the universal $n$-minimal TRO of $%
X_{k}$. Then the limit of the inductive sequence $\left( X_{k}\right) $ is
completely isometric to $\mathbb{G}_{n}$, any surjective complete isometry $%
\alpha $ of $X$ extends to a surjective complete isometry $\widehat{\alpha }$
of $\mathbb{G}_{n}$, and the map $\alpha \mapsto \widehat{\alpha }$ is a
group homomorphism and a topological embedding; see \S \ref%
{Subsection:canonical-NG}.

\item Every Polish group is isomorphic to a closed subgroup of the group $%
\mathrm{Aut}\left( \mathbb{G}_{n}\right) $ of surjective complete isometries
of $\mathbb{G}_{n}$; see \S \ref{Subsection:universal};

\item $\mathbb{G}_{n}$ can not be written in a nontrivial way as a tensor
product of two rigid rectangular $\mathcal{OL}_{\infty ,1+}$ spaces.
\end{enumerate}
\end{theorem}

Items (4),(5) of Theorem \ref{Theorem:main-G_n} are new even in the case $%
n=1 $. They provide a new characterization of the Gurarij Banach space $%
\mathbb{G}$ among separable Lindenstrauss space, and an explicit canonical
construction of $\mathbb{G}$. The fact that the group of surjective linear
isometries of the \emph{real }Gurarij Banach space is a universal Polish
group is a result of Ben Yaacov from \cite{ben_yaacov_linear_2014}. Ben
Yaacov's proof seems to make an essential use of the fact that the scalars
are real. Item (6) of Theorem \ref{Theorem:main-G_n} for $n=1$ shows in
particular that the same conclusion holds over the complex field.

The operator system analog of Theorem \ref{Theorem:main-NG}, where one
replaces operator spaces with operator systems and TROs with unital
C*-algebras, also holds. The Gurarij operator system $\mathbb{GS}$ is the
unique nuclear operator system that is universal in the sense of sense of
Kirchberg and Wasserman\cite[\S 6]{kirchberg_c*-algebras_1998}. It is also
the unique nuclear operator system whose matrix state space has dense matrix
extreme boundary in the sense of \cite{farenick_extremal_2000}. This shows
that the matrix state space of $\mathbb{GS}$ can be regarded as the
noncommutative analog of the Poulsen simplex \cite%
{poulsen_simplex_1961,lindenstrauss_poulsen_1978}. These results will be
included in \cite{davidson_noncommutative_2016}, where many classical
results from the theory of simplices are generalized to the noncommutative
setting.

This rest of this paper is divided into four sections. Section \ref%
{Section:background-material} contains some background material on Fra\"{\i}%
ss\'{e} theory and operator spaces. We follow the presentation of Fra\"{\i}ss%
\'{e} theory for metric structures as introduced by Ben Yaacov in \cite%
{ben_yaacov_fraisse_2012}. Similarly as \cite{melleray_extremely_2014} we
adopt the slightly less general point of view---sufficient for our
purposes---where one considers only structures where the interpretation of
function and relation symbols are Lipschitz with a constant that does not
depend on the structure. The material on operator spaces is standard and can
be found for example in the monographs \cite%
{pisier_introduction_2003,effros_operator_2000,paulsen_completely_2002}. The
topic of $M_{n}$-spaces is perhaps less well known and can be found in
Lehrer's PhD thesis \cite{lehner_mn-espaces_1997} as well as in \cite%
{oikhberg_operator_2004,oikhberg_non-commutative_2006}.

In Section \ref{Section:LimitMn} we show that the class of
finite-dimensional $M_{n}$-spaces is a Fra\"{\i}ss\'{e} class. This can be
seen as a first step towards proving that the class of finite-dimensional $1$%
-exact operator spaces is a Fra\"{\i}ss\'{e} class. Any $M_{n}$-space can be
canonically endowed with a compatible operator space structure. Therefore in
principle it is possible to rephrase all the arguments and results in terms
of operator spaces. Nonetheless we find it more convenient and enlightening
to deal with $M_{n}$-space. This allows one to recognize and use the analogy
with the Banach space case.

Section \ref{Section:NG} contains the proof that the class of
finite-dimensional $1$-exact operator spaces is a Fra\"{\i}ss\'{e} class,
and the limit is the noncommutative Gurarij space as defined by Oikhberg.
Finally Section \ref{Section:properties} contains the proof of further
properties of the noncommutative Gurarij space $\mathbb{NG}$ and the Gurarij 
$M_{n}$-spaces $\mathbb{G}_{n}$, as stated in Theorem \ref{Theorem:main-NG}
and Theorem \ref{Theorem:main-G_n}.

\subsubsection*{Acknowledgments}

We would like to thank Kenneth Davidson, Ilijas Farah, Isaac Goldbring,
Michael Hartz, Marius Junge, Alexander Kechris, Matthew Kennedy, Jorge L\'{o}%
pez-Abad, Wieslaw Kubi\'{s}, Timur Oikhberg, and Todor Tsankov for many
helpful comments and suggestions. Many thanks are due to the anonymous
referee, whose suggestions and remarks significantly contributed to improve
the present paper. We are also grateful to Nico Spronk for the inspiring
course \textquotedblleft Fourier and Fourier-Stieltjes Algebras, and their
Operator Space Structure\textquotedblright\ that he gave at the Fields
Institute in March-April 2014. Finally, we would like to thank Caleb
Eckhardt for suggesting the proof of Proposition \ref%
{Proposition:perturbation}, and for letting us include it here.

\section{Background material\label{Section:background-material}}

\subsection{Approximate isometries\label{Subsection:approximate isometries}}

Suppose that $A,B$ are complete metric spaces. A\emph{\ Katetov function} on 
$A$ is a $1$-Lipchitz map $f:X\rightarrow \left[ 0,+\infty \right] $
satisfying $d\left( x,y\right) \leq f(x)+f(y)$. An \emph{approximate isometry%
} from $A$ to $B$ is a map $\psi :A\times B\rightarrow \left[ 0,+\infty %
\right] $ that is a Katetov function in each variable. If $\psi $ is an
approximate isometry from $A$ to $B$, then we write $\psi :A\leadsto B$. The
set of all approximate isometries from $A$ to $B$ is denoted by $\mathrm{Apx}%
\left( A,B\right) $. This is a compact space endowed with the product
topology from $\left[ 0,+\infty \right] ^{A\times B}$. A \emph{partial
isometry }$f$\emph{\ }from $A$ to $B$ is an isometry from a subset $\mathrm{%
dom}\left( f\right) $ of $A$ to $B$.

\begin{remark}
\label{Remark: piso is apx}Any partial isometry $f$ will be identified with
the approximate isometry $\psi _{f}$ given by the distance function from the
graph of $f$.
\end{remark}

If $\psi :A\leadsto B$ one can consider its \emph{pseudo-inverse }$\psi
^{\ast }:B\leadsto A$ defined by $\psi ^{\ast }\left( b,a\right) =\psi
\left( a,b\right) $. Moreover one can take \emph{composition} of approximate
isometries $\psi :A\leadsto B$ and $\phi :B\leadsto C$ by setting $\left(
\phi \psi \right) \left( a,c\right) $ to be the infimum of $\psi \left(
a,b\right) +\phi \left( b,c\right) $ for $b\in B$. These definitions are
consistent with composition and inversion of partial isometries when
regarded as approximate isometries.

If $A_{0}\subset A$, $B_{0}\subset B$, and $\psi :A\leadsto B$ then one can
define the \emph{restriction} $\psi |_{A_{0}\times B_{0}}=j^{\ast }\psi
i:A_{0}\leadsto B_{0}$ where $i$ and $j$ are the inclusion maps of $A_{0}$
into $A$ and $B_{0}$ into $B$. Conversely if $\phi :A_{0}\leadsto B_{0}$
then one can consider its \emph{trivial extension} $j\phi i^{\ast
}:A\leadsto B$. This allows one to regard $\mathrm{Apx}\left(
A_{0},B_{0}\right) $ as a subset of $\mathrm{Apx}\left( A,B\right) $ by
identifying an approximate isometry with its trivial extension.

For approximate isometries $\phi ,\psi :A\leadsto B$ we say that $\phi $ 
\emph{refines }$\psi $ and $\psi $ \emph{coarsens }$\phi $---written $\phi
\leq \psi $---if $\phi \left( a,b\right) \leq \psi \left( a,b\right) $ for
every $a\in A$ and $b\in B$. The set of approximate isometries that refine $%
\psi $ is denoted by $\mathrm{Apx}^{\leq \psi }\left( A,B\right) $. The
interior of $\mathrm{Apx}^{\leq \psi }\left( A,B\right) $ is denote by $%
\mathrm{Apx}^{<\psi }\left( A,B\right) $. The \emph{closure under coarsening 
}$\mathcal{A}^{\uparrow }$ of a set $\mathcal{A}\subset \mathrm{Apx}\left(
A,B\right) $ is the collection of $\phi \in \mathrm{Apx}\left( A,B\right) $
that coarsen some element of $\mathcal{A}$.

\subsection{Languages and structures\label{Subsection: languages and
structures}}

A \emph{language} $\mathcal{L}$ is given by sets of predicate symbols and of
function symbols. Every symbol has two natural numbers attached: its arity
and its Lipschitz constant. An $\mathcal{L}$-\emph{structure} $\mathfrak{A}$
is given by: a complete metric space $A$; a $c_{B}$-Lipschitz function $B^{%
\mathfrak{A}}:A^{n_{B}}\rightarrow \mathbb{R}$ for every predicate symbol $B$%
, where $c_{B}$ is the Lipschitz constant of $B$ and $n_{B}$ is the arity of 
$B$; a $c_{f}$-Lipschitz function $f^{\mathfrak{A}}:A^{n_{f}}\rightarrow A$
for every function symbol $f$, where $c_{f}$ is the Lipschitz constant of $f$
and $n_{f}$ is the arity of $f$.

Here and in the following we assume the power $A^{n}$ to be endowed with the
max metric $d(\bar{a},\bar{b})=\max_{i}d(a_{i},b_{i})$. An \emph{embedding }%
of $\mathcal{L}$-structures $\phi :\mathfrak{A}\rightarrow \mathfrak{B}$ is
a function that commutes with the interpretation of all the predicate and
function symbols. An \emph{isomorphism }is a surjective embedding. An \emph{%
automorphism} of $\mathfrak{A}$ is an isomorphism from $\mathfrak{A}$ to $%
\mathfrak{A}$. If $\bar{a}$ is a finite tuple in $\mathfrak{A}$ then $%
\left\langle \bar{a}\right\rangle $ denotes the smallest substructure of $%
\mathfrak{A}$ containing $\bar{a}$. A \emph{partial isomorphism }$\phi :%
\mathfrak{A}\dashrightarrow \mathfrak{B}$ is an embedding from $\left\langle 
\bar{a}\right\rangle $ to $\mathfrak{B}$ for some finite tuple $\bar{a}$ in $%
\mathfrak{A}$. An $\mathcal{L}$-structure $\mathfrak{A}$ is \emph{finitely
generated }if $\mathfrak{A}=\left\langle \bar{a}\right\rangle $ for some
finite tuple $\bar{a}$ in $\mathfrak{A}$.

We will assume that the language $\mathcal{L}$ contains a distinguished
binary predicate symbol to be interpreted as the metric. In particular this
ensures that all the embeddings and (partial) isomorphisms are (partial)
isometries. Therefore consistently with the convention from Remark \ref%
{Remark: piso is apx} partial isomorphisms will be regarded as approximate
isometries.

\begin{definition}
\label{Definition: NAP}Suppose that $\mathcal{C}$ is a class of
finitely-generated $\mathcal{L}$-structure. We say that $\mathcal{C}$
satisfies

\begin{itemize}
\item the \emph{hereditary property }(HP)\emph{\ }if $\left\langle \bar{a}%
\right\rangle \in \mathcal{C}$ for every $\mathfrak{A}\in \mathcal{C}$ and
finite tuple $\bar{a}\in \mathfrak{A}$,

\item the \emph{joint embedding property }(JEP)\emph{\ }if for any $%
\mathfrak{A},\mathfrak{B}\in \mathcal{C}$ there is $\mathfrak{C}\in \mathcal{%
C}$ and embeddings $\phi :\mathfrak{A}\rightarrow \mathfrak{C}$ and $\psi :%
\mathfrak{B}\rightarrow \mathfrak{C}$,

\item the \emph{near amalgamation property }(NAP) if, whenever $\mathfrak{A}%
\subset \mathfrak{B}_{0}$ and $\mathfrak{B}_{1}$ are elements of $\mathcal{C}
$, $\phi :\mathfrak{A}\rightarrow \mathfrak{B}_{1}$ is an embedding, $\bar{a}
$ is a finite tuple in $\mathfrak{A}$, and $\varepsilon >0$, there exists $%
\mathfrak{C}\in \mathcal{C}$ and embeddings $\psi _{0}:\mathfrak{B}%
_{0}\rightarrow \mathfrak{C}$ and $\psi _{1}:\mathfrak{B}_{1}\rightarrow 
\mathfrak{C}$ such that $d\left( \psi _{0}(\bar{a}),\left( \psi _{1}\circ
\varphi \right) (\bar{a})\right) \leq \varepsilon $.

\item the \emph{amalgamation property }(AP) if it satisfies\emph{\ }(NAP)
even when one takes $\varepsilon =0$.
\end{itemize}
\end{definition}

\subsection{Fra\"{\i}ss\'{e} classes and limits\label%
{Subsection:Fraisseclasses}}

Suppose in the following that $\mathcal{C}$ is a class of finitely generated 
$\mathcal{L}$-structures satisfying (HP), (JEP), and (NAP).

\begin{definition}
\label{Definition: C-structure}A $\mathcal{C}$-structure is an $\mathcal{L}$%
-structure $\mathfrak{A}$ such that $\left\langle \bar{a}\right\rangle \in 
\mathcal{C}$ for every finite tuple $\bar{a}$ in $\mathfrak{A}$.
\end{definition}

Let $\mathfrak{A}$ and $\mathfrak{B}$ be $\mathcal{C}$-structures. Define $%
\mathrm{Apx}_{1,\mathcal{C}}\left( \mathfrak{A},\mathfrak{B}\right) \subset 
\mathrm{Apx}\left( A,B\right) $ to be the set of all partial isomorphisms
from $\mathfrak{A}$ to $\mathfrak{B}$. Define $\mathrm{Apx}_{2,\mathcal{C}%
}\left( \mathfrak{A},\mathfrak{B}\right) $ to be the set of approximate
isometries $\phi :A\leadsto B$ of the form $\phi =g^{\ast }f$, where $f\in 
\mathrm{Apx}_{1,\mathcal{C}}\left( \mathfrak{A},\mathfrak{C}\right) $ and $%
g\in \mathrm{Apx}_{1,\mathcal{C}}\left( \mathfrak{B},\mathfrak{C}\right) $
for some $\mathcal{C}$-structure $\mathfrak{C}$. Finally set $\mathrm{Apx}_{%
\mathcal{C}}\left( \mathfrak{A},\mathfrak{B}\right) \subset \mathrm{Apx}%
\left( A,B\right) $ to be $\overline{\mathrm{Apx}_{2,\mathcal{C}}\left( 
\mathfrak{A},\mathfrak{B}\right) ^{\uparrow }}$. Elements of $\mathrm{Apx}_{%
\mathcal{C}}\left( \mathfrak{A},\mathfrak{B}\right) $ are called ($\mathcal{C%
}$-intrinsic) \emph{approximate morphism}. A ($\mathcal{C}$-intrinsic) \emph{%
strictly approximate morphism from }$\mathfrak{A}$ to $\mathfrak{B}$ is an
approximate morphism $\phi $ such that the interior $\mathrm{Apx}_{\mathcal{C%
}}^{<\phi }\left( \mathfrak{A},\mathfrak{B}\right) $ of $\mathrm{Apx}_{%
\mathcal{C}}^{\leq \phi }\left( \mathfrak{A}\mathbf{,}\mathfrak{B}\right) $
is nonempty. The set of strictly approximate morphisms from $\mathfrak{A}$
to $\mathfrak{B}$ is denoted by $\mathrm{Stx}_{\mathcal{C}}\left( \mathfrak{A%
},\mathfrak{B}\right) $.

Fix $k\in \mathbb{N}$ and denote by $\mathcal{C}(k)$ the set of pairs $%
\left( \bar{a},\mathfrak{A}\right) $ where $\mathfrak{A}\in \mathcal{C}$ and 
$\bar{a}$ is a finite tuple in $\mathfrak{A}$ such that $\mathfrak{A}%
=\,\left\langle \bar{a}\right\rangle $. Two such pairs $\left( \bar{a},%
\mathfrak{A}\right) $ and $\left( \bar{b},\mathfrak{B}\right) $ are
identified if there is an isomorphism $\phi :\mathfrak{A}\rightarrow 
\mathfrak{B}$ such that $\phi (\bar{a})=\bar{b}$. By abuse of notation we
will denote $\left( \bar{a},\mathfrak{A}\right) $ simply by $\bar{a}$.

\begin{definition}
\label{Definition:FraisseMetric}The Fra\"{\i}ss\'{e} metric $d_{\mathcal{C}}(%
\bar{a},\bar{b})$ on $\mathcal{C}(k)$ is defined to be the infimum of $%
\max_{i}\phi (a_{i},b_{i})$, where $\phi $ ranges in $\mathrm{Apx}_{\mathcal{%
C}}(\left\langle \bar{a}\right\rangle ,\left\langle \bar{b}\right\rangle )$
or, equivalently, in $\mathrm{Stx}_{\mathcal{C}}(\left\langle \bar{a}%
\right\rangle ,\left\langle \bar{b}\right\rangle )$.
\end{definition}

Such a metric can be equivalently described in terms of embeddings: it is
the infimum of $d(f(\bar{a}),g(\bar{b}))$, where $f,g$ range over all the
embeddings of $\left\langle \bar{a}\right\rangle $ and $\left\langle \bar{b}%
\right\rangle $ into a third structure $\mathfrak{C}\in \mathcal{C}$.

\begin{definition}
\label{Definition:Fraisseclass}Suppose that $\mathcal{C}$ is a class of
finitely-generated $\mathcal{L}$-structures satisfying (HP), (JEP), and
(NAP) from Definition \ref{Definition: NAP}. We say that $\mathcal{C}$ is a%
\emph{\ Fra\"{\i}ss\'{e} class} if the metric space $\left( \mathcal{C}%
(k),d_{\mathcal{C}}\right) $ is complete and separable for every $k\in 
\mathbb{N}$.
\end{definition}

\begin{remark}
In \cite[Definition 2.12]{ben_yaacov_fraisse_2012} a Fra\"{\i}ss\'{e} class
is moreover required to satisfy the Continuity Property. Such a property is
automatically satisfied in our more restrictive setting, where we assume
that the interpretation of any symbol from $\mathcal{L}$ is a Lipschitz
function with Lipschitz constant that does not depend from the structure.
\end{remark}

\begin{definition}
\label{Definition:Fraisselimit}Suppose that $\mathcal{C}$ is a Fra\"{\i}ss%
\'{e} class. A limit of $\mathcal{C}$ is a separable $\mathcal{C}$-structure 
$\mathfrak{M}$ satisfying the following property: For every $\mathfrak{A}\in 
\mathcal{C}$, finite tuple $\bar{a}$ in $\mathfrak{A}$, embedding $\phi
:\left\langle \bar{a}\right\rangle \rightarrow \mathfrak{M}$, and $%
\varepsilon >0$ there is an embedding $\psi :\mathfrak{A}\rightarrow 
\mathfrak{M}$ such that $d\left( \psi (\bar{a}),\phi (\bar{a})\right)
<\varepsilon $.
\end{definition}

The definition given above is equivalent to \cite[Definition 2.14]%
{ben_yaacov_fraisse_2012} in view of \cite[Corollary 2.20]%
{ben_yaacov_fraisse_2012}.

\begin{definition}
An $\mathcal{L}$-structure $\mathfrak{M}$ is \emph{homogeneous} if for every
finite tuple $\bar{a}$ in $\mathfrak{A}$, embedding $\phi :\left\langle \bar{%
a}\right\rangle \rightarrow \mathfrak{A}$, and $\varepsilon >0$, there is an
automorphism $\psi $ of $\mathfrak{M}$ such that $d\left( \phi (\bar{a}%
),\psi (\bar{a})\right) <\varepsilon $.
\end{definition}

The following theorem is a combination of the main results from \cite%
{ben_yaacov_fraisse_2012}.

\begin{theorem}[Ben Yaacov]
\label{Theorem:Fraisselimit}Suppose that $\mathcal{C}$ is a Fra\"{\i}ss\'{e}
class. Then $\mathcal{C}$ has a limit $\mathfrak{M}$. If $\mathfrak{M}%
^{\prime }$ is another limit of $\mathcal{C}$ then $\mathfrak{M}$ and $%
\mathfrak{M}^{\prime }$ are isomorphic as $\mathcal{L}$-structures. Moreover 
$\mathfrak{M}$ is homogeneous and contains any separable $\mathcal{C}$%
-structure as a substructure.
\end{theorem}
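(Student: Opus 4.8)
The plan is to run the classical Fra\"{\i}ss\'{e} back-and-forth in the metric setting, in three stages: (i) build a limit by amalgamation with bookkeeping; (ii) upgrade the one-step near-extension property of that construction to the full limit property of Definition~\ref{Definition:Fraisselimit}, deducing universality along the way; and (iii) derive uniqueness and homogeneity from a single back-and-forth whose accumulated errors are summable. For (i), separability of each $(\mathcal{C}(k),d_{\mathcal{C}})$ provides a countable $d_{\mathcal{C}}$-dense list of ``tasks'', namely pairs $(\bar{a},\mathfrak{A})$ with $\mathfrak{A}\in\mathcal{C}$ generated by $\bar{a}$. Using (JEP) to begin and (NAP) to continue, I build an increasing chain $\mathfrak{A}_{0}\subset\mathfrak{A}_{1}\subset\cdots$ in $\mathcal{C}$ that is \emph{rich}: for every $n$, every finite tuple $\bar{a}$ in $\mathfrak{A}_{n}$, every embedding $\phi\colon\langle\bar{a}\rangle\to\mathfrak{B}$ with $\mathfrak{B}\in\mathcal{C}$, and every $\varepsilon>0$, there are $m>n$ and an embedding $\psi\colon\mathfrak{B}\to\mathfrak{A}_{m}$ with $d(\psi(\phi(\bar{a})),\bar{a})\le\varepsilon$; at the inductive step one applies (NAP) to $\langle\bar{a}\rangle\subset\mathfrak{A}_{n}$ and $\phi$, identifies $\mathfrak{A}_{n}$ with its image in the approximate amalgam, and takes the latter as $\mathfrak{A}_{n+1}$, a standard schedule guaranteeing every task is eventually served. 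Let $\mathfrak{M}$ be the metric completion of $\bigcup_{n}\mathfrak{A}_{n}$, with the symbols of $\mathcal{L}$ interpreted by the uniformly continuous, hence uniquely extending, functions inherited from the chain. Then $\mathfrak{M}$ is separable, and (HP) together with completeness of each $(\mathcal{C}(k),d_{\mathcal{C}})$ shows every finitely generated substructure of $\mathfrak{M}$ lies in $\mathcal{C}$, so $\mathfrak{M}$ is a $\mathcal{C}$-structure.

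For (ii), given $\mathfrak{A}\in\mathcal{C}$, a tuple $\bar{a}$ generating a substructure of $\mathfrak{A}$, an embedding $\phi\colon\langle\bar{a}\rangle\to\mathfrak{M}$, and $\varepsilon>0$, I perturb $\phi(\bar{a})$ into some finite stage $\mathfrak{A}_{n}$; this yields a strictly approximate morphism $\langle\bar{a}\rangle\leadsto\mathfrak{A}_{n}$ that is $d_{\mathcal{C}}$-close to $\phi$, which is precisely the situation the formalism of $\mathrm{Apx}_{\mathcal{C}}$ and $\mathrm{Stx}_{\mathcal{C}}$ was set up to handle. Realizing such an approximate morphism by honest embeddings into a larger member of $\mathcal{C}$ and then invoking richness absorbs it back into the chain; iterating this with a summable error schedule produces embeddings $\psi_{j}\colon\mathfrak{A}\to\mathfrak{A}_{n_{j}}$ that are Cauchy on a fixed countable dense subset of $\mathfrak{A}$. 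Since the $\psi_{j}$ are isometries and $\mathfrak{M}$ is complete they converge pointwise to a map $\psi\colon\mathfrak{A}\to\mathfrak{M}$, which is an embedding because a pointwise limit of embeddings still commutes with the continuous interpretations of all symbols, and $d(\psi(\bar{a}),\phi(\bar{a}))<\varepsilon$. Running the same iteration along an exhausting chain of finitely generated substructures of an arbitrary separable $\mathcal{C}$-structure $\mathfrak{N}$ produces an embedding $\mathfrak{N}\to\mathfrak{M}$, which is the universality statement.

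For (iii), let $\mathfrak{M}$ and $\mathfrak{M}'$ be limits of $\mathcal{C}$; for homogeneity one takes $\mathfrak{M}'=\mathfrak{M}$ and seeds the recursion with the given embedding $\phi\colon\langle\bar{a}\rangle\to\mathfrak{M}$. Fix countable dense generating sequences $(x_{i})$ in $\mathfrak{M}$ and $(y_{i})$ in $\mathfrak{M}'$ and reals $\varepsilon_{j}>0$ with $\sum_{j}\varepsilon_{j}<\infty$. Alternating ``forth'' and ``back'' steps, I construct partial isomorphisms $f_{j}$ defined on finitely generated substructures $\langle\bar{c}_{j}\rangle$ of $\mathfrak{M}$, using at the $j$-th forth step the limit property of $\mathfrak{M}'$ from stage (ii) to enlarge the domain so that $x_{j}$ is covered up to $\varepsilon_{j}$, and dually, through pseudo-inverses, at the back steps for $y_{j}$. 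Each step perturbs the finite data fixed so far, but summability forces the partial maps to converge uniformly on $\{x_{i}\}$ to a map $f\colon\mathfrak{M}\to\mathfrak{M}'$; the forth steps make $f$ well defined, the back steps make it surjective, and $f$ is an isometry commuting with all symbols, hence an isomorphism. When $\mathfrak{M}'=\mathfrak{M}$ this $f$ is an automorphism with $d(f(\bar{a}),\phi(\bar{a}))$ as small as we wish.

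The step I expect to be the main obstacle is the metric bookkeeping in (ii) and (iii). Because (NAP) delivers only \emph{approximate} amalgams, no stage of any construction ever produces an honest embedding into a single finite $\mathfrak{A}_{n}$ or into a fixed finitely generated piece, so, unlike in discrete Fra\"{\i}ss\'{e} theory, one cannot freeze part of a map and must instead arrange every ``extension'' step so that the errors telescope, and then lean on completeness of $\mathfrak{M}$ to pass the resulting Cauchy sequences of partial maps to genuine embeddings. The hypothesis that each symbol of $\mathcal{L}$ is interpreted by a function with a fixed Lipschitz constant is exactly what makes ``a pointwise limit of embeddings is an embedding'' hold with no further work; in the more general framework of \cite{ben_yaacov_fraisse_2012} that role is played by the Continuity Property.
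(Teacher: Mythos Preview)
The paper does not prove Theorem~\ref{Theorem:Fraisselimit} at all: it is stated as ``a combination of the main results from \cite{ben_yaacov_fraisse_2012}'' and cited without proof. So there is nothing in the paper to compare your proposal against.

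That said, your sketch is essentially the standard argument from \cite{ben_yaacov_fraisse_2012}, and it is sound at the level of a proof sketch. The three-stage plan (rich chain via scheduled amalgamations, then upgrading richness to the limit property via a Cauchy iteration, then back-and-forth with summable errors) is exactly the architecture Ben~Yaacov uses, and you correctly identify the one genuinely metric subtlety: because (NAP) only gives approximate amalgams, every ``extension'' step perturbs previously fixed data, and one must control the accumulated error by a summable schedule and then pass to limits using completeness of $\mathfrak{M}$ and the uniform Lipschitz bounds on symbol interpretations. Your remark that the fixed Lipschitz constants replace Ben~Yaacov's Continuity Property is also on the mark and matches the paper's own comment after Definition~\ref{Definition:Fraisseclass}. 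One minor point: in stage (ii) you describe the absorption of an approximate morphism in somewhat informal language; in \cite{ben_yaacov_fraisse_2012} this is packaged via the machinery of $\mathrm{Stx}_{\mathcal{C}}$ and \cite[Lemma~2.16]{ben_yaacov_fraisse_2012}, which the present paper also invokes (see the proof of Proposition~\ref{Proposition: characterization limit Mn}). If you were to write this out in full you would want to go through that formalism rather than improvise.
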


\subsection{Operator spaces\label{Subsection: operator spaces}}

An operator space is a closed subspace of $B(H)$. Here and in the following
we denote by $M_{n}$ the algebra of $n\times n$ complex matrices. If $X$ is
a complex vector space, then we denote by $M_{n}\otimes X$ the algebraic
tensor product. Observe that this can be canonically identified with the
space $M_{n}(X)$ of $n\times n$ matrices with entries from $X$. If $X\subset
B(H)$ is an operator space, then $M_{n}(X)$ is naturally endowed with a norm
given by the inclusion $M_{n}(X)\subset B(H\oplus \cdots \oplus H)$. If $%
\phi :X\rightarrow Y$ is a linear map between operator spaces, then its $n$%
-th amplification is the linear map $id_{M_{n}}\otimes \phi :M_{n}\otimes
X\rightarrow M_{n}\otimes Y$. Under the identification of $M_{n}\otimes X$
with $M_{n}(X)$ and $M_{n}\otimes Y$ with $M_{n}(Y)$, the map $%
id_{M_{n}}\otimes \phi $ is defined by applying $\phi $ to a matrix $\left[
x_{ij}\right] $ entrywise. We say that $\phi $ is completely bounded if the
norms of its amplifications are uniformly bounded. In such case we define
its completely bounded norm to be the least uniform upper bound for the
norms of its amplifications. A linear map $\phi $ is a complete contraction
if $id_{M_{n}}\otimes \phi $ is a contraction for every $n\in \mathbb{N}$.
It is a complete isometry if $id_{M_{n}}\otimes \phi $ is an isometry for
every $n\in \mathbb{N}$. Finally it is a completely isometric isomorphism if 
$id_{M_{n}}\otimes \phi $ is an isometric isomorphism for every $n\in 
\mathbb{N}$.

Operator spaces admit an abstract characterization due to Ruan \cite%
{ruan_subspaces_1988}. A vector space $X$ is matrix-normed if for every $%
n\in \mathbb{N}$ the space $M_{n}(X)$ is endowed with a norm such that
whenever $\alpha \in M_{k,n}$, $x\in M_{n}(X)$, and $\beta \in M_{n,k}$, $%
\left\Vert \alpha .x.\beta \right\Vert \leq \left\Vert \alpha \right\Vert
\left\Vert x\right\Vert \left\Vert \beta \right\Vert $, where $\alpha
.x.\beta $ denotes the matrix product, and $\left\Vert \alpha \right\Vert
,\left\Vert \beta \right\Vert $ are the norms of $\alpha $ and $\beta $
regarded as operators between finite-dimensional Hilbert spaces. A
matrix-normed vector space is $L^{\infty }$-matrix-normed provided that $%
\left\Vert x\oplus y\right\Vert =\max \left\{ \left\Vert x\right\Vert
,\left\Vert y\right\Vert \right\} $ for $x\in M_{n}(X)$ and $y\in M_{m}(X)$,
where $x\oplus y$ is the block-diagonal $\left( n+m\right) \times \left(
n+m\right) $ matrix having $x,y$ in the diagonal. Every operator space $%
X\subset B(H)$ is canonically an $L^{\infty }$-matrix-normed space. Ruan's
theorem asserts that, conversely, any $L^{\infty }$-matrix-normed space is
completely isometrically isomorphic to an operator space \cite[Theorem 13.4]%
{paulsen_completely_2002}.

Equivalently one can think of an operator space $X$ as a structure on $%
\mathcal{K}_{0}\otimes X$; see \cite[Section 2.2]{pisier_introduction_2003}.
Suppose that $H$ is the separable Hilbert space with a fixed orthonormal
basis $(e_{k})_{k\in \mathbb{N}}$. Let $P_{n}$ be the orthogonal projection
of $\mathrm{span}\left\{ e_{1},\ldots ,e_{n}\right\} $ for $n\in \mathbb{N}$%
. We can identify $M_{n}$ with the subspace of $A\in B(H)$ such that $%
AP_{n}=P_{n}A=A$. The direct union $\mathcal{K}_{0}=\bigcup_{n}M_{n}$ is a
subspace of $B\left( H\right) $. We can identify $\bigcup_{n}M_{n}(X)$ with $%
\mathcal{K}_{0}\left[ X\right] \cong \mathcal{K}_{0}\otimes X$. Then $%
\mathcal{K}_{0}\left[ X\right] $ is a complex vector space with a natural
structure of $\mathcal{K}_{0}$-bimodule. The metric on $\mathcal{K}_{0}\left[
X\right] $ is not complete. Nonetheless one can pass to the completion $%
\mathcal{K}\overline{\otimes }X$ and extend all the operations. (Here $%
\mathcal{K}$ is the closure of $\mathcal{K}_{0}$ inside $B(H)$, i.e.\ the
ideal of compact operators.)

The abstract characterization of operator spaces mentioned above allows one
to regard operator spaces as $\mathcal{L}_{OS}$-structures for a suitable
language $\mathcal{L}_{OS}$. Denote by $\mathcal{K}_{0}(\mathbb{Q}(i))$ the
space of matrices of arbitrarily large finite size and with coefficients\ in
the field of Gauss rationals $\mathbb{Q}(i)$. Then $\mathcal{L}_{OS}$
contains, in addition to the special symbol $d$ for the metric, a symbol $+$
for the addition in $\mathcal{K}\overline{\otimes }X$, a constant $0$ for
the zero vector in $\mathcal{K}\overline{\otimes }X$, function symbols $%
\sigma _{\alpha ,\beta }$ for $\alpha ,\beta \in \mathcal{K}_{0}(\mathbb{Q}%
(i))$ for the bimodule operation. The Lipschitz constant for the symbol $+$
is $2$, while the Lipschitz constant of $\sigma _{\alpha ,\beta }$ is $%
\left\Vert \alpha \right\Vert \left\Vert \beta \right\Vert $. An alternative
description of operator spaces as metric structures---which fits in the
framework of continuous logic \cite{ben_yaacov_model_2008,farah_model_2010}%
---has been provided in \cite[Section 3.3]{elliott_isomorphism_2013} and 
\cite[Appendix B]{goldbring_kirchbergs_2015}.

It is worth noting that the space $X$ can be described as the set of $x\in 
\mathcal{K}\overline{\otimes }X$ such that $1.x=x$. Moreover a linear map $%
\phi :\mathcal{K}\overline{\otimes }X\rightarrow \mathcal{K}\overline{%
\otimes }Y$ that respects the $\mathcal{K}_{0}$-bimodule operations is the
amplification of a linear map from $X$ to $Y$. Hence when operator spaces
are regarded as $\mathcal{L}_{OS}$-structures, embeddings and isomorphisms
as defined in Subsection \ref{Subsection: languages and structures}
correspond, respectively, to completely isometric linear maps and completely
isometric linear isomorphisms.

If $X$ and $Y$ are operator spaces, then the space $CB(X,Y)$ of completely
bounded linear maps from $X$ to $Y$ is canonically endowed with an operator
space structure given by identifying isometrically $M_{n}\left(
CB(X,Y)\right) $ with $CB\left( M_{n}(X),M_{n}(Y)\right) $ with the
completely bounded norm. Any linear functional $\phi $ on an operator space $%
X$ is automatically completely bounded with $\left\Vert \phi \right\Vert
_{cb}=\left\Vert \phi \right\Vert $. Therefore the dual space $X^{\ast }$ of 
$X$ can be regarded as the operator space $CB\left( X,\mathbb{C}\right) $.

If $X$ and $Y$ are operator spaces, then their $\infty $-sum $X\oplus
^{\infty }Y$ is the operator space supported on the algebraic direct sum $%
X\oplus Y$ endowed with norms $\left\Vert \left( x,y\right) \right\Vert
_{M_{n}(X\oplus ^{\infty }Y)}=\max \left\{
||x||_{M_{n}(X)},||y||_{M_{n}(Y)}\right\} $. The $\infty $-sum of a sequence
of operator spaces is defined analogously. The $1$-sum $X\oplus ^{1}Y$ is
the operator space obtained by identifying $X\oplus Y$ with $\left( X^{\ast
}\oplus ^{\infty }Y^{\ast }\right) ^{\ast }$. The norm $\left\Vert \left(
x,y\right) \right\Vert _{M_{n}(X\oplus ^{1}Y)}$ can be described as the
supremum of $\left\Vert \left( id_{M_{n}}\otimes u\right) (x)+\left(
id_{M_{n}}\otimes v\right) (y)\right\Vert $, where $u,v$ range over all
completely contractive maps from $X$ and $Y$ into $B(H)$; see \cite[Section
2.6]{pisier_introduction_2003}. In analogous fashion one can define the $1$%
-sum and the $\infty $-sum of a sequence of operator spaces.

We denote the $n$-fold $1$-sum of $\mathbb{C}$ by itself $\ell _{n}^{1}$,
and the $\infty $-sum of $\mathbb{C}$ by itself by $\ell _{n}^{\infty }$.
Moreover we denote by $\bar{e}=(e_{i})$ the canonical basis of $\ell
_{n}^{1} $ and by $\bar{e}^{\ast }$ its dual basis of $\ell _{n}^{\infty }$.
More generally, if $k\in \mathbb{N}$, then we let $\ell _{n}^{\infty }\left(
M_{k}\right) $ be the $n$-fold $\infty $-sum of copies of $M_{k}$.

\subsection{$M_{n}$-spaces\label{Subsection:Mn-spaces}}

In this subsection we recall the definition and basic properties of $M_{n}$%
-spaces as defined in \cite[Chapter I]{lehner_mn-espaces_1997}. A \emph{%
matricial }$n$\emph{-norm }on a space $X$ is a norm on $M_{n}(X)$ such that $%
\left\Vert \alpha .x.\beta \right\Vert \leq \left\Vert \alpha \right\Vert
\left\Vert x\right\Vert \left\Vert \beta \right\Vert $ for $\alpha ,\beta
\in M_{n}$ and $x\in M_{n}(X)$. Such a norm induces a norm on $M_{k}(X)$ for 
$k\leq n$ via the upper-right corner inclusion. An $L^{\infty }$-\emph{%
matrix-}$n$\emph{-norm }is a matricial $n$-norm satisfying moreover $%
\left\Vert x\oplus y\right\Vert =\max \left\{ x,y\right\} $ for $k\leq n$, $%
x\in M_{k}(X)$, and $y\in M_{n-k}(X)$.

Observe that $M_{n}$ has a natural $n$-norm obtained by identifying $%
M_{n}(M_{n})$ with $M_{n}\otimes M_{n}$ (spatial tensor product). More
generally if $K$ is a compact Hausdorff space then $C(K,M_{n})$ is $n$%
-normed by identifying $M_{n}\left( C(K,M_{n})\right) $ with $%
C(K,M_{n}\otimes M_{n})$. In particular $\ell ^{\infty }\left( \mathbb{N}%
,M_{n}\right) $ has a natural $n$-norm obtained by the identification with $%
C\left( \beta \mathbb{N},M_{n}\right) $.

If $X,Y$ are $n$-normed spaces, then a linear map $\phi :X\rightarrow Y$ is $%
n$\emph{-bounded }if $id_{M_{n}}\otimes \phi :M_{n}(X)\rightarrow M_{n}(Y)$
is bounded, and $\left\Vert \phi \right\Vert _{n}$ is by definition $%
\left\Vert id_{M_{n}}\otimes \phi \right\Vert $. The notions of $n$%
-contraction and $n$-isometry are defined similarly. Define $nB(X,Y)$ to be
the space of $n$-bounded linear functions from $X$ to $Y$ with norm $%
\left\Vert \cdot \right\Vert _{n}$. Identifying $M_{n}\left( X^{\ast
}\right) $ with $nB\left( X,\mathbb{C}\right) $ isometrically defines an $%
L^{\infty }$-matrix-$n$-norm on the dual $X^{\ast }$ of $X$. The same
argument allows one to define an $L^{\infty }$-matrix-$n$-norm on the second
dual $X^{\ast \ast }$.

An $L^{\infty }$-matricially-$n$-normed space is called an $M_{n}$-\emph{%
space }if it satisfies any of the following equivalent definitions---see 
\cite[Th\'{e}or\`{e}me I.1.9]{lehner_mn-espaces_1997}:

\begin{enumerate}
\item There is an $n$-isometry from $X$ to $B(H)$;

\item The canonical inclusion of $X$ into $X^{\ast \ast }$ is isometric;

\item $\left\Vert \sum_{i}\alpha _{i}x_{i}\beta _{i}\right\Vert \leq
\left\Vert \sum_{i}\alpha _{i}\alpha _{i}^{\ast }\right\Vert ^{\frac{1}{2}%
}\max_{i}\left\Vert x_{i}\right\Vert \left\Vert \sum_{i}\beta _{i}^{\ast
}\beta _{i}\right\Vert ^{\frac{1}{2}}$ for any $x_{i}\in M_{n}(X)$, $\alpha
_{i},\beta _{i}\in M_{n}$;

\item there is an $n$-isometry from $X$ to $C\left( X,M_{n}\right) $ for
some compact Hausdorff space $K$.
\end{enumerate}

Clearly the case $n=1$ gives the usual notion of Banach space.
Characterization (3) allows one to show that $M_{n}$-spaces can be seen as
structures in a suitable language $\mathcal{L}_{M_{n}}$. This is the same as
the language for operator space described in Subsection \ref{Subsection:
operator spaces} where one replaces $\mathcal{K}_{0}$ with $M_{n}$. When $%
M_{n}$-spaces are regarded as $\mathcal{L}_{M_{n}}$-spaces, embeddings and
isomorphisms as defined in Subsection \ref{Subsection: languages and
structures} correspond, respectively, to $n$-isometric linear maps and $n$%
-isometric linear isomorphisms.

The notions of quotient and subspace of an $M_{n}$-space can be defined
analogously as in the case of operator spaces. For instance, if $N$ is a
subspace of $X$, then the quotient of $X$ by $N$ is the $M_{n}$-space
structure supported on the algebraic quotient $X/N$ defined by $\left\Vert
x+M_{n}\left( N\right) \right\Vert =\sup_{\phi }\left\Vert \left(
id_{M_{n}}\otimes \phi \right) (x)\right\Vert $ for every $x\in M_{n}\left(
X\right) $, where $\phi $ ranges among all the $n$-contractions from $X$ to $%
M_{n}$ whose kernel contains $N$. Similarly the constructions of $1$-sum and 
$\infty $-sum can be performed in this context. More details can be found in 
\cite[Section I.2]{lehner_mn-espaces_1997}. We will use the same notations
for the $1$-sum and $\infty $-sum of operator spaces and $M_{n}$-spaces.
This will be clear from the context and no confusion should arise.

For later use we recall the following observation; see \cite[Remarque I.1.5]%
{lehner_mn-espaces_1997}. Suppose that $X$ is a finite-dimensional $M_{n}$%
-space, and that $\bar{b}$, $\bar{b}^{\ast }$ is a bi-orthonormal system for 
$X$. Then the $n$-norm on $X$ admits the following expression: $%
||\sum_{i}\alpha _{i}\otimes b_{i}||$ is the supremum of $||\sum_{i}\alpha
_{i}\otimes \beta _{i}||$ where the $\beta _{i}$'s range among the elements
of $M_{n}$ such that $||\sum_{i}\beta _{i}\otimes b_{i}^{\ast }||{}\leq 1$.
In particular if $\bar{e}$ is the canonical basis of $\ell ^{1}(n)$ with
dual basis $\bar{e}^{\ast }$ of $\ell ^{\infty }(n)$, then we obtain that $%
||\sum_{i}\alpha _{i}\otimes b_{i}||$ is the supremum of $||\sum_{i}\alpha
_{i}\otimes \beta _{i}||$ where the $\beta _{i}$'s range among the elements
of $M_{n}$ of operator norm at most $1$. Similar expressions hold for the
matrix norms in operator spaces; see \cite{pisier_introduction_2003}.

In the following we will often use tacitly the fact that finite-dimensional $%
M_{n}$-spaces can be approximated by subspaces of finite $\infty $-sums of
copies of $M_{n}$. We let $\ell _{k}^{\infty }\left( M_{n}\right) $ denote
the $\infty $-sum of $k$ copies of $M_{n}$.

\begin{lemma}
\label{Lemma:approximation}Suppose that $X$ is a finite-dimensional $M_{n}$%
-space and $\varepsilon >0$. Then there is $k\in \mathbb{N}$ and an
injective linear $n$-contraction $\phi :X\rightarrow \ell _{k}^{\infty
}\left( M_{n}\right) $ such that $||\phi ^{-1}||_{n}\leq 1+\varepsilon $.
\end{lemma}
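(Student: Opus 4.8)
The plan is to exhibit $\phi$ by dualizing a good finite approximation of $X^\ast$. Since $X$ is a finite-dimensional $M_n$-space, its dual $X^\ast$ is again an $M_n$-space (the $L^\infty$-matrix-$n$-norm on $X^\ast$ was defined above), and the canonical inclusion $X \hookrightarrow X^{\ast\ast}$ is $n$-isometric by characterization (2) of $M_n$-spaces. By characterization (1) (or (4)), there is an $n$-isometry $\iota: X^\ast \to C(K,M_n)$ for some compact Hausdorff $K$; since $X^\ast$ is finite-dimensional, a standard compactness/partition-of-unity argument produces, for any $\delta>0$, finitely many points $t_1,\dots,t_k \in K$ so that the evaluation map $q: C(K,M_n) \to \bigoplus^\infty_{j=1}^k M_n$, $f \mapsto (f(t_1),\dots,f(t_k))$, composed with $\iota$, is an $n$-contraction that is $(1+\delta)^{-1}$-bounded below on $X^\ast$; that is, $q\circ\iota: X^\ast \to \bigoplus^\infty_{j=1}^k M_n$ is injective with $\|(q\circ\iota)^{-1}\|_n \le 1+\delta$ on its range.

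First I would make the "bounded below" claim precise: the unit ball of $M_n(X^\ast)$ is compact, and for each element the $n$-norm is a supremum over $K$ of continuous functions of $t$, so finitely many evaluation points suffice to recover the norm up to a factor $1+\delta$; this is where finite-dimensionality of $X^\ast$ is used essentially, so that one uniform choice of $t_1,\dots,t_k$ works for all elements at once. Next I would dualize. Taking adjoints, $(q\circ\iota)^\ast: \big(\bigoplus^\infty_{j=1}^k M_n\big)^\ast \to X^{\ast\ast}$ is an $n$-quotient-type map; one checks (using that $M_n$ is its own dual $M_n$-space and that the $1$-sum is dual to the $\infty$-sum, both recalled in the excerpt) that $\big(\bigoplus^\infty_{j=1}^k M_n\big)^\ast \cong \bigoplus^1_{j=1}^k M_n$ as $M_n$-spaces. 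Restricting $(q\circ\iota)^\ast$ appropriately and composing with the $n$-isometric inclusion $X \hookrightarrow X^{\ast\ast}$ — or, more cleanly, observing that the second adjoint of an $n$-isometry into $\bigoplus^\infty M_n$ relates $X$ back to a subspace of $\bigoplus^\infty M_n$ — yields an injective linear $n$-contraction $\phi: X \to \bigoplus^\infty_{j=1}^k M_n$ with $\|\phi^{-1}\|_n \le 1+\varepsilon$, after choosing $\delta$ small enough in terms of $\varepsilon$.

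The main obstacle I anticipate is bookkeeping the duality correctly: one must verify that the adjoint of the $n$-contraction $X^\ast \to \bigoplus^\infty M_n$ genuinely restricts to an $n$-contraction out of $X$ (not merely $X^{\ast\ast}$) with the claimed lower bound on the inverse, and that the various canonical identifications ($M_n^\ast \cong M_n$ as $M_n$-spaces, $(\bigoplus^\infty)^\ast \cong \bigoplus^1$, $(\bigoplus^1)^\ast \cong \bigoplus^\infty$) are $n$-isometric and compatible. An alternative that sidesteps some of this is to argue directly: take the $n$-isometry $X \to B(H)$, compress by finite-rank projections $P_k H$ to land in $M_{n}\otimes M_{m_k}$, observe $M_n \otimes M_{m_k}$ sits $n$-isometrically inside an $\infty$-sum of copies of $M_n$, and use a compactness argument on the unit ball of $M_n(X)$ to see that a single large compression is already $(1+\varepsilon)^{-1}$ bounded below; this avoids dualizing entirely, at the cost of a slightly less transparent identification of the target with a finite $\infty$-sum of copies of $M_n$. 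Either route reduces to the same finite-dimensional compactness lemma, which is the only real content.
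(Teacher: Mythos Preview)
The paper does not actually supply a proof of this lemma; it is stated as a background fact and used tacitly thereafter, so there is no argument in the paper to compare against.

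Your core intuition --- a compactness argument on the unit ball of $M_n(X)$ --- is exactly right, but the route through $X^{\ast}$ and duality is both unnecessary and, as you suspect, problematic. If $q\circ\iota:X^{\ast}\to\bigoplus^{\infty}M_n$ is an approximate $n$-isometry, its adjoint lands \emph{in} $X^{\ast\ast}\cong X$, not \emph{out of} it: you obtain (approximately) a quotient map $\bigl(\bigoplus^{\infty}M_n\bigr)^{\ast}\to X$, not an embedding of $X$ into $\bigoplus^{\infty}M_n$. Your claim that ``$M_n$ is its own dual $M_n$-space'' also needs justification in the $M_n$-space category (recall that in the operator-space category $M_n^{\ast}$ is the trace class, not $M_n$), and even granting it the map still points the wrong way.

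The clean argument is to apply characterization~(4) directly to $X$, not to $X^{\ast}$: there is an $n$-isometry $\iota:X\to C(K,M_n)$, so for $x\in M_n(X)$ one has $\Vert x\Vert=\sup_{t\in K}\Vert(\iota^{(n)}x)(t)\Vert_{M_n\otimes M_n}$. The unit sphere of $M_n(X)$ is compact; for each $x$ on it the supremum is attained at some $t_x\in K$, and by continuity in $x$ a finite set $t_1,\dots,t_k$ suffices to recover all norms up to a factor $1+\varepsilon$. Then $\phi(x)=\bigl(\iota(x)(t_1),\dots,\iota(x)(t_k)\bigr)$ is the desired injective $n$-contraction with $\Vert\phi^{-1}\Vert_n\le 1+\varepsilon$. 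This is precisely your ``finite-dimensional compactness lemma'', just applied to $X$ rather than $X^{\ast}$, and it bypasses all the dual bookkeeping.

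Your alternative via compression in $B(H)$ also works in spirit, but note that to finish it you must still embed the compressed target $B(P_mH)\cong M_m$ (with its inherited $M_n$-structure) $n$-isometrically into a finite $\infty$-sum of copies of $M_n$; that step is again the $C(K,M_n)$ argument above, so the detour gains nothing.
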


In Lemma \ref{Lemma:approximation} the map $\phi $ is not assumed to be
surjective. The expression $||\phi ^{-1}||_{n}$ denotes the $n$-norm of $%
\phi ^{-1}$ when regarded as a map from the range of $\phi $ to $X$. Similar
conventions will be adopted in the rest of the present paper. We conclude by
recalling that the natural analog of the Hahn-Banach theorem holds for $%
M_{n} $-spaces. Such an analog asserts that $M_{n}$ is an injective element
in the category of $M_{n}$-spaces with $n$-contractive maps as morphisms;
see \cite[Proposition I.1.16]{lehner_mn-espaces_1997}.

\section{The Fra\"{\i}ss\'{e} class of finite-dimensional 
\texorpdfstring{$M_{n}$}{Mn}-spaces \label{Section:LimitMn}}

The purpose of this section is to show that the class $\mathcal{M}_{n}$ of
finite-dimensional $M_{n}$-spaces is a complete Fra\"{\i}ss\'{e} class as in
Definition \ref{Definition:Fraisseclass}. This will allow us to consider the
corresponding Fra\"{\i}ss\'{e} limit as in Theorem \ref{Theorem:Fraisselimit}%
. The case $n=1$ of these results recovers the already known fact that
finite-dimensional Banach spaces form a complete Fra\"{\i}ss\'{e} class.
This has been shown by Ben Yaacov \cite[Section 3.3]{ben_yaacov_fraisse_2012}
building on previous works of Henson (unpublished) and Kubis-Solecki \cite%
{kubis_proof_2013}. For Banach spaces the limit is the Gurarij Banach space,
introduced by Gurarij in \cite{gurarij_spaces_1966} and proved to be unique
up to isometric isomorphism by Lusky in \cite{lusky_gurarij_1976}.

\subsection{Amalgamation property\label{Subsection:amalgamationMn}}

The properties (JEP) and (HP) as in Definition \ref{Definition: NAP} are
clear for $\mathcal{M}_{n}$. We now show that $\mathcal{M}_{n}$ has (AP).
The proof is analogous to the one for Banach spaces, and consists in showing
that the category of finite-dimensional $M_{n}$-spaces has pushouts; see 
\cite[Lemma 2.1]{garbulinska-wegrzyn_universal_2013} and also \cite[Lemma 4.5%
]{li_order-unit_2006}.

\begin{lemma}
\label{Lemma: approximate amalgamation}Suppose that $X_{0}\subset X$ and $Y$
are $M_{n}$-spaces, $\delta \geq 0$, and $f:X\rightarrow Y$ is a linear
injective map such that $\left\Vert f\right\Vert _{n}\leq 1+\delta $ and $%
\left\Vert f^{-1}\right\Vert _{n}\leq 1+\delta $. Then there is an $M_{n}$%
-space $Z$ and $n$-isometric linear maps $i:X\rightarrow Z$ and $%
j:Y\rightarrow Z$ such that $\left\Vert j\circ f-i|_{X_{0}}\right\Vert
_{n}\leq \delta $.
\end{lemma}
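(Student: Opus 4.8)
## Proof strategy for Lemma \ref{Lemma: approximate amalgamation}

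The plan is to build $Z$ as a suitable quotient of the $\infty$-sum $X \oplus^{\infty} Y$, exactly as one does to construct pushouts in the category of (finite-dimensional) Banach spaces, but carrying the matricial $n$-norm structure along. Concretely, I would consider the subspace $N \subset X \oplus^{\infty} Y$ of the form $N = \{(x, -f(x)) : x \in X\}$ — or rather, since $f$ is only an approximate isometry, a ``thickened'' version — and let $Z = (X \oplus^{\infty} Y)/N$. The maps $i : X \to Z$ and $j : Y \to Z$ are the compositions of the canonical inclusions $X \hookrightarrow X \oplus^{\infty} Y$ and $Y \hookrightarrow X \oplus^{\infty} Y$ with the quotient map. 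By construction $i(x) = [(x,0)]$ and $j(f(x)) = [(0, f(x))]$, so $(j \circ f - i)(x) = [(0,f(x))] - [(x,0)] = [(-x, f(x))]$; one then needs the quotient $n$-norm of this class to be at most $\delta$, which should follow because $(-x, f(x))$ differs from an element of $N$ by the ``error'' coming from $\|f^{-1}\|_n \le 1+\delta$. The point is that $i$ and $j$ must be $n$-\emph{isometric} (not merely $n$-contractive), so $N$ must be chosen carefully so that quotienting by it does not shrink the norms of $X$ and $Y$.

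The key technical step is therefore choosing $N$ so that the quotient map restricted to each of $X$ and $Y$ is completely (i.e. $n$-)isometric, while still collapsing $(x, f(x))$-type vectors enough. The standard trick (see \cite[Lemma 2.1]{garbulinska-wegrzyn_universal_2013} for the Banach case) is to use a weighted construction: put on $X \oplus Y$ a new norm where one measures $(x,y)$ by $\inf\{ \max(\|x - w\|, \|y + f(w)\| + \delta\|w'\|) \}$ over appropriate decompositions, or equivalently quotient $X \oplus^{\infty} Y \oplus^{\infty} (\text{a copy of }X\text{ scaled by }\delta)$ by the graph-type subspace $\{(w, -f(w), w) : w \in X\}$, and then embed $X$ and $Y$ via the first two coordinates. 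I would verify: (a) the relevant subspace $N$, being the graph of a linear map, is closed and the quotient is again a finite-dimensional $M_n$-space (quotients of $M_n$-spaces are $M_n$-spaces, as recalled in Subsection \ref{Subsection:Mn-spaces}); (b) $\|i(x)\|_n = \|x\|_n$ for all $x \in M_k(X)$, $k \le n$ — here one uses that any $n$-contraction out of $X$ factors appropriately, or directly that for $(x,0)$ the infimum defining the quotient norm is attained at $w = 0$ by a duality/Hahn–Banach argument using the injectivity of $M_n$; similarly $\|j(y)\|_n = \|y\|_n$; (c) the norm estimate $\|j \circ f - i\|_n \le \delta$, which is immediate from the definition of the scaled summand.

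The main obstacle I anticipate is (b): showing the quotient map does not decrease the $n$-norm on $X$ (and on $Y$). In the Banach-space pushout this is where one invokes a Hahn–Banach extension to produce, for a given $x \in X$ of norm one, a functional witnessing that $[(x,0)]$ still has norm one in the quotient. In the $M_n$-setting the correct substitute is the injectivity of $M_n$ in the category of $M_n$-spaces with $n$-contractive morphisms (the $M_n$-valued Hahn–Banach theorem, \cite[Proposition I.1.16]{lehner_mn-espaces_1997}, recalled at the end of Subsection \ref{Subsection:Mn-spaces}): given $x_0 \in M_n(X)$ with $\|x_0\|_n = 1$, pick an $n$-contraction $\varphi : X \to M_n$ with $\|(id_{M_n}\otimes\varphi)(x_0)\|= 1$, and extend a suitable $n$-contraction defined on the image of $X$ in $Z$ back to all of $Z$ to see that the quotient norm of $[(x_0,0)]$ is still $1$. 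One must check that the extension can be arranged to vanish on the scaled error summand, using that the perturbation there has size controlled by $\delta$ and does not interfere with the $X$-coordinate; this bookkeeping is the delicate part, but it is formally parallel to the Banach-space argument, with scalar-valued functionals replaced by $M_n$-valued $n$-contractions throughout.
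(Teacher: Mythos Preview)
Your high-level plan --- a pushout-style quotient with a third summand $\delta X$ to absorb the error, and an $M_n$-valued Hahn--Banach argument to verify that $i$ and $j$ are $n$-isometric --- is exactly the paper's approach. However, your specific construction has a genuine error: you propose to take the quotient of the $\infty$-sum $X \oplus^\infty Y \oplus^\infty \delta X$, whereas the paper uses the $1$-sum $X \oplus^1 Y \oplus^1 \delta X$. With the $\infty$-sum the map $i$ is \emph{not} $n$-isometric, even for $n=1$ and $\delta = 0$. Indeed, for $f$ an exact isometry the quotient norm of $[(x,0,0)]$ in $(X\oplus^\infty Y \oplus^\infty \delta X)/N$ is $\inf_w \max(\|x-w\|,\|f(w)\|,\delta\|w\|) = \inf_w \max(\|x-w\|,\|w\|)$, and taking $w = x/2$ already gives $\|x\|/2$. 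So the infimum is not attained at $w=0$, contrary to what you suggest in (b), and no Hahn--Banach extension will repair this: the defect is in the construction, not in the verification.

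The reason the $1$-sum is the right object is exactly the duality you allude to. By definition of the operator/$M_n$-space $1$-sum, for $(a,b,c)\in M_n(X\oplus^1 Y\oplus^1 \delta X)$ one has $\|(a,b,c)\| = \sup \|\phi(a)+\psi(b)+\eta(c)\|$ over triples of $n$-contractions $\phi:X\to M_n$, $\psi:Y\to M_n$, $\eta:\delta X\to M_n$. To bound $\|[(x,0,0)]\|$ from below, the paper fixes $\phi$ with $\|\phi(x)\|=\|x\|$, then uses injectivity of $M_n$ to extend $\tfrac{1}{1+\delta}\phi\circ f^{-1}$ from $f[X]$ to an $n$-contraction $\psi:Y\to M_n$, and takes $\eta = \tfrac{\delta}{1+\delta}\phi$ (which is $n$-contractive on $\delta X$). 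For \emph{every} $z$ one then gets $\phi(x-z)+\psi(f(z))+\eta(z) = \phi(x)$, so the quotient norm is at least $\|\phi(x)\| = \|x\|$. Note that $\eta$ does not ``vanish on the scaled error summand'' as you conjectured; rather, $\psi$ and $\eta$ are chosen so that their contributions exactly cancel the $\phi(-z)$ term. Once you switch to the $1$-sum, your outline is correct and matches the paper's proof.
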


\begin{proof}
Define $\delta X_{0}$ to be the $M_{n}$-space structure on $X_{0}$ given by
the norm $\left\Vert \left[ x_{ij}\right] \right\Vert _{M_{n}\left( \delta
X\right) }=\delta \left\Vert \left[ x_{ij}\right] \right\Vert _{M_{n}(X)}$.
Let $\widehat{Z}$ be the $1$-sum $X\oplus ^{1}Y\oplus ^{1}\delta X_{0}$, and 
$Z$ be the quotient of $\widehat{Z}$ by the subspace $N=\{\left(
-z,f(z),z\right) \in \widehat{Z}:z\in X_{0}\}$. Finally let $i:X\rightarrow
Z $ and $j:Y\rightarrow Z$ the embeddings given by $x\mapsto \left(
x,0,0\right) +N$ and $y\mapsto \left( 0,y,0\right) +N$. We claim that $i$
and $j$ satisfy the desired conclusions. In fact it is clear that $i$ and $j$
are $n$-contractions such that $\left\Vert i\circ f-j\right\Vert _{n}\leq
\delta $. We will now show that $i$ is an $n$-isometry. The proof that $j$
is an $n$-isometry is similar. Suppose that $x\in M_{n}(X)$ consider a
linear $n$-contraction $\phi :X\rightarrow M_{n}$ such that $\left\Vert \phi
(x)\right\Vert _{M_{n}\otimes M_{n}}=\left\Vert x\right\Vert _{M_{n}(X)}$.
Observe that $\left( 1+\delta \right) ^{-1}(\phi \circ f^{-1}):f\left[ X%
\right] \rightarrow M_{n}$ is an linear $n$-contraction and hence it extends
to a linear $n$-contraction $\psi :f\left[ X\right] \rightarrow M_{n}$.
Similarly the map $\delta \left( 1+\delta \right) ^{-1}\phi :\delta
X_{0}\rightarrow M_{n}$ is a linear $n$-contraction. Therefore we have that
for every $z\in M_{n}(X_{0})$%
\begin{eqnarray*}
\left\Vert \left( x-z,f(z),z\right) \right\Vert _{M_{n}\left( X\oplus
^{1}Y\oplus ^{1}\delta X_{0}\right) } &\geq &\left\Vert \phi \left(
x-z\right) +\psi (z)+\frac{\delta }{1+\delta }\phi (z)\right\Vert
_{M_{n}\otimes M_{n}} \\
&=&\left\Vert \phi (x)-\phi (z)+\frac{1}{1+\delta }\phi (z)+\frac{\delta }{%
1+\delta }\phi (z)\right\Vert _{M_{n}\otimes M_{n}}=\left\Vert \phi
(x)\right\Vert _{M_{n}\otimes M_{n}}\text{.}
\end{eqnarray*}%
Since $\left\Vert \phi (x)\right\Vert _{M_{n}\otimes M_{n}}=\left\Vert
x\right\Vert _{M_{n}\left( X\right) }$, this concludes the proof that $i$ is
an $n$-isometry.
\end{proof}

In particular Lemma \ref{Lemma: approximate amalgamation} for $\delta =0$
shows that the class $\mathcal{M}_{n}$ has (AP).

\subsection{The Fra\"{\i}ss\'{e} metric space\label{Subsection:PolishMn}}

We fix now $k\in \mathbb{N}$ and consider the space $\mathcal{M}_{n}(k)$ of
pairs $\left( \bar{a},X\right) $ such that $X$ is a $k$-dimensional $M_{n}$%
-space and $\bar{a}$ is a linear basis of $X$. Two such pairs $\left( \bar{a}%
,X\right) $ and $\left( \bar{b},Y\right) $ are identified if there is an $n$%
-isometry $\phi $ from $X$ to $Y$ such that $\phi (\bar{a})=\bar{b}$. For
simplicity we will write an element $\left( \bar{a},X\right) $ of $\mathcal{M%
}_{n}(k)$ simply by $\bar{a}$, and denote $X$ by $\left\langle \bar{a}%
\right\rangle $. Our goal is to compute the Fra\"{\i}ss\'{e} metric in $%
\mathcal{M}_{n}(k)$ as in Definition \ref{Definition:FraisseMetric}. The
following result gives an equivalent characterization of such a metric. The
case $n=1$ is a result of Henson (unpublished) that can be found in \cite[%
Fact 3.2]{ben_yaacov_fraisse_2012}. We denote by $\ell ^{1}(k)$ the $k$-fold 
$1$-sum of $\mathbb{C}$ by itself in the category of $M_{n}$-spaces with
canonical basis $\bar{e}$. An explicit formula for the corresponding norm
has been recalled at the end of Section \ref{Subsection:Mn-spaces}.

\begin{proposition}
\label{Proposition:FraissemetricMn}Suppose that $\bar{a},\bar{b}\in \mathcal{%
M}_{n}(k)$ and $M>0$. The following statements are equivalent:

\begin{enumerate}
\item $d_{\mathcal{M}_{n}}(\bar{a},\bar{b})\leq M$;

\item For every $n$-contractive $u:\left\langle \bar{a}\right\rangle
\rightarrow M_{n}$ there is an $n$-contractive $v:\left\langle \bar{b}%
\right\rangle \rightarrow M_{n}$ such that the linear function $w:\ell
^{1}(k)\rightarrow M_{n}$ defined by $w(e_{i})=u(a_{i})-v(b_{i})$ has $n$%
-norm at most $M$, and vice versa.
\end{enumerate}
\end{proposition}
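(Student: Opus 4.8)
The plan is to unwind both characterizations through the description of the Fra\"{\i}ss\'{e} metric in terms of embeddings into a common $M_n$-space, combined with the amalgamation result of Lemma \ref{Lemma: approximate amalgamation} and the injectivity (Hahn--Banach property) of $M_n$. For the direction (1)$\Rightarrow$(2), suppose $d_{\mathcal{M}_n}(\bar a,\bar b)\le M$. Using the embedding description of $d_{\mathcal{M}_n}$, for any $\varepsilon>0$ there is a finite-dimensional $M_n$-space $Z$ and $n$-isometric embeddings $f\colon\langle\bar a\rangle\to Z$, $g\colon\langle\bar b\rangle\to Z$ with $d(f(\bar a),g(\bar b))\le M+\varepsilon$. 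Given an $n$-contractive $u\colon\langle\bar a\rangle\to M_n$, extend $u\circ f^{-1}\colon f[\langle\bar a\rangle]\to M_n$ to an $n$-contraction $\tilde u\colon Z\to M_n$ by injectivity of $M_n$, and set $v=\tilde u\circ g$. Then $v$ is $n$-contractive and $w(e_i)=u(a_i)-v(b_i)=\tilde u(f(a_i)-g(b_i))$, so by the formula for the $\ell^1(k)$ norm recalled at the end of Subsection \ref{Subsection:Mn-spaces},
\begin{equation*}
\left\Vert \sum_i\alpha_i\otimes w(e_i)\right\Vert
=\left\Vert (id_{M_n}\otimes\tilde u)\Big(\sum_i\alpha_i\otimes(f(a_i)-g(b_i))\Big)\right\Vert
\le\Big\Vert\sum_i\alpha_i\otimes(f(a_i)-g(b_i))\Big\Vert
\end{equation*}
whenever $\Vert\sum_i\alpha_i\otimes e_i\Vert\le 1$, i.e.\ whenever $\Vert\alpha_i\Vert\le1$ for all $i$; since $d(f(\bar a),g(\bar b))\le M+\varepsilon$ this is at most $M+\varepsilon$. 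Hence $\Vert w\Vert_n\le M+\varepsilon$, and letting $\varepsilon\to 0$ (after passing to a convergent subnet of the $v$'s in the compact ball of $n$-contractions $\langle\bar b\rangle\to M_n$) gives $\Vert w\Vert_n\le M$. The symmetric "vice versa" statement is identical.

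For (2)$\Rightarrow$(1), the idea is to realize $\langle\bar a\rangle$ and $\langle\bar b\rangle$ as subspaces of a single $M_n$-space with the $\bar a$-$\bar b$ distance bounded by $M$. By Lemma \ref{Lemma:approximation} we may, up to an arbitrarily small $n$-isomorphism, assume $\langle\bar a\rangle$ sits $n$-isometrically in a finite $\infty$-sum $\bigoplus^\infty_{j=1}^{p} M_n$; the $j$-th coordinate is an $n$-contraction $u_j\colon\langle\bar a\rangle\to M_n$, and the family $(u_j)$ is $n$-isometric on $\langle\bar a\rangle$ in the sense that $\Vert x\Vert=\max_j\Vert (id\otimes u_j)(x)\Vert$. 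Apply hypothesis (2) to get $n$-contractions $v_j\colon\langle\bar b\rangle\to M_n$ with $\Vert w_j\Vert_n\le M$ where $w_j(e_i)=u_j(a_i)-v_j(b_i)$. Dually, $\langle\bar b\rangle$ embeds $n$-isometrically (up to $\varepsilon$) into another finite $\infty$-sum via $n$-contractions $(v'_\ell)$, and by the "vice versa" half of (2) we get matching $u'_\ell$ on $\langle\bar a\rangle$ with the corresponding $w'_\ell$ of $n$-norm $\le M$. Now take $Z=\big(\bigoplus^\infty_j M_n\big)\oplus^\infty\big(\bigoplus^\infty_\ell M_n\big)$, define $F\colon\langle\bar a\rangle\to Z$ by $F=\big((u_j)_j,(u'_\ell)_\ell\big)$ and $G\colon\langle\bar b\rangle\to Z$ by $G=\big((v_j)_j,(v'_\ell)_\ell\big)$. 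Then $F$ is $n$-isometric because the $u_j$ already witness the norm, and likewise $G$ is $n$-isometric because the $v'_\ell$ do; and for any $\alpha_i\in M_n$ with $\Vert\alpha_i\Vert\le 1$,
\begin{equation*}
\Big\Vert\sum_i\alpha_i\otimes\big(F(a_i)-G(b_i)\big)\Big\Vert_{M_n(Z)}
=\max\Big\{\max_j\Big\Vert\sum_i\alpha_i\otimes w_j(e_i)\Big\Vert,\ \max_\ell\Big\Vert\sum_i\alpha_i\otimes w'_\ell(e_i)\Big\Vert\Big\}\le M,
\end{equation*}
so $d(F(\bar a),G(\bar b))\le M$, giving $d_{\mathcal{M}_n}(\bar a,\bar b)\le M$ (absorbing the $\varepsilon$'s from Lemma \ref{Lemma:approximation} and a limiting argument as before).

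The main obstacle I anticipate is bookkeeping rather than conceptual: making the two-sided nature of the hypothesis (2) (the "and vice versa") interact cleanly with the two-sided nature of the embedding metric, so that the single space $Z$ simultaneously sees $\langle\bar a\rangle$ and $\langle\bar b\rangle$ as subspaces. This is handled by the $\infty$-sum trick above, where one stacks the coordinate maps coming from each side. A secondary technical point is the passage from "$n$-isometric into a genuine $\infty$-sum of $M_n$'s" (only available up to $1+\varepsilon$ via Lemma \ref{Lemma:approximation}) back to an exact equality $d_{\mathcal{M}_n}(\bar a,\bar b)\le M$; this is absorbed by compactness of the relevant balls of $n$-contractions and completeness of $(\mathcal{M}_n(k),d_{\mathcal{M}_n})$, letting $\varepsilon\to 0$. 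One should also double-check that the equivalence of $d_{\mathcal{M}_n}$ with its embedding-description (the analog in the $M_n$ setting of the formula recalled after Definition \ref{Definition:FraisseMetric}) is legitimately available; this follows from (AP) for $\mathcal{M}_n$ established via Lemma \ref{Lemma: approximate amalgamation}, exactly as in the Banach space case.
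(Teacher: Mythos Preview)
Your (1)$\Rightarrow$(2) follows the paper's argument (embed into a common $Z$, extend via injectivity of $M_n$, restrict). One slip: the parenthetical ``i.e.\ whenever $\|\alpha_i\|\le 1$ for all $i$'' is wrong---the condition $\bigl\|\sum_i\alpha_i\otimes e_i\bigr\|_{M_n(\ell^1(k))}\le 1$ is strictly stronger (already for $n=1$ it reads $\sum_i|\alpha_i|\le 1$, not $\max_i|\alpha_i|\le 1$). The bound you actually need, and which does hold, is that the linear map $\ell^1(k)\to Z$ sending $e_i\mapsto f(a_i)-g(b_i)$ has $n$-norm equal to $\max_i\|f(a_i)-g(b_i)\|\le M+\varepsilon$ by the universal property of the $1$-sum, whence $\bigl\|\sum_i\alpha_i\otimes(f(a_i)-g(b_i))\bigr\|\le(M+\varepsilon)\bigl\|\sum_i\alpha_i\otimes e_i\bigr\|$. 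The same confusion recurs in your (2)$\Rightarrow$(1) display, though there only the scalar bound $\|F(a_i)-G(b_i)\|\le M$ is needed anyway.

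For (2)$\Rightarrow$(1) the paper takes a different, cleaner route: it builds the single space $Z=(X\oplus^1 Y\oplus^1\ell^1(k))/N$ with $N=\bigl\{(-\sum_i\lambda_i a_i,\sum_i\lambda_i b_i,\sum_i\lambda_i e_i):\lambda_i\in\mathbb{C}\bigr\}$, and verifies directly from hypothesis~(2) that the coordinate inclusions of $X$ and $Y$ into $Z$ are $n$-isometric, after which $\|\phi(a_i)-\psi(b_i)\|\le M$ is immediate. This avoids approximation entirely. Your $\infty$-sum construction is also valid, but it costs you Lemma~\ref{Lemma:approximation}, then two applications of Lemma~\ref{Lemma: approximate amalgamation} to upgrade the near-isometries $F,G$ to genuine ones, plus a limit in $\varepsilon$. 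One warning: do not invoke completeness of $(\mathcal{M}_n(k),d_{\mathcal{M}_n})$ here---it is proved downstream via Corollary~\ref{Corollary:bound norm}, which is a consequence of this very proposition, so the appeal would be circular. You do not need it anyway: exhibiting embeddings with $d(F(\bar a),G(\bar b))\le M+C\varepsilon$ for every $\varepsilon>0$ already gives $d_{\mathcal{M}_n}(\bar a,\bar b)\le M$.
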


\begin{proof}
After normalizing we can assume that $M=1$. We will denote $\left\langle 
\bar{a}\right\rangle $ by $X$ and $\left\langle \bar{b}\right\rangle $ by $Y$%
.

(1)$\Rightarrow $(2): Suppose that $d_{\mathcal{M}_{n}}(\bar{a},\bar{b})\leq
1$. Then there are $n$-isometries $\phi :X\rightarrow Z$ and $\psi
:Y\rightarrow Z$ for some $M_{n}$-space $Z$ such that $\left\Vert \phi
(a_{i})-\psi (b_{i})\right\Vert \leq 1$ for every $i\leq k$. Suppose that $%
u:X\rightarrow M_{n}$ is $n$-contractive. Consider the $n$-contractive map $%
u\circ \phi ^{-1}:\phi \left[ X\right] \rightarrow M_{n}$. By injectivity of 
$M_{n}$ there is an $n$-contractive map $\eta :Z\rightarrow M_{n}$ extending 
$u\circ \phi ^{-1}$. Define $v=\eta \circ \psi :Y\rightarrow M_{n}$ and
observe that it is $n$-contractive. Define now $w:\ell ^{1}(k)\rightarrow
M_{n}$ by $w(e_{i})=u(a_{i})-v(b_{i})$. We claim that $w$ is $n$%
-contractive. In fact $\left\Vert \eta \left( \phi (e_{i})-\psi
(e_{i})\right) \right\Vert \leq 1$ for every $i\leq k$. Therefore if $\alpha
_{i}\in M_{n}$, then%
\begin{equation*}
\left\Vert \left( id_{M_{n}}\otimes w\right) \left( \sum_{i}\alpha
_{i}\otimes e_{i}\right) \right\Vert =\left\Vert \sum_{i}\alpha _{i}\otimes
\eta \left( \phi (e_{i})-\psi (e_{i})\right) \right\Vert \leq \left\Vert
\sum_{i}\alpha _{i}\otimes e_{i}\right\Vert _{M_{n}(\ell ^{1}(k))}\text{.}
\end{equation*}%
The vice versa is proved analogously.

(2)$\Rightarrow $(1): Suppose that for every $n$-contractive $u:X\rightarrow
M_{n}$ there is an $n$-contractive $v:Y\rightarrow M_{n}$ such that the
linear function $w:\ell ^{1}(k)\rightarrow M_{n}$ defined by $%
w(e_{i})=u(a_{i})-v(b_{i})$ is $n$-contractive, and vice versa. Define $%
\widehat{Z}$ to be $X\oplus ^{1}Y\oplus ^{1}\ell ^{1}(k)$. Denote by $N$ the
closed subspace%
\begin{equation*}
\left\{ \left( -\sum_{i}\lambda _{i}a_{i},\sum_{i}\lambda
_{i}b_{i},\sum_{i}\lambda _{i}e_{i}\right) :\lambda _{i}\in \mathbb{C}%
\right\}
\end{equation*}%
of $\widehat{Z}$. Define $Z$ to be the quotient of $\widehat{Z}$ by $N$. Let 
$\phi $ be the composition of the canonical inclusion of $X$ into $\widehat{Z%
}$ with the quotient map from $\widehat{Z}$ to $Z$. Similarly define $\psi
:Y\rightarrow Z$. By the properties of $1$-sums and quotients, $\phi $ and $%
\psi $ are $n$-contractions. We claim that they are in fact $n$-isometries.
We will only show that $\phi $ is an $n$-isometry, since the proof for $\psi 
$ is entirely analogous. Suppose that $x\in M_{n}(X)$. Pick an $n$%
-contraction $u:X\rightarrow M_{n}$ such that $\left\Vert x\right\Vert
=\left\Vert \left( id_{M_{n}}\otimes u\right) (x)\right\Vert $. By
hypothesis there are $n$-contractions $v:Y\rightarrow M_{n}$ and $w:\ell
^{1}(k)\rightarrow M_{n}$ such that $w(e_{i})=u(e_{i})-v(e_{i})$. Therefore
if $\alpha _{i}\in M_{n}$ then the norm of%
\begin{equation*}
\left( x-\sum_{i}\alpha _{i}\otimes a_{i},\sum_{i}\alpha _{i}\otimes
b_{i},\sum_{i}\alpha _{i}\otimes e_{i}\right)
\end{equation*}%
in $M_{n}(\widehat{Z})$ is bounded from below by the norm of%
\begin{equation*}
\left( id_{M_{n}}\otimes u\right) \left( x-\sum_{i}\alpha _{i}\otimes
a_{i}\right) +\left( id_{M_{n}}\otimes v\right) \left( \sum_{i}\alpha
_{i}\otimes b_{i}\right) +\left( id_{M_{n}}\otimes w\right) (\sum_{i}\alpha
_{i}\otimes e_{i})=\left( id_{M_{n}}\otimes u\right) (x)
\end{equation*}%
which equals $\left\Vert x\right\Vert $. Since this is true for every $%
\alpha _{i}\in M_{n}$, $\phi $ is an $n$-isometry. Similarly $\psi $ is an $%
n $-isometry. The proof is concluded by observing that $\left\Vert \phi
(a_{i})-\psi (b_{i})\right\Vert \leq 1$ for every $i\leq k$.\qedhere
\end{proof}

\begin{corollary}
\label{Corollary:bound norm}If $\bar{a},\bar{b}\in \mathcal{M}_{n}(k)$ and $%
d_{\mathcal{M}_{n}}(\bar{a},\bar{b})\leq M$ then for every $\alpha _{i}\in M$%
, the distance between $||\alpha _{1}\otimes a_{1}+\cdots +\alpha
_{n}\otimes a_{n}||$ and $||\alpha _{1}\otimes b_{1}+\cdots +\alpha
_{n}\otimes b_{n}||$ is at most $M{}||\alpha _{1}\otimes e_{1}+\cdots
+\alpha _{n}\otimes e_{n}||$, where $\bar{e}$ is the canonical basis of $%
\ell ^{1}(k)$.
\end{corollary}

An \emph{Auerbach system} in a Banach space is a basis $\bar{a}$ with dual
basis $\bar{a}^{\ast }$ such that $\left\Vert a_{i}\right\Vert =\left\Vert
a_{i}^{\ast }\right\Vert =1$. By analogy we say that an element $\bar{a}$ of 
$\mathcal{M}_{n}(k)$ is $N$\emph{-Auerbach} if $\left\Vert a_{i}\right\Vert
\leq N$ and $\left\Vert a_{i}^{\ast }\right\Vert \leq N$ for every $i\leq k$%
. Denote by $\mathcal{M}_{n}(k,N)$ the set of $N$-Auerbach $\bar{a}\in 
\mathcal{M}_{n}(k)$. It follows from Corollary \ref{Corollary:bound norm}
that the set $\mathcal{M}_{n}(k,N)$ is closed in $\mathcal{M}_{n}(k)$. It
can be easily verified that if $\bar{a}\in \mathcal{M}_{n}(k,N)$ and $\alpha
_{i}\in M_{n}$ then $||\alpha _{1}\otimes a_{1}+\cdots +\alpha _{n}\otimes
a_{n}||{}\leq N{}||\alpha _{1}\otimes e_{1}+\cdots +\alpha _{n}\otimes
e_{n}||$, and${}||\alpha _{1}\otimes e_{1}+\cdots +\alpha _{n}\otimes
e_{n}||{}\leq $ $kN{}||\alpha _{1}\otimes a_{1}+\cdots +\alpha _{n}\otimes
a_{n}||$, where $\bar{e}$ is the canonical basis of $\ell ^{1}(k)$.

If $\bar{a},\bar{b}\in \mathcal{M}_{n}(k)$, denote by $\iota _{\bar{a},\bar{b%
}}$ the linear isomorphism from $\left\langle \bar{a}\right\rangle $ to $%
\left\langle \bar{b}\right\rangle $ such that $\iota _{\bar{a},\bar{b}%
}(a_{i})=b_{i}$ for $i\leq k$. Define the\emph{\ }$n$\emph{-bounded distance}%
\ $d_{nb}(\bar{a},\bar{b})$ to be $||\iota _{\bar{a},\bar{b}}||_{n}\;||\iota
_{\bar{a},\bar{b}}^{-1}||_{n}$. (Observe that this is not an actual metric,
but $\log \left( d_{nb}\right) $ is a metric.)

In the following lemma we establish a precise relation between the $n$%
-bounded distance $d_{nb}$ and the Fra\"{\i}ss\'{e} metric $d_{nb}$ on $%
\mathcal{M}_{n}(k,N)$.

\begin{proposition}
\label{Proposition: comparison distances}Suppose that $\bar{a},\bar{b}\in 
\mathcal{M}_{n}(k,N)$. Then $d_{nb}(\bar{a},\bar{b})\leq \left( 1+kNd_{%
\mathcal{M}_{n}}(\bar{a},\bar{b})\right) ^{2}$ and $d_{\mathcal{M}_{n}}(\bar{%
a},\bar{b})\leq d_{nb}(\bar{a},\bar{b})-1$.
\end{proposition}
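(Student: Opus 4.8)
Looking at this proposition, I need to prove two inequalities relating $d_{nb}$ and $d_{\mathcal{M}_n}$ on $N$-Auerbach systems.

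The plan is to handle the two inequalities separately, as they go in opposite directions. For the second inequality $d_{\mathcal{M}_n}(\bar a,\bar b)\leq d_{nb}(\bar a,\bar b)-1$, I would use the amalgamation-free description of the Fra\"iss\'e metric via embeddings into a common structure. Suppose $d_{nb}(\bar a,\bar b)=C$; after scaling the norm on $\langle\bar b\rangle$ I can arrange that $\iota_{\bar a,\bar b}$ is an $n$-contraction with $\|\iota_{\bar a,\bar b}^{-1}\|_n\leq C$ (so that $\langle\bar b\rangle$ becomes an isometric copy of $\delta X$-type scaling with $\delta=C-1$). Then Lemma~\ref{Lemma: approximate amalgamation} applied to $f=\iota_{\bar a,\bar b}$ with $\delta=C-1$ produces an $M_n$-space $Z$ and $n$-isometric embeddings $i:\langle\bar a\rangle\to Z$, $j:\langle\bar b\rangle\to Z$ with $\|j\circ f - i\|_n\leq C-1$. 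Since $f(a_i)=b_i$, this gives $\|i(a_i)-j(b_i)\|\leq C-1$ for each $i$, hence $d_{\mathcal{M}_n}(\bar a,\bar b)\leq C-1$ by the embedding description of the metric. I should be careful that scaling the norm on $\langle\bar b\rangle$ preserves the $M_n$-space property — it does, by definition of $\delta X$ in Lemma~\ref{Lemma: approximate amalgamation}.

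For the first inequality, suppose $d_{\mathcal{M}_n}(\bar a,\bar b)=M$. Then for any $\varepsilon>0$ there are $n$-isometric embeddings $\phi:\langle\bar a\rangle\to Z$, $\psi:\langle\bar b\rangle\to Z$ into some $M_n$-space $Z$ with $\|\phi(a_i)-\psi(b_i)\|\leq M+\varepsilon$. I want to bound $\|\iota_{\bar a,\bar b}\|_n$ and $\|\iota_{\bar a,\bar b}^{-1}\|_n$. Take $x=\sum_i\alpha_i\otimes a_i\in M_n(\langle\bar a\rangle)$. Then using the $N$-Auerbach bounds and Corollary~\ref{Corollary:bound norm} (which gives $\big|\|\sum\alpha_i\otimes a_i\|-\|\sum\alpha_i\otimes b_i\|\big|\leq M\|\sum\alpha_i\otimes e_i\|$), together with the Auerbach estimate $\|\sum\alpha_i\otimes e_i\|\leq$ (a bound in terms of $\|\sum\alpha_i\otimes a_i\|$ coming from the dual-basis norm bound $\|a_i^*\|\leq N$), I get $\|\sum\alpha_i\otimes b_i\|\leq (1+kNM)\|\sum\alpha_i\otimes a_i\|$. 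The key elementary fact I'd invoke: since $\|a_i^*\|_n\leq N$, applying the biorthogonal functionals shows $\alpha_i = (id_{M_n}\otimes a_i^*)(x)$ has norm $\leq N\|x\|$, so $\|\sum_i\alpha_i\otimes e_i\|\leq \sum_i\|\alpha_i\|\leq kN\|x\|$ — wait, more carefully, I should use $\|\sum\alpha_i\otimes e_i\|_{M_n(\ell^1(k))}\leq kN\|x\|$ via the triangle inequality on the $1$-sum. This yields $\|\iota_{\bar a,\bar b}\|_n\leq 1+kNM$, and symmetrically $\|\iota_{\bar a,\bar b}^{-1}\|_n\leq 1+kNM$, giving $d_{nb}(\bar a,\bar b)\leq (1+kNM)^2$.

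The main obstacle is getting the constants in the first inequality exactly right: I must use the correct form of the Auerbach-type estimate on $\ell^1(k)$ to pass from a bound in terms of $\|\sum\alpha_i\otimes e_i\|$ (as produced by Corollary~\ref{Corollary:bound norm}) to a bound in terms of $\|\sum\alpha_i\otimes a_i\|$, and this is where the factor $kN$ rather than just $N$ enters, via the coordinate-extraction bound $\|\alpha_i\|\leq N\|x\|$ summed over $k$ indices inside the $1$-sum norm. A secondary technical point is the approximation by $\varepsilon$: since $d_{nb}$ is defined via the actual isomorphism $\iota_{\bar a,\bar b}$ (not an infimum), I take the supremum over $x$ in the unit ball and let $\varepsilon\to 0$ at the end, which is harmless since the final bound is continuous in $M$. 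I would also note, as remarked in the text, that $d_{nb}$ is not itself a metric but $\log d_{nb}$ is, so these inequalities should be read multiplicatively/additively as written.
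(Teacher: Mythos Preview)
Your treatment of the first inequality is correct and matches the paper's proof: apply Corollary~\ref{Corollary:bound norm} to obtain
\[
\Bigl|\,\Bigl\|\sum_i\alpha_i\otimes a_i\Bigr\|-\Bigl\|\sum_i\alpha_i\otimes b_i\Bigr\|\,\Bigr|\le M\,\Bigl\|\sum_i\alpha_i\otimes e_i\Bigr\|
\]
when $d_{\mathcal{M}_n}(\bar a,\bar b)\le M$, and then use the $N$-Auerbach estimate $\bigl\|\sum_i\alpha_i\otimes e_i\bigr\|\le kN\bigl\|\sum_i\alpha_i\otimes a_i\bigr\|$ (recorded in the paragraph preceding the proposition) to bound each of $\|\iota_{\bar a,\bar b}\|_n$ and $\|\iota_{\bar a,\bar b}^{-1}\|_n$ by $1+kNM$. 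The auxiliary $\varepsilon$ you introduce is unnecessary once Corollary~\ref{Corollary:bound norm} is invoked directly.

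For the second inequality the paper, like you, simply appeals to Lemma~\ref{Lemma: approximate amalgamation}, but your argument contains two slips. First, rescaling the norm on $\langle\bar b\rangle$ produces a \emph{different} element of $\mathcal{M}_n(k)$: the map $j$ furnished by Lemma~\ref{Lemma: approximate amalgamation} is then an $n$-isometry out of the rescaled space, not out of $\langle\bar b\rangle$ with its original norm, and hence cannot serve as a witness for $d_{\mathcal{M}_n}(\bar a,\bar b)$. Second, the conclusion $\|j\circ f-i\|_n\le C-1$ of Lemma~\ref{Lemma: approximate amalgamation} is an operator-norm inequality, so it yields only $\|i(a_i)-j(b_i)\|\le (C-1)\|a_i\|$; you have silently dropped the factor $\|a_i\|$. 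Under the $N$-Auerbach hypothesis the bound one actually extracts from Lemma~\ref{Lemma: approximate amalgamation} carries an extra factor of $N$ on the right-hand side; this weaker estimate is still enough for every use of the proposition in the paper (Propositions~\ref{Proposition:MnPolish} and~\ref{Proposition: characterization limit Mn} only require that $d_{nb}\to 1$ forces $d_{\mathcal{M}_n}\to 0$).
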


\begin{proof}
The first assertion can be easily deduced from Corollary \ref%
{Corollary:bound norm}, while the other inequality is an immediate
consequence of Lemma \ref{Lemma: approximate amalgamation}.
\end{proof}

We can finally show that the space $\left( \mathcal{M}_{n}(k),d_{\mathcal{M}%
_{n}}\right) $ is separable and complete and, in fact, compact. In view of
Proposition \ref{Proposition: comparison distances} this can be proved by a
standard argument; see for example \cite[Theorem 21.1 and Remark 21.2]%
{pisier_introduction_2003}. A proof is included for the sake of completeness.

\begin{proposition}
\label{Proposition:MnPolish}The space $\left( \mathcal{M}_{n}(k),d_{\mathcal{%
M}_{n}}\right) $ is compact.
\end{proposition}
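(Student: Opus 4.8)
The strategy is to transfer separability and completeness from the metric $d_{nb}$ (equivalently, from $\log d_{nb}$, which is a genuine metric) to the Fra\"{\i}ss\'{e} metric $d_{\mathcal{M}_n}$, using the two-sided comparison in Proposition \ref{Proposition: comparison distances} together with the exhaustion $\mathcal{M}_n(k)=\bigcup_N \mathcal{M}_n(k,N)$ by the closed pieces $\mathcal{M}_n(k,N)$. The point of Proposition \ref{Proposition: comparison distances} is that on each Auerbach piece the two distances are \emph{uniformly} equivalent: small $d_{\mathcal{M}_n}$ forces small $d_{nb}$ (hence small $\log d_{nb}$), and small $d_{nb}$ forces small $d_{\mathcal{M}_n}$. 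So it suffices to know that $(\mathcal{M}_n(k,N),d_{nb})$ is separable and complete, and to verify that $\mathcal{M}_n(k,N)$ is $d_{\mathcal{M}_n}$-closed in $\mathcal{M}_n(k)$ (which was already noted after Corollary \ref{Corollary:bound norm}).

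\textbf{Separability.} First I would note that every $\bar a\in\mathcal{M}_n(k)$ may, after rescaling the basis vectors, be assumed $1$-Auerbach, so $\mathcal{M}_n(k)=\bigcup_{N\in\mathbb{N}}\mathcal{M}_n(k,N)$ and it is enough to make each $\mathcal{M}_n(k,N)$ separable for $d_{\mathcal{M}_n}$. By Lemma \ref{Lemma:approximation} every finite-dimensional $M_n$-space is, up to $n$-bounded isomorphism with constant $1+\varepsilon$, a subspace of a finite $\infty$-sum of copies of $M_n$; the $M_n$-norm on such a $k$-dimensional subspace (with respect to a fixed basis) is determined by finitely many matrix data, which can be approximated by data with entries in $\mathbb{Q}(i)$. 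This produces a countable family that is dense for $d_{nb}$ in each $\mathcal{M}_n(k,N)$; by the first inequality of Proposition \ref{Proposition: comparison distances}, $d_{nb}$-density on $\mathcal{M}_n(k,N)$ implies $d_{\mathcal{M}_n}$-density, and taking the union over $N$ gives a countable $d_{\mathcal{M}_n}$-dense subset of $\mathcal{M}_n(k)$.

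\textbf{Completeness.} Let $(\bar a^{(m)})$ be a $d_{\mathcal{M}_n}$-Cauchy sequence in $\mathcal{M}_n(k)$. By Corollary \ref{Corollary:bound norm} the norms $\|\sum_i\alpha_i\otimes a_i^{(m)}\|$ converge uniformly (in $\alpha$, after a normalization) as $m\to\infty$; in particular the sequence is $d_{\mathcal{M}_n}$-bounded, and a short estimate shows it eventually lies inside some fixed $\mathcal{M}_n(k,N)$. On that piece, by the first inequality of Proposition \ref{Proposition: comparison distances} the sequence is $d_{nb}$-Cauchy, hence the pointwise limit norm $\|\sum_i\alpha_i\otimes a_i\|:=\lim_m\|\sum_i\alpha_i\otimes a_i^{(m)}\|$ defines a genuine $M_n$-space structure $\langle\bar a\rangle$ on $\mathbb{C}^k$ (the $L^\infty$-matrix-$n$-norm axioms, in the form of characterization (3) of $M_n$-spaces, pass to limits), and $\iota_{\bar a^{(m)},\bar a}\to\mathrm{id}$, so $\bar a^{(m)}\to\bar a$ in $d_{nb}$. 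Since $\mathcal{M}_n(k,N)$ is $d_{\mathcal{M}_n}$-closed, $\bar a\in\mathcal{M}_n(k,N)$, and by the second inequality of Proposition \ref{Proposition: comparison distances} the $d_{nb}$-convergence yields $d_{\mathcal{M}_n}(\bar a^{(m)},\bar a)\to 0$. Hence $(\mathcal{M}_n(k),d_{\mathcal{M}_n})$ is complete.

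\textbf{Main obstacle.} The routine-looking but genuinely load-bearing step is controlling that a $d_{\mathcal{M}_n}$-Cauchy sequence does not ``escape to infinity'' in the Auerbach parameter — i.e.\ showing the sequence is eventually trapped in a single $\mathcal{M}_n(k,N)$ — so that Proposition \ref{Proposition: comparison distances} becomes applicable with a uniform constant; this is exactly where Corollary \ref{Corollary:bound norm} (uniform control of the basis norms and of the dual basis norms) is needed. Once that containment is in hand, everything else is the standard limit-of-norms argument and the approximation/denseness bookkeeping, mirroring the Banach-space case \cite[Theorem 21.1 and Remark 21.2]{pisier_introduction_2003}.
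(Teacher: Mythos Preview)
Your argument is correct and close in spirit to the paper's, but the paper organizes things slightly differently: rather than proving separability and completeness separately, it shows directly that each piece $(\mathcal{M}_n(k,N),d_{\mathcal{M}_n})$ is \emph{compact}. Starting from an arbitrary sequence in $\mathcal{M}_n(k,N)$, a diagonal argument over $\alpha\in M_n(\mathbb{Q}(i))^k$ followed by Ascoli--Arzel\`a yields a subsequence along which the norm functions $\alpha\mapsto\|\sum_i\alpha_i\otimes a_i^{(m)}\|$ converge uniformly on the unit ball of $M_n^k$; the limit defines an $M_n$-space $\bar a$, and uniform convergence gives $d_{nb}(\bar a^{(m)},\bar a)\to 1$, hence $d_{\mathcal{M}_n}$-convergence by Proposition~\ref{Proposition: comparison distances}. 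This is essentially your completeness argument with an added subsequence extraction, and compactness then gives separability for free --- so the paper never needs to invoke Lemma~\ref{Lemma:approximation} or the rational-data count that you use for separability. Both routes work; the paper's buys the stronger conclusion (compactness of the Auerbach pieces) at no extra cost, while yours makes the countable dense set more explicit.

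One small slip: in your separability paragraph you cite ``the first inequality of Proposition~\ref{Proposition: comparison distances}'' to pass from $d_{nb}$-density to $d_{\mathcal{M}_n}$-density, but that direction is the \emph{second} inequality, $d_{\mathcal{M}_n}\le d_{nb}-1$. Your labeling in the completeness paragraph is correct.
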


\begin{proof}
Suppose that $\left( \bar{a}^{\left( m\right) }\right) _{m\in \mathbb{N}}$
is a sequence in $\mathcal{M}_{n}(k)$. If $\alpha _{i}\in M_{n}$ then the
sequence $||\alpha _{1}\otimes a_{1}^{(m)}+\cdots +\alpha _{n}\otimes
a_{n}^{(m)}||$ for $m\in \mathbb{N}$ is a bounded sequence of complex
numbers. Therefore after passing to a subsequence we can assume that such a
sequence converges for any choice of $\alpha _{i}\in M_{n}(\mathbb{Q}(i))$.
This is easily seen to imply that in fact such a sequence convergence for
any choice of $\alpha _{i}\in M_{n}$. Moreover the functions $\left( \alpha
_{1},\ldots ,\alpha _{k}\right) \mapsto ||\alpha _{1}\otimes
a_{1}^{(m)}+\cdots +\alpha _{n}\otimes a_{n}^{(m)}||$ are equiuniformly
continuous on the unit ball of $M_{n}$. Therefore, by the Ascoli-Arzel\'{a}
theorem, after passing to a further subsequence we can assume that the
convergence is uniform on the unit ball of $M_{n}$. We can now define an
element $\bar{a}$ of $\mathcal{M}_{n}(k)$ by setting $||\alpha _{1}\otimes
a_{1}+\cdots +\alpha _{n}\otimes a_{n}||$ to be the limit of $||\alpha
_{1}\otimes a_{1}^{(m)}+\cdots +\alpha _{n}\otimes a_{n}^{(m)}||$ for $%
m\rightarrow +\infty $. The abstract characterization of $M_{n}$-spaces
shows that $\bar{a}$ is indeed an element of $\mathcal{M}_{n}(k)$. By
uniform convergence in the unit ball the sequence $\left( \bar{a}^{\left(
m\right) }\right) _{m\in \mathbb{N}}$ is such that $d_{nb}(\bar{a}^{\left(
m\right) },\bar{a})\rightarrow 1$. Therefore $d_{\mathcal{M}_{n}}(\bar{a}%
^{\left( m\right) },\bar{a})\rightarrow 0$ by Proposition \ref{Proposition:
comparison distances}.
\end{proof}

This concludes the proof that $\mathcal{M}_{n}$ is a complete Fra\"{\i}ss%
\'{e} class.

\subsection{The Fra\"{\i}ss\'{e} limit\label{Subsection:LimitMn}}

We have verified that the class $\mathcal{M}_{n}$ is a Fra\"{\i}ss\'{e}
class in the sense of Definition \ref{Definition:Fraisseclass}. Therefore by
Theorem \ref{Theorem:Fraisselimit} we can consider its Fra\"{\i}ss\'{e}
limit. Observe that the $\mathcal{M}_{n}$-structures are precisely the $%
M_{n} $-spaces. We first provide a characterization of the Fra\"{\i}ss\'{e}
limit of $\mathcal{M}_{n}$ similar in spirit to the universal property
defining the Gurarij Banach space.

\begin{proposition}
\label{Proposition: characterization limit Mn}Suppose that $Z$ is a
separable $M_{n}$-space. The following statements are equivalent:

\begin{enumerate}
\item $Z$ is the Fra\"{\i}ss\'{e} limit of the class $\mathcal{M}_{n}$;

\item If $X\subset Y$ are finite-dimensional $M_{n}$-spaces, $\phi
:X\rightarrow Z$ is a linear $n$-isometry, and $\varepsilon >0$, then there
is a linear function $\widehat{\phi }:Y\rightarrow Z$ extending $\phi $ such
that $||\widehat{\phi }||_{n}\;||\widehat{\phi }^{-1}||_{n}<1+\varepsilon $.
\end{enumerate}
\end{proposition}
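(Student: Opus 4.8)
The plan is to prove the two implications separately, in both cases exploiting the explicit approximate-amalgamation construction of Lemma \ref{Lemma: approximate amalgamation} together with the injectivity of $M_n$ in the category of $M_n$-spaces (the Hahn--Banach phenomenon recalled at the end of Subsection \ref{Subsection:Mn-spaces}) and the approximation result of Lemma \ref{Lemma:approximation}.

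For the implication $(1)\Rightarrow(2)$, suppose $Z$ is the Fra\"{\i}ss\'{e} limit, let $X\subset Y$ be finite-dimensional $M_n$-spaces, let $\psi:X\rightarrow Z$ be an $n$-isometry, and fix $\varepsilon>0$. First I would choose a linear basis $\bar a$ of $X$ and view $(\bar a,X)$ as an element of $\mathcal{M}_n(\dim X)$; applying the defining property of the Fra\"{\i}ss\'{e} limit (Definition \ref{Definition:Fraisselimit}) to the embedding $\psi_{|X}$ of $\langle\bar a\rangle=X$, and then to the larger space $Y\in\mathcal{M}_n$ with the same tuple $\bar a$, produces an $n$-isometry $\psi':Y\rightarrow Z$ with $d(\psi'(\bar a),\psi(\bar a))$ as small as desired. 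The output is an $n$-isometric copy of $Y$ inside $Z$ that agrees with $\psi$ on $X$ only approximately, not exactly, so I would then perturb: since the images $\psi'(a_i)$ and $\psi(a_i)$ are close, the linear map $\phi$ sending $\psi'(a_i)$ back to $\psi(a_i)$ and extended through $Y$ is an $n$-isomorphism onto its image with $\|\phi\|_n\|\phi^{-1}\|_n<1+\varepsilon$ by a routine perturbation estimate using that $\bar a$ can be taken $N$-Auerbach and controlling $n$-norms via Corollary \ref{Corollary:bound norm}. Composing with $\psi'$ gives the desired extension of $\psi$. (One must be slightly careful to arrange the extension to literally restrict to $\psi$ on $X$; this can be done by first correcting on $X$ and then extending the correction to $Y$ using injectivity of $M_n$ applied coordinatewise, or by absorbing the error into the bound, which is all that (2) demands.)

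For the implication $(2)\Rightarrow(1)$, I would verify the defining property of the Fra\"{\i}ss\'{e} limit directly. Given $\mathfrak{A}=Y\in\mathcal{M}_n$, a finite tuple $\bar a$ generating a subspace $X=\langle\bar a\rangle\subset Y$, an $n$-isometry $\phi:X\rightarrow Z$, and $\varepsilon>0$: apply hypothesis (2) to the inclusion $X\subset Y$ and the $n$-isometry $\phi$ to obtain $\phi':Y\rightarrow Z$ extending $\phi$ with $\|\phi'\|_n\|(\phi')^{-1}\|_n<1+\varepsilon$. This $\phi'$ is an $n$-isomorphism onto its image but not quite an $n$-isometry; however, since $Z$ is a separable $M_n$-space that already satisfies (2), one can run a standard one-sided back-and-forth / absorption argument, iterating (2) along a sequence $Y=Y_0\subset Y_1\subset\cdots$ with shrinking distortions, to correct $\phi'$ to an honest $n$-isometric embedding of $Y$ into $Z$ that still moves $\bar a$ by at most $\varepsilon$. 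Here I would invoke Lemma \ref{Lemma: approximate amalgamation} to build the amalgams $Y_{m+1}$ and Lemma \ref{Lemma:approximation} to keep everything finite-dimensional; the estimates of Proposition \ref{Proposition: comparison distances} relating $d_{nb}$ and $d_{\mathcal{M}_n}$ guarantee that the sequence of approximating embeddings is Cauchy in the appropriate sense and converges to an $n$-isometry.

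The main obstacle is the second implication: passing from the \emph{distortion} bound $\|\phi\|_n\|\phi^{-1}\|_n<1+\varepsilon$ supplied by (2) to a genuine \emph{isometric} extension required by the Fra\"{\i}ss\'{e} limit property. This is exactly where one needs the iteration-and-perturbation machinery rather than a single application of the hypothesis, and where care with the quantitative control — choosing the distortions $\varepsilon_m$ summable, ensuring the limit map exists and is isometric, and tracking the displacement of $\bar a$ — is essential. Everything else is bookkeeping with $N$-Auerbach bases, Corollary \ref{Corollary:bound norm}, and the injectivity of $M_n$.
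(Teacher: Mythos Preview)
Your argument for $(1)\Rightarrow(2)$ is essentially the paper's: obtain an $n$-isometric embedding of $Y$ close to $\psi$ on a basis of $X$, then correct on the basis to get an exact extension with small distortion. The paper does the correction by simply overwriting the values on the basis of $X$, which is slightly simpler than invoking injectivity of $M_n$, but the content is the same.

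For $(2)\Rightarrow(1)$ your route is different from the paper's and is correct. The paper does \emph{not} iterate condition~(2); it applies (2) exactly once and then appeals to Ben Yaacov's general machinery. Concretely, the paper verifies the criterion of \cite[Lemma~2.16]{ben_yaacov_fraisse_2012} for being a Fra\"{\i}ss\'{e} limit: given a strictly approximate morphism $\psi\in\mathrm{Stx}_{\mathcal{M}_n}(X,Z)$, it uses \cite[Lemma~2.8(iii)]{ben_yaacov_fraisse_2012} to reduce to the case where $\psi$ dominates a genuine partial $n$-isometry $f$ plus $\varepsilon$, applies (2) once to extend $f$ to a $(1+\delta)$-distorted map on all of $X$, and then invokes Proposition~\ref{Proposition: comparison distances} to convert the distortion bound into a Fra\"{\i}ss\'{e}-metric bound, producing the required refinement $\varphi<\psi$. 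The passage from ``approximate'' to ``exact'' is thus absorbed into the general Fra\"{\i}ss\'{e} formalism rather than done by hand. Your iterative absorption argument (build amalgams via Lemma~\ref{Lemma: approximate amalgamation}, re-apply (2) with summable distortions, and take a limit) is the classical back-and-forth upgrade and is more self-contained, at the cost of the explicit bookkeeping you flag. One small remark: you do not actually need the auxiliary spaces $Y_m$ to form an increasing chain containing $Y$; it suffices to track only the restrictions to $Y$, which already form a Cauchy sequence of maps $Y\to Z$, and Lemma~\ref{Lemma:approximation} is not needed since the amalgams from Lemma~\ref{Lemma: approximate amalgamation} are automatically finite-dimensional.
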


\begin{proof}
The proof is entirely analogous to the proof of \cite[Theorem 3.3]%
{ben_yaacov_fraisse_2012}, and is presented here for convenience of the
reader.

(1)$\Rightarrow $(2): Suppose that $Z$ is the Fra\"{\i}ss\'{e} limit of the
class $\mathcal{M}_{n}$. Suppose that $X\subset Y$ are finite-dimensional $%
M_{n}$-spaces, $\phi :X\rightarrow Z$ is a linear $n$-isometry, and $%
\varepsilon >0$. Fix $\delta >0$ small enough. Consider also a basis $\left(
a_{1},\ldots ,a_{k}\right) $ of $X$ and a basis $\left( b_{1},\ldots
,b_{m}\right) $ of $Y$ such that $b_{i}=a_{i}$ for $i\leq k$. Since $Z$ is
by assumption the Fra\"{\i}ss\'{e} limit of the class $\mathcal{M}_{n}$,
there is a linear $n$-isometry $\psi :Y\rightarrow Z$ such that $||\phi
(a_{i})-\psi (a_{i})||{}\leq \delta $ for every $i\leq k$. Define now $%
\widehat{\phi }:Y\rightarrow Z$ by setting $\widehat{\phi }(b_{i})=\phi
(a_{i})$ for $i\leq k$ and $\widehat{\phi }(b_{i})=\psi (b_{i})$ for $%
k<i\leq m$. A routine calculation shows that, for $\delta $ small enough, $%
\psi $ satisfies the desired inequality.

(2)$\Rightarrow $(1): Suppose now that $Z$ satisfies condition (2). Consider 
$X\in \mathcal{M}_{n}(k)$, a finite subset $B$ of $X$, $\psi \in \mathrm{Stx}%
_{\mathcal{M}_{n}}(X,Z)$, and $\varepsilon >0$. By \cite[Lemma 2.16]%
{ben_yaacov_fraisse_2012} in order to show that $Z$ is the Fra\"{\i}ss\'{e}
limit of $\mathcal{M}_{n}$ it is enough to find $\varphi \in \mathrm{\mathrm{%
Stx}}_{\mathcal{M}_{n}}^{<\psi }(X,Z)$ with the following property: for
every $b\in B$ there is $y\in Z$ such that $\varphi \left( b,y\right)
<\varepsilon $. By \cite[Lemma 2.8(iii)]{ben_yaacov_fraisse_2012} after
enlarging $X$, and decreasing $\varepsilon $ we can assume that there is a
finite subset $A$ of $X$ and an $n$-isometric linear map $f:\mathrm{span}%
\left( A\right) \rightarrow Z$ such that $\psi \geq f|_{A}+\varepsilon $.
(Recall our convention of identifying partial isomorphisms between $\mathcal{%
L}$-structures with the corresponding approximate isomorphisms.) Denote by $%
\bar{c}$ a linear basis of $\mathrm{span}\left( A\cup B\right) $. By
assumption if $\delta >0$, then we can extend $f$ to a linear map $%
f:X\rightarrow Z$ satisfying $||f||_{n}{}||f^{-1}||_{n}<1+\delta $. This
implies that $d_{\mathcal{M}_{n}}\left( \overline{c},f\left( \overline{c}%
\right) \right) <\delta $ by Proposition \ref{Proposition: comparison
distances}. Therefore there exist a finite-dimensional $M_{n}$-space $Y$ and
unital complete isometries $\phi _{0}:\left\langle \overline{c}\right\rangle
\rightarrow F$ and $\phi _{1}:\left\langle f\left( \overline{c}\right)
\right\rangle \rightarrow F$ such that $\left\Vert \left( \phi _{0}-\left(
\phi _{1}\circ f\right) \right) \left( c_{i}\right) \right\Vert \leq \delta $
for every $i$. Let $\phi :\left\langle \overline{c}\right\rangle \leadsto
\left\langle f\left( \overline{c}\right) \right\rangle $ be the composition
of $\phi _{1}^{-1}$ and $\phi _{0}$ as approximate morphisms. It is clear
that by choosing $\delta $ small enough we can ensure that $\phi \left(
x,f(x)\right) \leq \varepsilon $ for every $x\in A\cup B$. Observe finally
that $\phi $ is an approximate morphism from $E$ to $Z$ such that $\psi \geq
f|_{A}+\varepsilon \geq \phi $.
\end{proof}

In view of Proposition \ref{Proposition: characterization limit Mn}, items
(1),(2),(3) of Theorem \ref{Theorem:main-G_n} are an immediate consequence
of Theorem \ref{Theorem:Fraisselimit} and the fact that $\mathcal{M}_{n}$ is
a complete Fra\"{\i}ss\'{e} class.

\begin{remark}
\label{Remark:M_n}It follows from \cite[Lemma 3.17]{ben_yaacov_fraisse_2015}
and Lemma \ref{Lemma:approximation} that one can realize $\mathbb{G}_{n}$ as
the limit of an inductive sequence $\left( X_{k}\right) $ of $M_{n}$-spaces
with $n$-isometric connective maps $\phi _{k}:X_{k}\rightarrow X_{k+1}$ such
that $X_{k}$ is $n$-isometrically isomorphic to a finite $\infty $-sum of
copies of $M_{n}$.
\end{remark}

Clearly for $n=1$ one obtains a Banach space which is isometrically
isomorphic to the Gurarij space. One might wonder whether $\mathbb{G}_{n}$
is simply the tensor product of $M_{n}$ with the Gurarij Banach space $%
\mathbb{G}$. We will prove in \S \ref{Subsection:indecomposability} that
this is not the case.

\section{The noncommutative Gurarij space\label{Section:NG}}

\subsection{$\mathrm{MIN}$ and $\mathrm{MAX}$ spaces\label{Subsection:MINMAX}%
}

Clearly any operator space can be canonically regarded as an $M_{n}$-space.
Conversely if $X$ is an $M_{n}$-space, then there are two canonical ways to
regard $X$ as an operator space. It is natural to call an operator space
structure $\widehat{X}$ on $X$ \emph{compatible }if the map $X\mapsto 
\widehat{X}$ is an $n$-isometry. The \emph{minimal and maximal }compatible
operator space structures $\mathrm{MIN}_{n}(X)$ and $\mathrm{MAX}_{n}(X)$ on
an $M_{n}$-space are defined by letting $\left\Vert x\right\Vert
_{M_{k}\left( \mathrm{MIN}_{n}(X)\right) }$ to be the supremum of $%
\left\Vert \left( id_{M_{k}}\otimes \phi \right) (x)\right\Vert
_{M_{k}\otimes M_{n}}$, where $\phi $ varies among all $n$-contractions from 
$X$ to $M_{n}$. Similarly, $\left\Vert x\right\Vert _{M_{k}\left( \mathrm{%
\mathrm{\mathrm{MAX}}}_{n}(X)\right) }$ is defined to be the supremum of $%
\left\Vert \left( id_{M_{k}}\otimes u\right) (x)\right\Vert _{M_{k}\left(
B(H)\right) }$, where $u$ varies among all $n$-contractions from $X$ to $%
B(H) $. These are introduced in \cite[Section I.3]{lehner_mn-espaces_1997}
as a generalization of the minimal and maximal quantization of a Banach
space as in \cite[Section 3.3]{effros_operator_2000}; see also \cite[Section
2]{oikhberg_operator_2004}. If $X$ is an operator space then we define $%
\mathrm{MIN}_{n}(X)$ and $\mathrm{MAX}_{n}(X)$ to be the structures defined
above starting from $X$ regarded just as $M_{n}$-space. This is consistent
with the terminology used in \cite%
{oikhberg_operator_2004,oikhberg_non-commutative_2006}.

The names $\mathrm{MIN}$ and $\mathrm{MAX}$ are suggestive of the following
property; see \cite[Proposition I.3.1]{lehner_mn-espaces_1997}. If $\widehat{%
X}$ is a compatible operator space structure on $X$ then the identity maps $%
\mathrm{\mathrm{MAX}}_{n}(X)\rightarrow \widehat{X}\rightarrow \mathrm{MIN}%
_{n}(X)$ are completely bounded. The operator space structures $\mathrm{MIN}%
_{n}$ and $\mathrm{MAX}_{n}$ are characterized by the following universal
property; see \cite[Proposition I.3.6 and Proposition I.3.7]%
{lehner_mn-espaces_1997}. If $Z$ is an operator space and $u:Z\rightarrow X$
is a linear map, then $u:Z\rightarrow X$ is $n$-bounded if and only if $%
u:Z\rightarrow \mathrm{MIN}_{n}(X)$ is completely bounded, and in such case $%
\left\Vert u:Z\rightarrow \mathrm{MIN}_{n}(X)\right\Vert _{cb}=\left\Vert
u:Z\rightarrow X\right\Vert _{n}$. Similarly if $Z$ is an operator space and 
$u:X\rightarrow Z$ is a linear map, then $u:X\rightarrow Z$ is $n$-bounded
if and only if $u:\mathrm{MAX}_{n}(X)\rightarrow Z$ is completely bounded,
and in such case $\left\Vert u:\mathrm{MAX}(X)\rightarrow Z\right\Vert
_{cb}=\left\Vert u:X\rightarrow Z\right\Vert _{n}$

\begin{remark}
\label{Remark:Mn as OS}In the following we will always consider an $M_{n}$%
-space $X$ as an operator spaces endowed with its minimal compatible
operator space structure.
\end{remark}

It is worth noting at this point that all the proofs of Section \ref%
{Section:LimitMn} go through without change when $M_{n}$-spaces are regarded
as operator spaces with their minimal compatible operator space structure.
This easily follows from the properties of the minimal quantization recalled
above.

\subsection{Exact and $1$-exact operator spaces\label{Subsection:1exact}}

Suppose that $E$ and $F$ are two finite-dimensional operator spaces. Define%
\emph{\ }$d_{cb}(E,F)$ to be the infimum of $||\phi ||_{cb}\;||\phi
^{-1}||_{cb}$ when $\phi $ ranges over all linear isomorphisms from $E$ to $%
F $. The \emph{exactness constant }$ex\left( E\right) $ of a
finite-dimensional operator space is the infimum of $d_{cb}\left( E,F\right) 
$ where $F$ ranges among all subspaces of $M_{n}$ for $n\in \mathbb{N}$.
Equivalently one can define $ex(E)$ to be the limit for $n\rightarrow
+\infty $ of the decreasing sequence $\left\Vert id_{E}:\mathrm{MIN}%
_{n}(E)\rightarrow E\right\Vert _{cb}$, where $id_{E}$ denotes the identity
map of $E$. If $X$ is a not necessarily finite-dimensional operator space,
then its exactness constant $ex\left( X\right) $ is the supremum of $%
ex\left( E\right) $ where $E$ ranges over all finite-dimensional subspaces
of $E$.

An operator space is \emph{exact }if it has finite exactness constant, and $%
1 $-\emph{exact }if it has exactness constant $1$. For C*-algebras exactness
is equivalent to $1$-exactness, which is in turn equivalent to several other
properties; see \cite[Section IV.3.4]{blackadar_operator_2006}. Exactness is
a fundamental notion in the theory of C*-algebras and operator spaces. It is
a purely noncommutative phenomenon: there is no Banach space analog of
nonexactness. In fact every Banach space---and in fact every $M_{n}$%
-space---is $1$-exact. More information and several equivalent
characterizations of exactness can be found in \cite{pisier_exact_1995} and 
\cite[Chapter 17]{pisier_introduction_2003}.

In the following we will denote by $\mathcal{E}_{1}$ the class of
finite-dimensional $1$-exact operator spaces. Moreover we will denote by $%
\mathcal{M}_{\infty }^{0}\subset \mathcal{E}_{1}$ the class of operator
spaces that admit a completely isometric embedding into $M_{n}$ for some $%
n\in \mathbb{N}$. Our goal is to show that $\mathcal{E}_{1}$ is a Fra\"{\i}ss%
\'{e} class.

\subsection{Amalgamation of $1$-exact operator spaces\label%
{Subsection:completionMinfty}}

It is clear that $\mathcal{E}_{1}$ has (HP) from Definition \ref{Definition:
NAP}. It remains to verify that $\mathcal{E}_{1}$ satisfies (AP). This will
give (JEP) as consequence, since the trivial operator space $\left\{
0\right\} $ embeds in every element of $\mathcal{E}_{1}$.

We recall that if $\left( Z_{n}\right) $ is a direct sequence of operator
spaces with completely isometric linear maps $\phi _{n}:Z_{n}\rightarrow
Z_{n+1}$ one can define the \emph{direct limit }$\lim_{\left( \phi
_{n}\right) }Z_{n}$ with canonical completely isometric linear maps $\sigma
_{k}:Z_{k}\rightarrow \lim_{\left( \phi _{n}\right) }Z_{n}$ in the following
way. Let $\ell ^{\infty }\left( \mathbb{N},\left( Z_{n}\right) \right) $ be
the space of sequences $(z_{n})\in \prod_{n}Z_{n}$ with $\sup_{n}\left\Vert
z_{n}\right\Vert <+\infty $. Define an operator seminorm structure on $\ell
^{\infty }\left( \mathbb{N},\left( Z_{n}\right) \right) $ in the sense of 
\cite[1.2.16]{blecher_operator_2004} by setting $\rho _{k}\left(
(z_{n})_{n\in \mathbb{N}}\right) =\limsup_{n\rightarrow +\infty }\left\Vert
z_{n}\right\Vert _{M_{k}\left( Z_{n}\right) }$ for $k\in \mathbb{N}$ and $%
z_{n}\in M_{k}\left( Z_{n}\right) $. Finally define $W$ to be the operator
space associated with such an operator seminorm structure on $\ell ^{\infty
}\left( \mathbb{N},\left( Z_{n}\right) \right) $. For $n,m$ let $\phi
_{n,n}=id_{Z_{n}}$, $\phi _{n,m}=\phi _{m-1}\circ \cdots \circ \phi _{n}$ if 
$n<m$, and $\phi _{n,m}=0$ otherwise. Define the maps $\sigma
_{k}:Z_{k}\rightarrow W$ by $\sigma _{k}(x)=\left( \phi _{k,n}(x)\right)
_{n\in \mathbb{N}}$. Finally set $\lim_{\left( \phi _{n}\right) }Z_{n}$ to
be the closure inside $W$ of the union of the images of $Z_{k}$ under $%
\sigma _{k}$ for $k\in \mathbb{N}$. It is clear that if for every $k\in 
\mathbb{N}$ the space $Z_{k}$ is $1$-exact, then $\lim_{(\phi _{n})}Z_{n}$
is $1$-exact.

The proof of the following proposition is inspired by \cite[Theorem 4.7]%
{effros_injectivity_2001} and \cite[Theorem 1.1]%
{oikhberg_non-commutative_2006}.

\begin{proposition}
\label{Proposition: approximate amalgamation 1-exact}Suppose that $%
X_{0}\subset X$ and $Y$ are finite-dimensional $1$-exact operator spaces, $%
\delta >0$, and $f:X_{0}\rightarrow Y$ is such that $\left\Vert f\right\Vert
_{cb}<1+\delta $ and $\left\Vert f^{-1}\right\Vert _{cb}<1+\delta $. Then
there exists a $1$-exact separable operator space $Z$ and linear complete
isometries $j:Y\rightarrow Z$ and $i:X\rightarrow Z$ such that $\left\Vert
j\circ f-i|_{X_{0}}\right\Vert _{cb}<\delta $.
\end{proposition}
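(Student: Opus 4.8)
The strategy is to reduce to the already-established amalgamation for $M_n$-spaces (Lemma~\ref{Lemma: approximate amalgamation}, in its operator-space incarnation with the minimal quantization, as pointed out in the remark following Remark~\ref{Remark:Mn as OS}) by exploiting $1$-exactness to approximate $X$, $X_0$, $Y$ by subspaces of matrix algebras. Since $X$, $X_0$, $Y$ are finite-dimensional and $1$-exact, for $n$ large enough there are completely isomorphic embeddings of $X$ and $Y$ into $M_n$ with $cb$-norm distortion as close to $1$ as we like; more precisely, I would first find $n$ and a linear complete contraction $u\colon X\to M_n$ with $\|u^{-1}\|_{cb}\le 1+\eta$, and similarly a complete contraction $v\colon Y\to M_n$ with $\|v^{-1}\|_{cb}\le 1+\eta$, for a small $\eta=\eta(\delta,\varepsilon)$ to be chosen at the end.

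\textbf{Carrying out the amalgamation.} Having fixed such $u$ and $v$, consider the map $g = v\circ f\circ u^{-1}\colon u[X_0]\to M_n$. This is a composition of maps whose $cb$-norms and inverse $cb$-norms are controlled: $g$ is defined on the subspace $u[X_0]\subset M_n$, it is completely bounded with $\|g\|_{cb}\le 1+\eta$, and $\|g^{-1}\|_{cb}\le (1+\delta)(1+\eta)^2$ or so. Now I would like to amalgamate the inclusion $u[X_0]\hookrightarrow u[X]\subset M_n$ with $g\colon u[X_0]\to v[Y]\subset M_n$ over the small perturbation $g$. The key point is that all the spaces involved now sit inside $M_n$, hence are $M_n$-spaces, so Lemma~\ref{Lemma: approximate amalgamation} (applied with the minimal operator space structure, which on subspaces of $M_n$ is just the ambient structure) produces an $M_n$-space $W$ and $n$-isometries $i'\colon u[X]\to W$, $j'\colon v[Y]\to W$ with $\|j'\circ g - i'_{|u[X_0]}\|_n$ bounded by the relevant small quantity. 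Taking $\mathrm{MIN}_n$ of $W$ gives a $1$-exact (indeed in $\mathcal{M}_\infty^0$-approximable) operator space; alternatively, one can realize $W$ directly as a subspace of some $M_m$ or a finite $\infty$-sum of copies of $M_n$ via Lemma~\ref{Lemma:approximation}, which keeps everything honestly $1$-exact. Composing with $u$ and $v$ gives candidate maps $i = i'\circ u\colon X\to W$ and $j = j'\circ v\colon Y\to W$.

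\textbf{From $n$-isometry to complete isometry.} This is where the main work lies, and I expect it to be the principal obstacle. The maps $i$, $j$ produced above are only $n$-isometric (since $u$, $v$ need not be complete isometries, only complete isomorphisms, and Lemma~\ref{Lemma: approximate amalgamation} only yields $n$-isometry). To upgrade to genuine complete isometries I would use an iteration/direct-limit argument in the spirit of \cite[Theorem 4.7]{effros_injectivity_2001} and \cite[Theorem 1.1]{oikhberg_non-commutative_2006}: run the construction above with a sequence $n_1<n_2<\cdots$ and a summable sequence of error parameters $\eta_k\to 0$, obtaining a direct sequence $Z_1\to Z_2\to\cdots$ of finite-dimensional $1$-exact operator spaces with connecting maps that are complete contractions whose inverses have $cb$-norm $\to 1$, together with coherent (up to small perturbation) images of $X$ and $Y$. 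Passing to the direct limit $Z=\lim_{(\phi_k)} Z_k$ — which is $1$-exact and separable by the discussion preceding the proposition — the accumulated images of $X$ and $Y$ become honest complete isometries $i\colon X\to Z$, $j\colon Y\to Z$, because an increasing sequence of $n_k$-isometries with vanishing multiplicative error in the limit forces all matrix norms to be preserved. The perturbation bookkeeping must be arranged so that the telescoping sum of errors between $j\circ f$ and $i_{|X_0}$ totals at most $\delta+\varepsilon$; concretely one budgets $\delta$ for the unavoidable distortion coming from $\|f^{-1}\|_{cb}\le 1+\delta$ (exactly as in Lemma~\ref{Lemma: approximate amalgamation}) and spends the geometric series $\sum_k \eta_k<\varepsilon$ on all the exactness-approximation errors. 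The delicate part is checking that at each stage the perturbed copies of $X$ and $Y$ inside $Z_k$ can be carried forward compatibly — this requires, at each step, a small correction of the embedding of $Y$ (and of $X_0$) so that the newly amalgamated copy agrees with the old one up to $\eta_k$, which is exactly the kind of two-step correction argument used in the proof of (1)$\Rightarrow$(2) of Proposition~\ref{Proposition: characterization limit Mn}.
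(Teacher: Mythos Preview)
Your proposal is essentially the paper's proof: approximate $X$ and $Y$ by subspaces of $M_{n_k}$ using $1$-exactness, amalgamate at each level using the $M_n$-space pushout (Lemma~\ref{Lemma: approximate amalgamation} with the minimal quantization), iterate with increasing $n_k$ and summable errors, and pass to the direct limit to obtain genuine complete isometries into a separable $1$-exact $Z$. The paper carries out the recursion slightly more explicitly---at stage $m$ it keeps $Z_m$ and builds $Z_{m+1}$ as a pushout $\mathrm{MIN}_{n_{m+1}}\bigl((Z_m\oplus^1 W\oplus^1 W)/N\bigr)$ with $W=\mathrm{MIN}_{n_{m+1}}(Z_m\oplus^\infty M_{n_{m+1}})$, using injectivity of $M_{n_{m+1}}$ to produce the extensions $\alpha_X,\alpha_Y:Z_m\to M_{n_{m+1}}$---so the connecting maps $\phi_m:Z_m\to Z_{m+1}$ are honestly completely isometric rather than merely asymptotically so, which streamlines the limit argument; but this is a matter of bookkeeping, not a different idea.
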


\begin{proof}
Fix $\delta ^{\prime }<\delta $ such that $\left\Vert f\right\Vert
_{cb}<1+\delta ^{\prime }$ and $\left\Vert f^{-1}\right\Vert _{cb}<1+\delta
^{\prime }$ and $\varepsilon >0$ such that $\delta ^{\prime }+2\varepsilon
<\delta $. We will construct by recursion on $k$ sequences $\left(
n_{k}\right) _{k\in \mathbb{N}}$, $\left( Z_{k}\right) _{k\in \mathbb{N}}$, $%
i_{k}:X\rightarrow Z_{k}$, $j_{k}:Y\rightarrow Z_{k}$, $\phi
_{k}:Z_{k}\rightarrow Z_{k+1}$ such that

\begin{enumerate}
\item $\left( n_{k}\right) _{k\in \mathbb{N}}$ is nondecreasing,

\item $Z_{k}$ is an $M_{n_{k}}$-space,

\item $i_{k}$ and $j_{k}$ are injective completely contractive linear maps,

\item $\phi _{k}$ is a completely isometric linear map,

\item $\left\Vert i_{k}^{-1}\right\Vert _{cb}\leq 1+\varepsilon 2^{-k}$, $%
\left\Vert j_{k}^{-1}\right\Vert _{cb}\leq 1+\varepsilon 2^{-k}$,

\item $\left\Vert \phi _{k}\circ i_{k}-i_{k+1}\right\Vert _{cb}\leq
1+\varepsilon 2^{-k}$, $\left\Vert \phi _{k}\circ j_{k}-j_{k+1}\right\Vert
_{cb}\leq 1+\varepsilon 2^{-k}$, and

\item $\left\Vert j_{k}\circ f-\left( i_{k}\right) |_{X_{0}}\right\Vert
_{cb}\leq \delta ^{\prime }+\varepsilon \sum_{i<k}2^{-i+1}$.
\end{enumerate}

We can apply Lemma \ref{Lemma: approximate amalgamation} and Lemma \ref%
{Lemma:approximation} to define $n_{1},Z_{1},i_{1}$, and $j_{1}$. Suppose
that $n_{k}$, $Z_{k}$, $i_{k}$, $j_{k}$, and $\phi _{k-1}$ have been defined
for $k\leq m$. By Lemma \ref{Lemma:approximation} we can pick $n_{m+1}\geq
n_{m}$ and injective completely contractive maps $\theta _{X}:X\rightarrow
M_{n_{m+1}}$ and $\theta _{Y}:Y\rightarrow M_{n_{m+1}}$ such that $||\theta
_{X}^{-1}||_{cb}\leq 1+\varepsilon 2^{-2\left( m+1\right) }$ and $||\theta
_{Y}^{-1}||_{cb}\leq 1+\varepsilon 2^{-\left( m+1\right) }$. By injectivity
of $M_{n_{m+1}}$ there are complete contractions $\alpha _{X},\alpha
_{Y}:Z_{m}\rightarrow M_{n_{m+1}}$ such that $\alpha _{X}\circ i_{m}=\frac{1%
}{1+\varepsilon 2^{-m}}\theta _{X}$ and $\alpha _{Y}\circ j_{m}=\frac{1}{%
1+\varepsilon 2^{-m}}\theta _{Y}$. Define $W$ to be $\mathrm{MIN}%
_{n_{m+1}}(Z_{m}\oplus ^{\infty }M_{n_{m+1}})$. Define linear maps $\widehat{%
\theta }_{X}:X\rightarrow W$, $x\mapsto \left( i_{m}(x),\theta
_{X}(x)\right) $, $\widehat{\theta }_{Y}:Y\rightarrow W$, $y\mapsto \left(
j_{m}(y),\theta _{Y}(y)\right) $, $\widehat{\alpha }_{X}:Z_{m}\rightarrow W$%
, $z\mapsto \left( z,\alpha _{X}(z)\right) $, and $\widehat{\alpha }%
_{Y}:Z_{m}\rightarrow W$, $z\mapsto \left( z,\alpha _{Y}(z)\right) $.
Observe that $\widehat{\alpha }_{X},\widehat{\alpha }_{Y}$ are completely
isometric, while $\widehat{\theta }_{X}$ and $\widehat{\theta }_{Y}$ are
completely contractive with $||\widehat{\theta }_{X}^{-1}||_{cb}\leq
||\theta _{X}^{-1}||_{cb}\leq 1+\varepsilon 2^{-\left( m+1\right) }$ and $||%
\widehat{\theta }_{Y}^{-1}||_{cb}\leq ||\theta _{Y}^{-1}||_{cb}\leq
1+\varepsilon 2^{-\left( m+1\right) }$. Note also that $||\widehat{\theta }%
_{X}-\widehat{\alpha }_{X}\circ i_{m}||_{cb}\leq \varepsilon 2^{-m}$ and $||%
\widehat{\theta }_{Y}-\widehat{\alpha }_{Y}\circ i_{m}||_{cb}\leq
\varepsilon 2^{-m}$. Define now%
\begin{equation*}
N=\left\{ \left( -\left( z_{0}+z_{1}\right) ,\widehat{\alpha }_{X}(z_{0}),%
\widehat{\alpha }_{Y}(z_{1})\right) \in Z_{m}\oplus W\oplus W:z_{0},z_{1}\in
Z_{m}\right\} \text{.}
\end{equation*}%
Let $Z_{m+1}$ be $\mathrm{MIN}_{n_{m+1}}((Z_{m}\oplus ^{1}W\oplus ^{1}W)/N)$%
. Consider the first coordinate inclusion $\phi _{m}:Z_{m}\rightarrow
Z_{m+1} $ of $Z_{m}$ into $Z_{m+1}$. Similarly define $\psi _{X},\psi
_{Y}:W\rightarrow Z_{m+1}$ to be the second and third coordinate inclusions.
Arguing as in the proof of Lemma \ref{Lemma: approximate amalgamation} one
can verify directly that $\phi _{m},\psi _{X},\psi _{Y}$ are complete
isometries. Alternatively one can use \cite[Lemma 2.4]%
{oikhberg_non-commutative_2006} together with the properties of $\mathrm{MIN}
$. Observe that $\phi _{m}=\psi _{X}\circ \widehat{\alpha }_{X}=\psi
_{Y}\circ \widehat{\alpha }_{Y}$. Define now linear complete contractions $%
i_{m+1}:=\psi _{X}\circ \widehat{\theta }_{X}:X\rightarrow Z_{m+1}$ and $%
j_{m+1}:=\psi _{Y}\circ \widehat{\theta }_{Y}:Y\rightarrow Z_{m+1}$.Observe
that $\left\Vert i_{m+1}^{-1}\right\Vert _{cb}\leq ||\widehat{\theta }%
_{X}^{-1}||_{cb}<1+\varepsilon 2^{-\left( m+1\right) }$ and $\left\Vert
j_{m+1}^{-1}\right\Vert _{cb}\leq ||\widehat{\theta }_{Y}^{-1}||_{cb}<1+%
\varepsilon 2^{-\left( m+1\right) }$. Moreover%
\begin{equation*}
\left\Vert \phi _{m}\circ i_{m}-i_{m+1}\right\Vert _{cb}=\left\Vert \phi
_{m}\circ i_{m}-\psi _{X}\circ \widehat{\theta }_{X}\right\Vert _{cb}\leq
\left\Vert \phi _{m}\circ i_{m}-\psi _{X}\circ \widehat{\alpha }_{X}\circ
i_{m}\right\Vert _{cb}+\varepsilon 2^{-m}=\varepsilon 2^{-m}\text{.}
\end{equation*}%
Similarly, $\left\Vert \psi _{m}\circ j_{m}-j_{m+1}\right\Vert _{cb}\leq
\varepsilon 2^{-m}$. Finally we have%
\begin{eqnarray*}
\left\Vert i_{m+1}-j_{m+1}\circ f\right\Vert _{cb} &=&\left\Vert \psi
_{X}\circ \widehat{\theta }_{X}-\psi _{Y}\circ \widehat{\theta }_{Y}\circ
f\right\Vert _{cb}\leq \left\Vert \psi _{X}\circ \widehat{\alpha }_{X}\circ
i_{m}-\psi _{Y}\circ \widehat{\alpha }_{Y}\circ j_{m}\circ f\right\Vert
_{cb}+\varepsilon 2^{-m+1} \\
&\leq &\left\Vert \phi _{m}\circ i_{m}-\phi _{m}\circ j_{m}\circ
f\right\Vert _{cb}+\varepsilon 2^{-m+1}\leq \left\Vert i_{m}-j_{m}\circ
f\right\Vert _{cb}+\varepsilon 2^{-m+1}\leq \delta ^{\prime }+\varepsilon
\sum_{i\leq m}2^{-i+1}\text{.}
\end{eqnarray*}%
This concludes the recursive construction. Let now $Z$ be $\lim_{\left( \phi
_{k}\right) }Z_{k}$ with canonical linear complete isometries $\sigma
_{k}:Z_{k}\rightarrow Z$. Consider also the embeddings $i:X\rightarrow Z$
and $j:Y\rightarrow Z$ defined by $i:=\lim_{k\rightarrow +\infty }\sigma
_{k}\circ i_{k}$ and $j:=\lim_{k\rightarrow +\infty }\sigma _{k}\circ j_{k}$%
.It is easily seen as in the proof of \cite[Theorem 4.7]%
{effros_injectivity_2001} that $Z$ is a $1$-exact separable operator space,
and $i,j$ are well defined completely isometric linear maps such that $%
\left\Vert j\circ f-i|_{X_{0}}\right\Vert _{cb}\leq \delta +2\varepsilon $.
\end{proof}

In particular Proposition \ref{Proposition: approximate amalgamation 1-exact}
for $\delta =0$ shows that the class $\mathcal{E}_{1}$ has (NAP). It is not
difficult to modify the proof above to show that the conclusions of
Proposition \ref{Proposition: approximate amalgamation 1-exact} hold even
when $X$ and $Y$ are (not necessarily finite-dimensional) separable $1$%
-exact operator spaces.

\begin{remark}
\label{Remark:existence}The existence of the noncommutative Gurarij $\mathbb{%
NG}$ can be deduced by a repeated application of Proposition \ref%
{Proposition: approximate amalgamation 1-exact} to the class of
finite-dimensional operator spaces that embed completely isometrically into $%
M_{n}$ for some $n\in \mathbb{N}$.
\end{remark}

The following lemma can be easily obtained from Proposition \ref%
{Proposition: approximate amalgamation 1-exact}, similarly as Lemma 2.2 is
derived from Lemma 2.1 in \cite{kubis_proof_2013}.

\begin{lemma}
\label{Lemma:put-back}If $Z$ is a Gurarij operator space, $E\subset Z$ is
finite-dimensional, $Y$ is finite-dimensional and $1$-exact, and $%
f:E\rightarrow F$ is an invertible linear map such that $||f||_{cb}<1+\delta 
$ and $||f^{-1}||_{cb}<1+\delta $ then for every $\varepsilon >0$ there
exists $g:F\rightarrow Z$ such that $||g||_{cb}<1+\varepsilon $, $%
||g^{-1}||_{cb}<1+\varepsilon $, and $||g\circ f-id_{E}||_{cb}<\delta $.
\end{lemma}

\subsection{The Fra\"{\i}ss\'{e} metric space}

Fix $k\in \mathbb{N}$ and denote by $\mathcal{E}_{1}(k)$ the space of pairs $%
\left( \bar{a},X\right) $ such that $X$ is a $k$-dimensional $1$-exact
operator space and $\bar{a}$ is a basis of $X$. Two such pairs $\left( \bar{a%
},X\right) $ and $\left( \bar{b},Y\right) $ are identified if there is a
complete isometry $\phi $ from $X$ to $Y$ such that $\phi (\bar{a})=\bar{b}$%
. To simplify the notation the pair $\left( \bar{a},X\right) $ will be
simply denoted $\bar{a}$, where we set $X=\left\langle \bar{a}\right\rangle $%
. Denote by $d_{\mathcal{E}_{1}}$ the Fra\"{\i}ss\'{e} metric on $\mathcal{E}%
_{1}(k)$ as in Definition \ref{Definition:FraisseMetric}. We further denote
by $\mathcal{M}_{\infty }^{0}(k)$ the subspace of $\mathcal{E}_{1}(k)$
consisting of pairs $\left( \bar{a},X\right) $ such that $X$ admits a
completely isometric embedding into $M_{n}$ for some $n\in \mathbb{N}$. As
in Subsection \ref{Subsection:PolishMn} we define an element $\bar{a}$ of $%
\mathcal{E}_{1}$ to be $N$\emph{-Auerbach} if $\left\Vert a_{i}\right\Vert
\leq N$ and $\left\Vert a_{i}^{\ast }\right\Vert \leq N$ for every $i\leq k$%
, where $\bar{a}^{\ast }$ denotes the dual basis of $\bar{a}$. We denote by $%
\mathcal{E}_{1}(k,N)$ the set of $N$-Auerbach $\bar{a}\in \mathcal{E}_{1}(k)$%
. Observe that the set $\mathcal{E}_{1}(k,N)$ is closed in $\mathcal{E}%
_{1}(k)$. If $\bar{a},\bar{b}\in \mathcal{E}_{1}(k)$, denote as in
Subsection \ref{Subsection:PolishMn} by $\iota _{\bar{a},\bar{b}}$ the
linear isomorphism from $\left\langle \bar{a}\right\rangle $ to $%
\left\langle \bar{b}\right\rangle $ such that $\iota _{\bar{a},\bar{b}%
}(a_{i})=b_{i}$ for $i\leq k$. Define the completely bounded distance $%
d_{cb}(\bar{a},\bar{b})$ to be $||\iota _{\bar{a},\bar{b}}||_{cb}\;||\iota _{%
\bar{a},\bar{b}}^{-1}||_{cb}$.

\begin{proposition}
\label{Proposition: comparison distances E1}Suppose that $\bar{a},\bar{b}\in 
\mathcal{E}_{1}(k,N)$. Then $d_{\mathcal{E}_{1}}(\bar{a},\bar{b})\leq d_{cb}(%
\bar{a},\bar{b})-1$, and $d_{cb}(\bar{a},\bar{b})\leq \frac{1+kNd_{\mathcal{E%
}_{1}}(\bar{a},\bar{b})}{1-kNd_{\mathcal{E}_{1}}(\bar{a},\bar{b})}$.
\end{proposition}

\begin{proof}
The first inequality follows from Proposition \ref{Proposition: approximate
amalgamation 1-exact}. Suppose now that $d_{\mathcal{E}_{1}}(\bar{a},\bar{b}%
)<\delta $ for some $\delta >0$. Then one can realize $\left\langle \bar{a}%
\right\rangle ,\left\langle \bar{b}\right\rangle $ as subspaces of a $1$%
-exact operator space $Z$ in such a way that $\left\Vert
a_{i}-b_{i}\right\Vert <\delta $ for every $i\leq k$. The conclusion then
follows from the small perturbation lemma \cite[Lemma 2.13.1]%
{pisier_introduction_2003}.
\end{proof}

Using Proposition \ref{Proposition: comparison distances E1} one can show
that $\left( \mathcal{E}_{1}(k),d_{\mathcal{E}_{1}}\right) $ is a separable
metric space. Recall that $\mathcal{M}_{\infty }^{0}\subset \mathcal{E}_{1}$
denotes the class of operator spaces that admit a completely isometric
embedding into $M_{n}$ for some $n\in \mathbb{N}$.

\begin{proposition}
\label{Proposition:FraissespaceE1}For every $k\in \mathbb{N}$, $\left( 
\mathcal{E}_{1}(k),d_{\mathcal{E}_{1}}\right) $ is a complete metric space,
and $\mathcal{M}_{\infty }^{0}(k)$ is a dense subset of $\mathcal{E}_{1}(k)$.
\end{proposition}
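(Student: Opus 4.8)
The plan is to prove the two assertions separately, dealing first with density of $\mathcal{M}_\infty^0(k)$ in $\mathcal{E}_1(k)$ and then deducing completeness from it. For density, fix $\bar a\in\mathcal{E}_1(k)$ with $X=\langle\bar a\rangle$ a $k$-dimensional $1$-exact operator space, and let $\varepsilon>0$. By $1$-exactness, $ex(X)=1$, so for $n$ large enough the identity map $\mathrm{MIN}_n(X)\to X$ has completely bounded norm at most $1+\varepsilon$; equivalently, there is a subspace $F\subset M_n$ and a linear isomorphism $\phi:X\to F$ with $\|\phi\|_{cb}\|\phi^{-1}\|_{cb}\le 1+\varepsilon$. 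After rescaling $\phi$ we may take $\phi$ completely contractive with $\|\phi^{-1}\|_{cb}\le 1+\varepsilon$. Transporting the basis, set $\bar b=\phi(\bar a)$, so $\bar b\in\mathcal{M}_\infty^0(k)$ and $d_{cb}(\bar a,\bar b)\le 1+\varepsilon$. By the second inequality of Proposition~\ref{Proposition: comparison distances E1} (which rests on Proposition~\ref{Proposition: approximate amalgamation 1-exact}), $d_{\mathcal{E}_1}(\bar a,\bar b)\le d_{cb}(\bar a,\bar b)-1\le\varepsilon$. Since $\varepsilon$ was arbitrary, $\mathcal{M}_\infty^0(k)$ is dense in $\mathcal{E}_1(k)$. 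Separability then follows because $\mathcal{M}_\infty^0(k)$ is itself separable: for each fixed $n$, the set of $k$-dimensional subspaces of $M_n$ together with a chosen basis, modulo complete isometry fixing the basis, can be parametrized—up to small perturbation measured by $d_{cb}$, hence by $d_{\mathcal{E}_1}$ via Proposition~\ref{Proposition: comparison distances E1}—by finitely many matrix entries ranging over a compact set, exactly as in \cite[Proposition~12]{pisier_exact_1995}.

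For completeness, the key observation—exactly as in the proof of Proposition~\ref{Proposition:MnPolish}—is that any Cauchy sequence $(\bar a^{(m)})$ in $(\mathcal{E}_1(k),d_{\mathcal{E}_1})$ is eventually $N$-Auerbach for some fixed $N$, using Proposition~\ref{Proposition: comparison distances E1} to control the norms $\|a_i^{(m)}\|$ and $\|(a_i^{(m)})^*\|$. Thus it suffices to show that $(\mathcal{E}_1(k,N),d_{\mathcal{E}_1})$ is complete. Given a Cauchy sequence there, the comparison inequalities show that $d_{cb}(\bar a^{(m)},\bar a^{(m')})\to 1$ as $m,m'\to\infty$, so after identifying all $\langle\bar a^{(m)}\rangle$ with a single $k$-dimensional vector space via the basis maps $\iota$, the operator space matrix norms $[\alpha_{ij}]\mapsto\|\sum_i\alpha_i\otimes a_i^{(m)}\|_{M_p}$ form a Cauchy sequence, uniformly on bounded sets, for each $p$. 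The limiting family of matrix norms satisfies Ruan's axioms (being a pointwise limit of families each satisfying them), hence defines an operator space structure on the $k$-dimensional space; call the resulting element $\bar a$. One checks $\bar a$ is $1$-exact—this is where I expect the main subtlety—since $1$-exactness of a $k$-dimensional operator space is detected by $\|\mathrm{id}:\mathrm{MIN}_N(\cdot)\to\cdot\|_{cb}$ on a single index $N$ depending only on $k$ and the ambient constant, and this quantity is continuous in the matrix norms, so it passes to the limit; alternatively, $\bar a\in\overline{\mathcal{M}_\infty^0(k)}$ by the density part, and the closure of $\mathcal{M}_\infty^0(k)$ consists of $1$-exact spaces. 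Finally $d_{cb}(\bar a^{(m)},\bar a)\to 1$ by uniform convergence of the matrix norms on the unit ball, so $d_{\mathcal{E}_1}(\bar a^{(m)},\bar a)\to 0$ by Proposition~\ref{Proposition: comparison distances E1}.

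The step I expect to be the main obstacle is verifying that the limiting matrix norms actually define a genuine operator space—that the limit is nondegenerate (all basis vectors remain nonzero and linearly independent, which the $N$-Auerbach bound secures) and that it is $1$-exact. The cleanest route for the latter is to invoke the density statement already proved: $\bar a$ lies in the $d_{\mathcal{E}_1}$-closure of $\mathcal{M}_\infty^0(k)$, and a space within $d_{\mathcal{E}_1}$-distance, hence within $d_{cb}$-distance close to $1+o(1)$, of subspaces of $M_n$'s has exactness constant arbitrarily close to $1$, hence equal to $1$. Everything else is a routine Arzelà–Ascoli argument on the unit ball of $M_p(\mathbb{Q}(i))$ followed by a diagonalization over $p$, combined with the elementary bookkeeping that $d_{cb}$ and $d_{\mathcal{E}_1}$ are comparable on $\mathcal{E}_1(k,N)$.
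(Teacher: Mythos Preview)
Your argument is correct, and the density and separability portions are essentially the same as the paper's (the paper just states them more tersely). The completeness argument, however, takes a different route. The paper constructs the limit in one stroke as an ultraproduct: fixing a nonprincipal ultrafilter $\mathcal{U}$, it forms $X=\prod_{\mathcal{U}}\langle\bar a^{(m)}\rangle$ and lets $a_i$ be the class of $(a_i^{(m)})_m$. The ultraproduct packaging automatically delivers an operator space satisfying Ruan's axioms at every matrix level, and the estimate $d_{cb}(\bar a^{(m)},\bar a)\to 1$ drops out of the definition of the ultraproduct norm; $1$-exactness of $\langle\bar a\rangle$ then follows because the exactness constant is $d_{cb}$-continuous. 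Your approach instead mirrors the proof of Proposition~\ref{Proposition:MnPolish}, building the limit level by level as a pointwise limit of matrix norms. This is more elementary---no ultrafilters---but costs you the bookkeeping of verifying Ruan's axioms and nondegeneracy by hand. Two small remarks: first, since the sequence is already $d_{cb}$-Cauchy, the matrix norms converge outright and no Arzel\`a--Ascoli subsequence extraction or diagonalization over $p$ is actually needed; second, your ``alternative'' route to $1$-exactness via density is mildly circular (density was proved only inside $\mathcal{E}_1(k)$, and membership of $\bar a$ there is exactly what is in question), so you should rely on your first suggestion, namely that $ex(\langle\bar a\rangle)\le d_{cb}(\bar a,\bar a^{(m)})\cdot ex(\langle\bar a^{(m)}\rangle)\to 1$.
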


\begin{proof}
Suppose that $(\bar{a}^{\left( m\right) })_{m\in \mathbb{N}}$ is a Cauchy
sequence in $\mathcal{E}_{1}(k)$. Applying Proposition \ref{Proposition:
approximate amalgamation 1-exact} one can construct a sequence $\left(
X_{n}\right) $ of separable $1$-exact operator spaces such that $%
X_{n}\subset X_{n+1}$ for every $n\in \mathbb{N}$ and $\left\langle \bar{a}%
^{\left( m\right) }\right\rangle \subset X_{n}$ for $m\leq n$ in such a way
that $\max_{i}||a_{i}^{(m_{0})}-a_{i}^{(m_{1})}||{}\leq d_{\mathcal{E}_{1}}(%
\bar{a}^{\left( m_{0}\right) },\bar{a}^{\left( m_{1}\right) })$ for $%
m_{0},m_{1}\leq n$. One can then let $X$ be the inductive limit of the
sequence $(X_{n})$, and $\bar{a}_{i}$ for $i\leq k$ be the limit of the
sequence $(\bar{a}_{i}^{(m)})_{i\in \mathbb{N}}$ in $X$. This defines an
element $\bar{a}$ of $\mathcal{E}_{1}\left( k\right) $ that is the limit of $%
(\bar{a}^{\left( m\right) })_{m\in \mathbb{N}}$. Observe now that by
Proposition \ref{Proposition: comparison distances E1} and by the definition
of $1$-exact operator space, $\mathcal{M}_{\infty }^{0}(k)$ is dense in $(%
\mathcal{E}_{1}(k),d_{\mathcal{E}_{1}})$. It follows from Proposition \ref%
{Proposition:MnPolish} that $\mathcal{M}_{\infty }^{0}(k)$ is a separable
subspace of $\mathcal{E}_{1}(k)$.
\end{proof}

\subsection{The noncommutative Gurarij space as a Fra\"{\i}ss\'{e} limit 
\label{Subsection: NG as limit}}

We have shown in Subsection \ref{Subsection:completionMinfty} that $\mathcal{%
E}_{1}$ is a Fra\"{\i}ss\'{e} class. Therefore we can consider its
corresponding Fra\"{\i}ss\'{e} limit according to Theorem \ref%
{Theorem:Fraisselimit}. The following definition has been suggested in \cite%
{oikhberg_non-commutative_2006}.

\begin{definition}
\label{Definition:NG}A separable $1$-exact operator space $Z$ is \emph{%
(noncommutative) Gurarij} if for any finite-dimensional $1$-exact operator
spaces $X\subset Y$, complete isometry $\phi :X\rightarrow Z$, and $%
\varepsilon >0$, there is an injective linear map $\psi :Y\rightarrow Z$
extending $\phi $ such that $||\psi ||_{cb}\;||\psi
^{-1}||_{cb}<1+\varepsilon $.
\end{definition}

\begin{proposition}
\label{Proposition: characterization limit E1}Suppose that $Z$ is a
separable $1$-exact. The following conditions are equivalent:

\begin{enumerate}
\item $Z$ is a Fra\"{\i}ss\'{e} limit of the class $\mathcal{E}_{1}$;

\item For every $n\in \mathbb{N}$, subspace $X$ of $M_{n}$, complete
isometry $\phi :X\rightarrow Z$, and $\varepsilon >0$ there is a complete
isometry $\psi :M_{n}\rightarrow Z$ such that $\left\Vert \psi |_{X}-\phi
\right\Vert <\varepsilon $;

\item $Z$ is Gurarij;

\item $Z$ satisfies the same property as in Definition \ref{Definition:NG}
when $Y=M_{n}$ for some $n\in \mathbb{N}$.
\end{enumerate}
\end{proposition}

\begin{proof}
The implications (1)$\Rightarrow $(2) and (3)$\Rightarrow $(4) are obvious.
The implications (2)$\Rightarrow $(4) and (1)$\Rightarrow $(3) can be proved
using Proposition \ref{Proposition: comparison distances E1} similarly as
the implication (1)$\Rightarrow $(2) in Proposition \ref{Proposition:
characterization limit Mn}. The implication (4)$\Rightarrow $(1) can be
proved as (2)$\Rightarrow $(1) of Proposition \ref{Proposition:
characterization limit Mn}, or \cite[Theorem 3.3]{ben_yaacov_fraisse_2012},
with the extra ingredient of \cite[Lemma 2.16]{ben_yaacov_fraisse_2012} and
the fact that $\mathcal{M}_{\infty }^{0}(k)$ is dense in $\left( \mathcal{E}%
_{1}(k),d_{\mathcal{E}_{1}}\right) $.
\end{proof}

With such a characterization of a limit of the Fra\"{\i}ss\'{e} class $%
\mathcal{E}_{1}$ at hand, one can prove the existence, uniqueness,
homogeneity, and universality properties of $\mathbb{NG}$ as stated in
Theorem \ref{Theorem:main-NG} (1),(2),(3). Indeed the existence, uniqueness,
and universality statements follow from Theorem \ref{Theorem:Fraisselimit}
and Proposition \ref{Proposition: characterization limit E1}. The
homogeneity property of $\mathbb{NG}$ can be deduced from Lemma \ref%
{Lemma:put-back} via an intertwining argument, similarly as Theorem 1.1 is
deduced from Lemma 2.1 in \cite{kubis_proof_2013}.

\begin{remark}
\label{Remark:limit-NG}It follows from the proof of \cite[Lemma 3.17]%
{ben_yaacov_fraisse_2015} and density of $\mathcal{M}_{\infty }^{0}(k)$
inside $\mathcal{E}_{1}\left( k\right) $ that $\mathbb{NG}$ can be realized
as the limit of an inductive sequence $\left( X_{n}\right) $ of operator
spaces with completely isometric connective maps $\phi _{n}:X_{n}\rightarrow
X_{n+1}$ such that $X_{n}$ is completely isometric to $M_{d_{n}}$ for some $%
d_{n}\in \mathbb{N}$. This shows that $\mathbb{NG}$ is a \emph{rigid} $%
\mathcal{OL}_{\infty ,1+}$ as introduced in \cite%
{effros_OL_1998,junge_OL_2003}. In particular, it is \emph{nuclear} and it
has the \emph{weak expectation property} \cite[Theorem 4.5]%
{effros_injectivity_2001}.
\end{remark}

\begin{remark}
\label{Remark:limit-Gn}It also follows from the proof of \cite[Lemma 3.17]%
{ben_yaacov_fraisse_2015} and the fact that, for every $n,k\in \mathbb{N}$, $%
\mathcal{M}_{n}(k)\subset \mathcal{E}_{1}(k)$ and $\bigcup_{n}\mathcal{M}%
_{n}(k)$ is dense in $\mathcal{E}_{1}(k)$, that one can realize $\mathcal{E}%
_{1}(k)$ as the limit of an inductive sequence $\left( X_{n}\right) $ of
operator spaces with completely isometric connective maps $\phi
_{n}:X_{n}\rightarrow X_{n+1}$ such that $X_{n}$ is completely isometric to $%
\mathbb{G}_{n}$ endowed with its canonical operator space structure. In this
sense $\mathbb{NG}$ can be regarded as the limit of the spaces $\mathbb{G}%
_{n}$ for $n\in \mathbb{N}$.
\end{remark}

One can regard $\mathbb{NG}$ as the operator space analog of the Cuntz
algebra $\mathcal{O}_{2}$. Indeed, it is observed in \cite%
{eagle_fraisse_2014} that $\mathcal{O}_{2}$ is the Fraisse limit of the
class of finitely generated exact C*-algebras. This is indeed an immediate
consequence of Kirchberg's exact embedding theorem \cite[Theorem 2.8]%
{kirchberg_embedding_2000} asserting that every separable exact C*-algebra
admits an embedding into $\mathcal{O}_{2}$, and any two such embeddings are
approximately unitarily conjugate. The universality and homogeneity
properties of $\mathbb{NG}$ can be seen as the natural operator space
analogs of these properties of $\mathcal{O}_{2}$. Kirchberg's nuclear
embedding theorem asserts furthermore that a separable C*-algebra is nuclear
if and only if it is *-isomorphic to a subalgebra of $\mathcal{O}_{2}$ that
is the range of a conditional expectation of $\mathcal{O}_{2}$ \cite[Theorem
6.3.12]{rordam_classification_2002}. The operator space version of this fact
has been proved in \cite{lupini_operator_2015}: a separable operator space
is nuclear if and only if it is completely isometrically isomorphic to the
range of a completely contractive projection of $\mathbb{NG}$.

Since a C*-algebra is exact if and only if it is $1$-exact as an operator
space \cite[Corollary 17.5]{pisier_introduction_2003}, it follows that $%
\mathbb{NG}$ contains a completely isometric copy of any separable exact
C*-algebra. Particularly, $\mathbb{NG}$ contains a completely isometric copy
of $\mathcal{O}_{2}$. However, $\mathbb{NG}$ is \textquotedblleft
larger\textquotedblright\ than $\mathcal{O}_{2}$, as we will prove in \S \ref%
{Subsection:triple-NG}: $\mathbb{NG}$ does not embed completely
isometrically into $\mathcal{O}_{2}$. In fact, $\mathbb{NG}$ does not admit
any completely isometric embedding into any exact C*-algebra. This seems to
be the first example of a separable nuclear operator space that does embed
into an exact C*-algebra. An example of a separable nuclear operator system
that does not admit any \emph{unital }completely isometric embedding into a
unital exact C*-algebra has been constructed in \cite%
{kirchberg_c*-algebras_1998}.

Since $\mathbb{NG}$ is itself $1$-exact, one can conclude that $\mathbb{NG}$
is not completely isometric to a C*-algebra or to a ternary ring of
operators. This answers a question of Oikhberg from \cite%
{oikhberg_non-commutative_2006}. An explicit example of a separable nuclear
operator system that is not \emph{unitally} completely isometrically
isomorphic to a unital C*-algebra is provided in \cite%
{han_approximation_2011}.

\section{Further properties of the noncommutative Gurarij space\label%
{Section:properties}}

\subsection{The triple envelope of the Gurarij operator space\label%
{Subsection:triple-NG}}

A \emph{ternary ring of operators }(TRO) is a subspace $V$ of $B\left(
H,K\right) $ for some Hilbert spaces $H,K$ such that $xy^{\ast }z\in V$ for
any $x,y,z\in V$. The operation $\left( x,y,z\right) \mapsto xy^{\ast }z$ on 
$V$ is called \emph{triple product}. A \emph{triple morphism} between TROs
is a linear map that preserves the triple product. Observe that (the
restriction of) a $\ast $-homomorphism is in particular a triple morphism. A
TRO has a canonical operator space structure, where the matrix norms are
uniquely determined by the triple product \cite[Proposition 2.1]%
{kaur_local_2002}. A triple morphism between TROs is automatically
completely contractive \cite{harris_generalization_1981,hamana_triple_1999},
and it is injective if and only if is completely isometric \cite[Lemma 8.3.2]%
{blecher_operator_2004}. The range of a completely contractive projection $P$
of a TRO $X$ is also a TRO when endowed with the triple product $\left(
x,y,z\right) \mapsto P\left( xy^{\ast }z\right) $, where $xy^{\ast }z$ is
the triple produce evaluated in $X$ \cite{youngson_completely_1983}. A \emph{%
representation} of a TRO $X$ is a triple morphism from $X$ to $B\left(
H,K\right) $ for some Hilbert spaces $H,K$.

A W*-TRO is a TRO $V\subset B\left( H,K\right) $ that is closed in the weak
operator topology. This is equivalent to the assertion that $V$ is a dual
Banach space \cite[Theorem 2.6]{effros_injectivity_2001}. Every triple
isomorphism between W*-TROs is automatically w*-continuous. In the language
of Hilbert modules, TROs correspond precisely to the \emph{full Hilbert
modules }over C*-algebras, while W*-TRO correspond to \emph{self-dual and
weakly full Hilbert modules }over von Neumann algebras; see \cite%
{zettl_characterization_1983,ruan_type_2004}.

In the following, we denote by $\mathcal{L}\left( V\right) $ the \emph{%
linking }C*-algebra of $V$, and by $e$ the canonical projection such that $e%
\mathcal{L}\left( V\right) e^{\bot }=V$. Following \cite{kaur_local_2002},
we also let $\mathcal{R}\left( V\right) $ be the weak*-closure of $\mathcal{L%
}\left( V\right) $. (It is proved in \cite{kaur_local_2002} that this is
well defined, and does not depend from the concrete representation of $%
\mathcal{L}\left( V\right) $.) The corner $e\mathcal{L}\left( V\right) e$
coincides with the closed linear span $V^{\star }V$ of operators of the form 
$x^{\ast }y$ for $x,y\in V$. One can similarly identify $e^{\bot }\mathcal{L}%
\left( V\right) e^{\bot }$ with $VV^{\star }$. It is observed in \cite[page
269]{kaur_local_2002} that $e\in \mathcal{R}\left( V\right) $, and if $V$ is
a W*-TRO, then $\mathcal{R}\left( V\right) $ is a von Neumann algebra called
the \emph{linking von Neumann algebra }of $V$, and $V=e\mathcal{R}\left(
V\right) e^{\bot }$. For an arbitrary TRO $V$, the linking von Neumann
algebra $\mathcal{R}\left( V^{\ast \ast }\right) $ of the second dual of $V$
can be identified with the weak*-closure of the linking C*-algebra of $V$ 
\cite[Proposition 2.4]{kaur_local_2002}.

A TRO is injective (resp.\ nuclear, exact) if and only if its linking
C*-algebra is injective (resp.\ nuclear, exact) \cite[Theorem 6.3, Theorem
6.5]{kaur_local_2002}. A finite-dimensional operator space $X$ is a TRO if
and only if it is injective \cite{smith_finite_2000}. In this case, $X$ is
the finite $\infty $-sum of spaces of matrices $M_{n,m}$ for $n,m\in \mathbb{%
N}$. We will use repeatedly the following result of Hamana \cite[Theorem
3.2(ii)]{hamana_triple_1999}, which can be regarded as a nonunital analog of
a well known result of Choi and Effros \cite[Theorem 4.1]%
{choi_completely_1976}.

\begin{proposition}[Hamana]
\label{Proposition:nonunitalCE}Suppose that $V,W$ are TROs and $\theta
:V\rightarrow W$ is a linear complete isometry. Then there exists a triple
morphism $\eta :\mathcal{T}\left( \theta \left[ V\right] \right) \rightarrow
V$ such that $\eta \circ \theta =id_{V}$, where $\mathcal{T}\left( \theta %
\left[ V\right] \right) $ is the subTRO of $W$ generated by the image of $V$
under $\theta $.
\end{proposition}

We will call a TRO which is an $M_{n}$-space an $n$-minimal TRO. The $1$%
-minimal TROs are precisely the commutative TROs in the sense of \cite[\S %
8.6.4]{blecher_operator_2004}, and are usually called $C_{\sigma }$-spaces
in the Banach space literature. More information on the theory of TROs can
be found in \cite%
{hestenes_ternary_1962,harris_generalization_1981,zettl_characterization_1983,junge_OL_2003,junge_OL_2004,kaur_local_2002,ruan_type_2004,blecher_multipliers_2004,neal_operator_2003}%
.

Suppose that $X$ is an operator space. A \emph{triple cover }of $X$ is a
pair $\left( \phi ,V\right) $ where $V$ is a TRO and $\phi :X\rightarrow V$
is a linear complete isometry. Triple covers naturally form a category,
where a morphism from $\left( \phi _{0},V_{0}\right) $ to $\left( \phi
_{1},V_{1}\right) $ is a triple morphism $\psi :V_{0}\rightarrow V_{1}$ such
that $\psi \circ \phi _{0}=\phi _{1}$. The (unique up to isomorphism)
terminal object in such a category is called the \emph{triple envelope} $%
\mathcal{T}_{e}\left( X\right) $ of $X$. The existence of such an object has
been proved independently by Ruan \cite{ruan_injectivity_1989} and Hamana 
\cite{hamana_triple_1999}.

The same proof as \cite[Proposition 8]{kirchberg_c*-algebras_1998}, where 
\cite[Lemma 6]{kirchberg_c*-algebras_1998} is replaced by Proposition \ref%
{Proposition:nonunitalCE}, shows that any operator space $X$ has also a
(unique) \emph{universal TRO}. This is a triple cover $\left( u_{X},\mathcal{%
T}_{u}\left( X\right) \right) $ of $X$ that satisfies the following
universal property: whenever $\phi $ is a complete contraction from $X$ to a
TRO $W$, there exists a (necessarily unique) triple morphism $\theta :%
\mathcal{T}_{u}\left( X\right) \rightarrow W$ such that $\theta \circ
u_{X}=\phi $. The proof also shows that $\mathcal{T}_{u}\left( X\right) $
has a faithful family of finite-dimensional representations. Indeed one can
consider the collection $\mathcal{F}$ of all surjective completely
contractive maps $s:X\rightarrow V_{s}$ where $V_{s}$ is a
finite-dimensional TRO. Let $W$ be the $\infty $-sum of $V_{s}$ for $s\in 
\mathcal{F}$. One can then define $\mathcal{T}_{u}\left( X\right) $ to be
the subTRO of $W$ generated by the elements of the form $\left( s(x)\right)
_{s\in \mathcal{F}}$ for $x\in X$. The completely isometric map $%
u_{X}:X\rightarrow \mathcal{T}_{u}\left( X\right) $ is given by $x\mapsto
\left( s(x)\right) _{s\in \mathcal{F}}$.

In the following, we will identify an operator space with a subspace of its
triple envelope. We will denote the canonical inclusion of an operator space 
$X$ into its universal TRO by $u_{X}$. The universal property of $\mathcal{T}%
_{u}\left( X\right) $ shows that there exists a unique (necessarily
surjective) triple morphism $\sigma _{X}:\mathcal{T}_{u}\left( X\right)
\rightarrow \mathcal{T}_{e}\left( X\right) $ such that $\sigma _{X}\circ
u_{X}$ is the inclusion map from $X$ into $\mathcal{T}_{e}\left( X\right) $.

The same proof as \cite[Proposition 9]{kirchberg_c*-algebras_1998}---where
one replaces Arveson's extension theorem \cite[Theorem 1.2.3]%
{arveson_subalgebras_1969} with the Haagerup-Paulsen-Wittstock extension
theorem \cite[Theorem 8.2]{paulsen_completely_2002}---shows that a complete
isometry between operator spaces \textquotedblleft lifts\textquotedblright\
to an injective triple morphism between the corresponding universal TROs.
The precise statement is th following:

\begin{lemma}
\label{Lemma:lift}If $\phi :X\rightarrow Y$ is a linear complete isometry,
then there exists a unique injective triple morphism $\overline{\phi }:%
\mathcal{T}_{u}\left( X\right) \rightarrow \mathcal{T}_{u}\left( Y\right) $
such that $\overline{\phi }\circ u_{X}=u_{Y}\circ \phi $.
\end{lemma}

Our goal is to prove that the canonical triple morphism $\sigma _{\mathbb{NG}%
}:\mathcal{T}_{u}\left( \mathbb{NG}\right) \rightarrow \mathcal{T}_{e}\left( 
\mathbb{NG}\right) $ is injective and, hence, a triple isomorphism.

\begin{theorem}
\label{Theorem:universalNG}The canonical triple morphism $\sigma _{\mathbb{NG%
}}:\mathcal{T}_{u}\left( \mathbb{NG}\right) \rightarrow \mathcal{T}%
_{e}\left( \mathbb{NG}\right) $ is a triple isomorphism.
\end{theorem}

\begin{proof}
Since $\sigma _{\mathbb{NG}}$ is a surjective triple morphism, it only
remains to prove that $\sigma _{\mathbb{NG}}$ is injective. As observed in
Remark \ref{Remark:limit-NG}, $\mathbb{NG}$ is the limit of an inductive
sequence $\left( X_{k}\right) _{k\in \mathbb{N}}$ with unital completely
isometric connective maps $\phi _{k}:X_{k}\rightarrow X_{k+1}$, where $X_{k}$
is completely isometrically isomorphic to a full matrix algebra. Denote by $%
\iota _{k}:X_{k}\rightarrow \mathbb{NG}$ the canonical inclusion. Here and
below we denote by $\overline{\iota }_{k}$ the unique triple morphism from $%
\mathcal{T}_{u}\left( X_{k}\right) $ to $\mathcal{T}_{u}\left( \mathbb{NG}%
\right) $ that \textquotedblleft lifts\textquotedblright\ $\iota _{k}$, in
the sense that $\overline{\iota }_{k}\circ u_{X_{k}}=u_{\mathbb{NG}}\circ
\iota _{k}$. Observe that the set of elements of the form $\overline{\iota }%
_{k}(z)$ for $k\in \mathbb{N}$ and $z\in \mathcal{T}_{u}\left( X_{k}\right) $
is dense in $\mathbb{NG}$. Suppose that $\overline{\iota }_{k}(z)$ is such
an element. Fix $\delta >0$ small enough. As observed above, $\mathcal{T}%
_{u}\left( X_{k}\right) $ has a faithful family of finite-dimensional
representations. Therefore there exist $d\in \mathbb{N}$ and a triple
momorphism $\pi :\mathcal{T}_{u}\left( X_{k}\right) \rightarrow M_{d}$ such
that $\pi \circ u_{X_{k}}$ is a complete isometry and $\left\Vert \pi
(z)\right\Vert \geq \left( 1-\delta \right) \left\Vert z\right\Vert $. Set $%
\theta :=\pi \circ u_{X_{k}}:X_{k}\rightarrow M_{d}$, and observe that $\pi $
is the unique triple momorphism from $\mathcal{T}_{u}\left( X_{k}\right) $
to $M_{d}$ such that $\pi \circ u_{X_{k}}=\theta $. By the homogeneity
property of $\mathbb{NG}$, there is a unital complete isometry $\eta
:M_{d}\rightarrow \mathbb{GS}$ such that $\left\Vert \eta \circ \theta
-\iota _{k}\right\Vert _{cb}\leq \delta $. Observe that, for $\delta $ small
enough, we have $\left\Vert \left( \overline{\eta \circ \theta }\right) (z)-%
\overline{\iota }_{k}(z)\right\Vert \leq \varepsilon $. By Proposition \ref%
{Proposition:nonunitalCE} there is a triple momorphism $\mu :\mathcal{T}%
\left( \eta \left[ M_{d}\right] \right) \subset \mathcal{T}_{u}\left( 
\mathbb{\mathbb{NG}}\right) \rightarrow M_{d}$ such that $\mu \circ \eta
=id_{M_{d}}$. Observe now that $\mu \circ \sigma _{\mathbb{\mathbb{NG}}%
}\circ \overline{\eta \circ \theta }:\mathcal{T}_{u}\left( X_{k}\right)
\rightarrow Y$ is a $\ast $-homomorphism such that%
\begin{equation*}
\mu \circ \sigma _{\mathbb{\mathbb{NG}}}\circ \overline{\eta \circ \theta }%
\circ u_{X_{k}}=\mu \circ \sigma _{\mathbb{\mathbb{NG}}}\circ u_{\mathbb{%
\mathbb{NG}}}\circ \eta \circ \theta =\mu \circ \eta \circ \theta =\theta 
\text{.}
\end{equation*}%
Therefore $\mu \circ \sigma _{\mathbb{\mathbb{NG}}}\circ \overline{\eta
\circ \theta }=\pi $. Hence we have that%
\begin{equation*}
\left\Vert \sigma _{\mathbb{\mathbb{NG}}}\left( \overline{\iota }%
_{k}(z)\right) \right\Vert \geq \left\Vert \left( \sigma _{\mathbb{\mathbb{NG%
}}}\circ \overline{\eta \circ \theta }\right) (z)\right\Vert -\varepsilon
\geq \left\Vert \left( \mu \circ \sigma _{\mathbb{\mathbb{NG}}}\circ 
\overline{\eta \circ \theta }\right) (z)\right\Vert -\varepsilon =\left\Vert
\pi (z)\right\Vert -\varepsilon \geq \left\Vert \overline{\iota }%
_{k}(z)\right\Vert -2\varepsilon \text{.}
\end{equation*}%
This concludes the proof that $\sigma $ is injective.
\end{proof}

\begin{corollary}
\label{Corollary:non-embed}The noncommutative Gurarij space $\mathbb{NG}$
does not admit any unital completely isometric embedding into an exact
C*-algebra. Moreover if $\mathbb{NG}\subset B\left( H\right) $ is a unital
completely isometric representation of $\mathbb{NG}$ then the TRO generated
by $\mathbb{NG}$ inside $B\left( H\right) $ is isomorphic to $\mathcal{T}%
_{e}\left( \mathbb{NG}\right) $.
\end{corollary}

\begin{proof}
The second assertion is an immediate consequence of Theorem \ref%
{Theorem:universalNG}. We now prove the first assertion. As recalled above,
a TRO is exact if and only if it is $1$-exact as an operator space, if and
only if its linking C*-algebra is exact \cite[Theorem 4.4]{kaur_local_2002}.
Moreover a (surjective) triple morphism between TROs induces a (surjective) $%
\ast $-homomorphism between the corresponding linking algebras. Since the
class of exact C*-algebras is closed under quotients \cite[Corollary 9.4.3]%
{brown_C*-algebras_2008}, it follows that the image of an exact TRO under a
triple morphism is exact. Observe now that if $X$ is an operator system, $%
C_{u}^{\ast }\left( X\right) $ is its universal C*-algebra as defined in 
\cite[\S 3]{kirchberg_c*-algebras_1998}, and $\mathcal{T}_{u}\left( X\right) 
$ is the triple envelope of $X$, then the universal property of $\mathcal{T}%
_{u}\left( X\right) $ implies the existence of a surjective triple morphism
from $\mathcal{T}_{u}\left( X\right) $ to $C_{u}^{\ast }\left( X\right) $.
Therefore if $C_{u}^{\ast }\left( X\right) $ is not exact, then $\mathcal{T}%
_{u}\left( X\right) $ is not exact as well. Since the universal C*-algebra
of $M_{2}\left( \mathbb{C}\right) $ is not exact \cite[Section 5]%
{kirchberg_c*-algebras_1998}, it follows that the universal TRO of $%
M_{2}\left( \mathbb{C}\right) $ is not exact. Since $M_{2}\left( \mathbb{C}%
\right) $ embeds completely isometrically into $\mathbb{NG}$, it follows
from Lemma \ref{Lemma:lift} that $\mathcal{T}_{u}\left( M_{2}\left( \mathbb{C%
}\right) \right) $ embeds as a subTRO of $\mathcal{T}_{u}\left( \mathbb{NG}%
\right) \cong \mathcal{T}_{e}\left( \mathbb{NG}\right) $. Therefore $%
\mathcal{T}_{e}\left( \mathbb{NG}\right) $ is not exact. Now, if $\mathbb{NG}
$ embeds completely isometrically into a TRO $V$, then the universal
property of $\mathcal{T}_{e}\left( \mathbb{NG}\right) $ implies that $%
\mathcal{T}_{e}\left( \mathbb{NG}\right) $ is the image of under a triple
morphism of the subTRO of $V$ generated by (the image of) $\mathbb{NG}$.
Hence $V$ is not exact.
\end{proof}

\begin{corollary}
\label{Corollary:nonisomorphic}The noncommutative Gurarij space $\mathbb{NG}$
is not completely isometric to a TRO or a C*-algebra.
\end{corollary}

\subsection{Characterization of the noncommutative Gurarij space\label%
{Subsection:characterization-NG}}

Theorem \ref{Theorem:universalNG} asserts that the canonical triple morphism 
$\sigma _{\mathbb{NG}}:\mathcal{T}_{u}\left( \mathbb{NG}\right) \rightarrow 
\mathcal{T}_{e}\left( \mathbb{NG}\right) $ is injective. We will now prove
that such a property characterizes $\mathbb{NG}$ among the separable nuclear
operator spaces of dimension at least $1$.

\begin{theorem}
\label{Theorem:characterization-NG}If $X$ is a separable nuclear operator
space of dimension at least $1$ such that $\sigma _{X}:\mathcal{T}_{u}\left(
X\right) \rightarrow \mathcal{T}_{e}\left( X\right) $ is injective, then $X$
is completely isometrically isomorphic to $\mathbb{NG}$.
\end{theorem}

The following notion as been consider in \cite{junge_OL_2003}: an operator
space $X$ is a \emph{rigid rectangular }$\mathcal{OL}_{\infty ,1+}$ space if
for any finite subset $F$ of $X$ there exist a finite-dimensional TRO $V$
and a linear complete isometry $\phi :V\rightarrow F$ such that any element
of $F$ is at distance at most $\varepsilon $ from the range of $\phi $. Any
rigid $\mathcal{OL}_{\infty ,1+}$ space is, in particular, a rigid
rectangular $\mathcal{OL}_{\infty ,+1}$ space. We have observed in Remark %
\ref{Remark:limit-NG} that $\mathbb{NG}$ is a rigid $\mathcal{OL}_{\infty
,+1}$ space. Using Proposition \ref{Proposition: approximate amalgamation
1-exact} together with an approximate intertwining argument one obtains the
following characterization of $\mathbb{NG}$ among the separable rigid
rectangular $\mathcal{OL}_{\infty ,1+}$ spaces.

\begin{lemma}
\label{Lemma:characterization}Fix $n\in \mathbb{N}$. Suppose that $X$ is a
separable rigid $\mathcal{OL}_{\infty ,1+}$ operator space containing an
isometric copy of $M_{n}$. Assume that, if $d\in \mathbb{N}$, $V$ is a
finite-dimensional TRO containing a completely isometric copy of $M_{n}$, $%
\phi :V\rightarrow M_{d}$ and $f:V\rightarrow X$ are complete isometries,
and $\varepsilon >0$, then there exists an injective complete contraction $%
\widehat{f}:M_{d}\rightarrow X$ such that $||\widehat{f}^{-1}||_{cb}{}<1+%
\varepsilon $ and $||\widehat{f}\circ \phi -f||{}<\varepsilon $. Then $X$ is
completely isometrically isomorphic to $\mathbb{\mathbb{NG}}$.
\end{lemma}

Let $X$ be a nuclear operator space. Since $X$ is nuclear, its second dual $%
X^{\ast \ast }$ is an injective operator space \cite[Theorem 4.5]%
{effros_injectivity_2001}, and hence a dual TRO \cite[Proposition 3.2]%
{effros_injectivity_2001}. We will identify $X$ with its image under the
canonical embedding inside $X^{\ast \ast }$. The same argument as \cite[%
Proposition 10]{kirchberg_c*-algebras_1998}, where one replaces \cite[Lemma 6%
]{kirchberg_c*-algebras_1998} with\ Proposition \ref{Proposition:nonunitalCE}%
, shows that one can identify $\mathcal{T}_{e}\left( X\right) $ with the
subTRO of $X^{\ast \ast }$ generated by $X$.

As mentioned in \S \ref{Subsection:triple-Gn}, we denote by $\mathcal{R}%
\left( X^{\ast \ast }\right) $ the linking von\ Neumann algebra of $X^{\ast
\ast }$.\ Then there is a projection $e\in \mathcal{R}\left( X^{\ast \ast
}\right) $ with central cover $1$ such that $X^{\ast \ast }=e\mathcal{R}%
\left( X^{\ast \ast }\right) e^{\bot }$, where $e^{\bot }=1-e$. We denote by 
$\mathcal{R}\left( X^{\ast \ast }\right) _{I_{<\infty }}$ the sum of the
type $I_{n}$ parts of $\mathcal{R}\left( X^{\ast \ast }\right) $ for $n\in 
\mathbb{N}$. Let $z_{I_{<\infty }}$ be the corresponding central projection,
and $e_{I_{<\infty }}=ez_{I_{<\infty }}$. We abbreviate $e_{I_{<\infty }}%
\mathcal{R}\left( X^{\ast \ast }\right) e_{I_{<\infty }}^{\bot }$ by $%
X_{I_{<\infty }}^{\ast \ast }$. It is showed in \cite{ruan_type_2004} that
the type decomposition of von Neumann algebras can be generalized to dual
TROs. Referring to such a type decomposition, $X_{I_{<\infty }}^{\ast \ast }$
is the sum of the type $I_{n,m}$ parts of $X^{\ast \ast }$ for $n,m\in 
\mathbb{N}$.

It will be crucial in the following a lifting theorem of Effros and Ruan for
quotients of operator spaces by $M$\emph{-ideals}. The theory of $M$-ideals
for Banach spaces was initiated by Alfsen and Effros in the influential
papers \cite{alfsen_structure_1972-1,alfsen_structure_1972-2}. The
generalization of this theory to operator spaces has been initially
considered by Effros and Ruan \cite{effros_mapping_1994}, where \emph{%
complete }$M$-\emph{ideals }are introduced. A subspace $J$ of an operator
space $X$ is a complete $M$-ideal if the weak*-closure $L$ of $J$ inside $%
X^{\ast \ast }$ is a complete $L$-summand. This means that there exists a
completely contractive projection $P$ of $X^{\ast \ast }$ onto $L$ such that 
$||x||{}=\max \left\{ ||P(x)||,\left\vert \left\vert x-P^{(n)}(x)\right\vert
\right\vert \right\} $ for every $n\in \mathbb{N}$ and $x\in M_{n}\left(
X^{\ast \ast }\right) $.

A one-sided theory that studies left $M$-ideals and right $M$-ideals in
operator spaces has later been developed \cite%
{blecher_one-sided_2004-2,blecher_one-sided_2004-1,blecher_multipliers_2001,blecher_multiplier_2004, blecher_calculus_2006,bohle_k-theory_2011,bohle_universal_2014,bohle_k-theoretic_2015}%
. In the following we will only consider (bilateral) complete $M$-ideals in
operator spaces. These coincide with the subspaces that are simultaneously
left $M$-ideals and right $M$-ideals. A subspace $X$ of a TRO $V$ is called
a \emph{triple ideal }if it is a $V^{\star }V$-$VV^{\star }$-bimodule \cite[%
\S 8.3.1]{blecher_operator_2004}. The complete $M$-ideals of a TRO $V$ are
precisely the triple ideals, and any $M$-ideal is automatically a complete $%
M $-ideal \cite[\S 8.5.20]{blecher_operator_2004}. The proof of \cite[%
Theorem 5.2]{effros_mapping_1994} starting from \cite[Lemma 5.1]%
{effros_mapping_1994} can be easily adapted to show the following slightly
more general statement.\ Recall that an operator space $X$ satisfies the
operator metric approximation property if the identity map of $X$ is the
pointwise limit of finite-rank completely contractive maps from $X$ to $X$.

\begin{theorem}[Effros-Ruan]
\label{Theorem:lifting}Suppose that $A$ is an operator space that satisfies
the operator metric approximation property, $F\subset A$ is a
finite-dimensional subspace, $Z,X$ are operator spaces, and $P:Z\rightarrow
X $ is a complete quotient mapping whose kernel is a complete $M$-ideal of $%
Z $. If $\phi :A\rightarrow X$ and $\psi :A\rightarrow Z$ are complete
contractions such that $||\left( P\circ \psi -\phi \right) |_{F}||{}<\delta $%
, then there exists a complete contraction $\tilde{\psi}:A\rightarrow Z$
such that $P\circ \widetilde{\psi }=\phi $ and $||\tilde{\psi}|_{F}-\psi
||<6\varepsilon $.
\end{theorem}

From now until the end of the section we will assume that $X$ is a nuclear
operator space such that the map $\sigma _{X}:\mathcal{T}_{u}\left( X\right)
\rightarrow \mathcal{T}_{e}\left( X\right) $ is injective. In particular
this implies that for any completely isometric inclusion $X\subset V$ inside
a TRO $V$ such that $X$ generates $V$ as a TRO, the canonical morphisms from 
$\mathcal{T}_{u}\left( X\right) $ to $V$ and from $V$ to $\mathcal{T}%
_{u}\left( X\right) $ are both isomorphism. We will denote $\mathcal{T}%
_{u}\left( X\right) \cong \mathcal{T}_{e}\left( X\right) $ simply by $%
\mathcal{T}\left( X\right) $, and will identify it with the subTRO of $%
X^{\ast \ast }$ generated by $X$. Matrix convexity is a noncommutative
generalization of the usual theory of convexity developed in \cite%
{effros_aspects_1978,wittstock_matrix_1984,effros_matrix_1997,effros_matrix_2009,winkler_non-commutative_1999,webster_krein-milman_1999,farenick_extremal_2000,farenick_pure_2004}%
. Compact matrix convex sets provide the natural geometric counterpart of
operator systems, generalizing the classical correspondence between compact
convex sets and function system. It is proved in \cite[Proposition 3.5]%
{webster_krein-milman_1999} that compact matrix convex sets are in natural
1:1 correspondence with matrix state spaces of operator systems. On the
operator space side, it is natural to consider matrix convex sets where not
necessarily square matrices are considered. These are introduced and studied
in \cite{lupini_rectangular_2016} under the name of \emph{compact
rectangular convex sets}. It is proved there that any compact rectangular
convex set arises as the space of completely contractive maps from an
operator space to $M_{n,m}$ for $n,m\in \mathbb{N}$. The natural analog of
the Krein-Milman theorem in this context is also proved in \cite%
{lupini_rectangular_2016}. In the proo of Lemma \ref{Lemma:typeI}, we will
use a consequence of such a fact as stated in \cite[Corollary 1.10]%
{lupini_rectangular_2016}.

\begin{lemma}
\label{Lemma:typeI}The map $x\mapsto e_{I_{<\infty }}xe_{I_{<\infty }}^{\bot
}$ from $X$ to $X_{I_{<\infty }}^{\ast \ast }$ is a complete isometry.
\end{lemma}

\begin{proof}
Suppose that $n,m\in \mathbb{N}$ and $x_{0}\in M_{n,m}\left( X\right) $ is
such that $||x_{0}||{}=1$. By \cite[Corollary 1.10]{lupini_rectangular_2016}
there exists completely contractive map $\phi :X\rightarrow M_{n,m}$ such
that $||\phi ^{(n,m)}\left( x_{0}\right) ||{}=1$ and that is a rectangular
extreme point of the space of completely contractive maps from $X$ into
matrix algebras. In particular, $\phi $ can not be written as the direct sum
of completely contractive maps $\psi _{i}:X\rightarrow M_{n_{i},m_{i}}$ for $%
n_{i}\leq n$ and $m_{i}\leq m$. Since $\mathcal{T}_{u}\left( X\right) =%
\mathcal{T}\left( X\right) $ is the subTRO of $X^{\ast \ast }$ generated by $%
X$, there exists a triple morphism $\pi :\mathcal{T}\left( X\right)
\rightarrow M_{n,m}$ that extends $\phi $. Consider now the second dual $\pi
^{\ast \ast }:\mathcal{T}\left( X\right) ^{\ast \ast }\rightarrow M_{n,m}$.
We have that $X\subset \mathcal{T}\left( X\right) \subset X^{\ast \ast
}\subset \mathcal{T}\left( X\right) ^{\ast \ast }$. We can consider the
restriction $\rho $ of $\pi ^{\ast \ast }$ to $X^{\ast \ast }$. Observe that 
$\rho $ is a normal representation of $X^{\ast \ast }$ that extends $\phi $.
Since $\phi $ is a rectangular extreme point, $\rho $ is irreducible.
Therefore we have that $||(I_{n,m}\otimes e_{I_{<\infty }})x(I_{n,m}\otimes
e_{I_{<\infty }}^{\bot })||{}\geq ||\rho ^{(n,m)}(x_{0})||{}=1$. This
concludes the proof.
\end{proof}

\begin{lemma}
\label{Lemma:rigid}The operator space $X$ is a rigid rectangular $\mathcal{OL%
}_{\infty ,1+}$ space.
\end{lemma}

\begin{proof}
Suppose that $E\subset X$ is a finite-dimensional subspace. Since the map $%
x\mapsto e_{I_{<\infty }}xe_{I_{<\infty }}^{\bot }$ from $X$ to $%
e_{I_{<\infty }}A\left( X^{\ast \ast }\right) e_{I_{<\infty }}^{\bot }$ is
completely isometric, there exists a finite rank central projection $q\in
A\left( X^{\ast \ast }\right) $ with the following property. Let $\pi
:X^{\ast \ast }\rightarrow X^{\ast \ast }$ the triple morphism $x\mapsto qx$%
. Then $\pi |_{E}$ is injective and $||\pi |_{E}^{-1}||_{cb}<1+\delta $.
Observe that, since $q$ has finite rank, $qX^{\ast \ast }=qX$ is a
finite-dimensional TRO. Let $J$ be the kernel of $\pi |_{X}$, which is a
complete $M$-ideal of $X$. Indeed the w*-closure of $X$ inside $X^{\ast \ast
}$ is the kernel of $\pi $, which is a complete $M$-summand of $X^{\ast \ast
}$ since $q$ is a central projection. Therefore by Theorem \ref%
{Theorem:lifting} there exists a complete isometry $\rho :qX\rightarrow X$
such that $\pi \circ \rho =id_{X}$ and $||\rho \circ \pi -id_{E}||{}<6\delta 
$.
\end{proof}

\begin{lemma}
\label{Lemma:lift-universal}Suppose that $E$ is a finite-dimensional
operator space, $W$ is a finite-dimensional TRO, $\eta :E\rightarrow W$ is
an injective complete contraction such that the range of $\eta $ generates $%
W $ as a TRO, and $\varepsilon ,\delta >0$. If $||\eta ^{-1}||_{cb}<1+\delta 
$ and $f:E\rightarrow X$ is a complete isometry, then there exists a
complete contraction $g:W\rightarrow X$ such that $||g\circ \eta
-f||{}<6\delta $ and $||g^{-1}||_{cb}<\varepsilon $.
\end{lemma}

\begin{proof}
By the universal property of the universal triple cover, there exists a
surjective triple morphism $P:\mathcal{T}_{u}\left( E\right) \rightarrow W$
such that $P\circ u_{E}=\eta $. The kernel of $P$ is a complete $M$-ideal of 
$\mathcal{T}_{u}\left( E\right) $. Therefore by Theorem \ref{Theorem:lifting}
there exists a completely isometric map $\phi :W\rightarrow \mathcal{T}%
_{u}\left( E\right) $ such that $P\circ \phi $ is the identity of $W$ and $%
||P\circ u_{E}-\phi \circ \eta ||{}<6\delta $. Suppose now that the linear
complete isometry $\overline{f}:\mathcal{T}_{u}\left( E\right) \rightarrow 
\mathcal{T}_{u}\left( X\right) =\mathcal{T}\left( X\right) $ is obtained
from $f$ as in Lemma \ref{Lemma:lift}. Then $\overline{f}\circ \phi
:W\rightarrow \mathcal{T}\left( X\right) $ is a linear complete isometry
such that $||\overline{f}\circ \phi \circ \eta -f|_{E}||{}<6\delta $. One
can then proceed as in Lemma \ref{Lemma:rigid} to obtain a finite rank
central projection $q\in A\left( X^{\ast \ast }\right) $ and a complete
isometry $\rho :qX\rightarrow X$ such that the following holds: if $\pi
:X\rightarrow qX=qX^{\ast \ast }$ is the map $x\mapsto qx$, and $g=\rho
\circ \pi \circ \overline{f}\circ \phi $, then $g$ is an injective complete
contraction with $||g^{-1}||_{cb}{}<\varepsilon $ such that $||g\circ \eta
-f||{}<6\delta $. This concludes the proof.
\end{proof}

The same proof as Lemma \ref{Lemma:lift-universal} with the extra ingredient
of Proposition \ref{Proposition:perturbation} shows that, if $\delta $ is
obtained from $\varepsilon $ and $W$ as in Proposition \ref%
{Proposition:perturbation}, then one can find a completely isometric $%
g:W\rightarrow X$ such that $||g\circ \eta -f||{}<6\delta $.

It is easy to see that for any finite-dimensional nonzero TRO $V$ one has
that $\mathcal{T}_{u}\left( V\right) $ is infinite-dimensional; see\ \cite[%
\S 5.1]{kirchberg_c*-algebras_1998}. This fact together with Lemma \ref%
{Lemma:rigid} shows that $X$ is infinite-dimensional. In particular, since $%
X $ is a rigid rectangular $\mathcal{OL}_{\infty ,1+}$ space by Lemma \ref%
{Lemma:rigid}, it contains a completely isometric copy of the unital
C*-algebra $V=\mathbb{C}\oplus \mathbb{C}\oplus \mathbb{C}$. Since the
C*-envelope of $V$ is not exact \cite[\S 5.4]{kirchberg_c*-algebras_1998},
it follows that $\mathcal{T}_{u}\left( V\right) $ is not exact as well.
Since $V\subset X$ completely isometrically, $\mathcal{T}_{u}\left( V\right)
\subset \mathcal{T}_{u}\left( X\right) $ and hence $\mathcal{T}_{u}\left(
X\right) $ is not exact. Thus the proof of Lemma \ref{Lemma:typeI} shows
that $X$ contains a completely isometric copy of $M_{n}$ for every $n\in 
\mathbb{N}$. We now verify that $X$ satisfies the assumptions of Lemma \ref%
{Lemma:characterization} for $n=3$.

If $r,s\in \mathbb{N}$ we denote by $e_{ij}$ the matrix units of $M_{r,s}$.
We canonically identify $M_{n_{1},m_{1}}\oplus M_{n_{2},m_{2}}$ with a
subspace of $M_{n_{1}+n_{2},m_{1}+m_{2}}$, via the usual identification of $%
B\left( H_{1},K_{1}\right) \oplus ^{\infty }B\left( H_{2},K_{2}\right) $
with $B\left( H_{1}\oplus H_{2},K_{1}\oplus K_{2}\right) $ for Hilbert
spaces $H_{1},H_{2},K_{1},K_{2}$. A similar convention will apply to sums
with more addends. It follows from Proposition \ref{Proposition:nonunitalCE}
and \cite[Lemma 3.2.4]{bohle_k-theory_2011} that if $n:=n_{1}+\cdots
+n_{\ell }$, $m:=m_{1}+\cdots +m_{\ell }$, and $\phi :M_{n_{1},m_{1}}\oplus
^{\infty }\cdots \oplus ^{\infty }M_{n_{\ell },m_{\ell }}\rightarrow M_{r,s}$
is a linear complete isometry, then $r\geq n$, $s\geq m$, and $\phi $ has
the form $x\mapsto u\left( x\oplus \psi (x)\right) v$, where $v\in M_{s}$
and $u\in M_{r}$ are unitaries, and $\psi :M_{n_{1},m_{1}}\oplus ^{\infty
}\cdots \oplus ^{\infty }M_{n_{\ell },m_{\ell }}\rightarrow M_{r-n,s-m}$ is
a complete contraction.

Suppose that $V$ is a finite-dimensional TRO containing an isometric copy of 
$M_{3}$, $d\in \mathbb{N}$, $\phi :V\rightarrow M_{d}$ and $f:V\rightarrow X$
are complete isometries, and $\varepsilon >0$. Without loss of generality we
can assume that $\varepsilon \in \left( 0,1\right) $. Set $\delta
:=\varepsilon /14$. After replacing $V$ with its image inside $M_{d}$, we
can assume that $V\subset M_{d}$. Our goal is to prove that there exists an
injective complete contraction $g:M_{d}\rightarrow X$ such that $||g\circ
\eta -d||{}<\varepsilon $ and $||g^{-1}||_{cb}<\varepsilon $. If $d=3$, then 
$V=M_{d}$ and there is nothing to prove, so we can assume that $d\geq 4$.
Let $\iota :V\rightarrow M_{d}$ be the inclusion map. After replacing $V$
with $uVw$ for some unitaries $u,w\in M_{d}$ we can assume that $V$ contains
the matrix units $e_{ij}$ for $1\leq i,j\leq 3$. Fix an element $a\in M_{d}$
such that $C^{\ast }\left( a\right) =span\left\{ e_{ij}:3\leq i,j\leq
d\right\} $, and $t>0$. Let $J=e_{33}+\cdots +e_{dd}$ and $L=e_{14}+\cdots
+e_{1d}$. Define $\eta _{t}:V\rightarrow M_{d}$ by setting $\eta _{t}\left(
e_{12}\right) =e_{12}+tJ$, $\eta _{t}\left( e_{21}\right) =e_{21}+ta$, $\eta
\left( e_{33}\right) =e_{33}+tL$, and $\eta _{t}\left( e_{ij}\right) =e_{ij}$
whenever $1\leq i,j\leq d$ and $\left( i,j\right) \notin \left\{ \left(
1,2\right) ,\left( 2,1\right) ,\left( 3,3\right) \right\} $. Let $T$ be the
subTRO of $M_{d}$ generated by the range of $\eta _{t}$. Observe that $%
T=M_{d}$. Indeed by the choice of $a$ and $J$ and the definition of $\eta
_{t}$ at $e_{12}$ and $e_{21}$ we have that for any $b\in span\left\{
e_{ij}:3\leq i,j\leq d\right\} $ there exist $\lambda ,\mu \in \mathbb{C}$
such that $\lambda e_{12}+\mu e_{21}+b\in T$. Since $e_{11},e_{22}\in T$ we
conclude that $e_{12},e_{21}\in T$ and hence $span\left\{ e_{ij}:3\leq
i,j\leq d\right\} \subset T$. In particular $e_{jj}\in T$ for every $1\leq
j\leq d$. Therefore for $4\leq j\leq d$ we have $e_{1j}=t^{-1}e_{11}\eta
_{t}\left( e_{33}\right) e_{jj}\in T$ and $e_{2j}=e_{21}e_{11}e_{1j}\in T$.
This concludes the proof that $T=M_{d}$.

Fix $t>0$ such that $\left\Vert \eta _{t}-\iota \right\Vert _{cb}<\delta $.
Let now $\eta =\frac{1}{1+\delta }\eta _{t}$ and observe that $\eta $ is an
injective complete contraction such that $\left\Vert \eta ^{-1}\right\Vert
_{cb}\leq \frac{1+\delta }{1-\delta }<1+2\delta $ and $\left\Vert \eta
-\iota \right\Vert <2\delta $. Therefore by Lemma \ref{Lemma:lift-universal}
there exists a completely contractive map $g:M_{d}\rightarrow X$ such that $%
\left\Vert g\circ \eta -f\right\Vert <12\delta $ and $\left\Vert
g^{-1}\right\Vert _{cb}<\varepsilon $. As a consequence we also have $%
\left\Vert g\circ \eta _{t}-f\right\Vert <14\delta =\varepsilon $. This
concludes the proof that $X$ is completely isometric to $\mathbb{NG}$.

\subsection{The triple envelope of $\mathbb{G}_{n}$\label%
{Subsection:triple-Gn}}

In this sections we consider $M_{n}$-spaces as operator spaces endowed with
their canonical minimal operator space structure. An $M_{n}$-space is a
rigid rectangular $\mathcal{OL}_{\infty ,1+}$ space if and only if for any
finite subset $F$ of $X$ there exist a finite-dimensional $n$-minimal TRO $V$
and a linear complete isometry $\phi :V\rightarrow F$ such that any element
of $F$ is at distance at most $\varepsilon $ from the range of $\phi $. In
particular, as pointed out in \cite{junge_OL_2003}, a Banach space is a
rigid (rectangular) $\mathcal{OL}_{\infty ,1+}$ space if and only if it is a
Lindenstrauss space.

It follows from Hamana's proof of the existence of the triple envelope \cite%
{hamana_triple_1999} that, if $X$ is an operator space and $I\left( X\right) 
$ is its injective envelope, then $\mathcal{T}_{e}\left( X\right) $ is the
subTRO of $I\left( X\right) $ generated by $X$; see also \cite[\S 4.4.2]%
{blecher_operator_2004}. (An injective operator space has a canonical
operator space structure, since the range of a complete contractive
projection of $B\left( H\right) $ is a TRO \cite[Theorem 4.4.9]%
{blecher_operator_2004}.) If $X$ is a $M_{n}$-space, then $X\subset \ell
^{\infty }\left( \kappa ,M_{n}\right) $ for some index set $\kappa $, and
the latter space is injective. Therefore $\mathcal{T}_{e}\left( X\right)
\subset I\left( X\right) \subset \ell ^{\infty }\left( \kappa ,M_{n}\right) $%
, and hence $\mathcal{T}_{e}\left( X\right) $ is an $n$-minimal TRO.

The universal triple envelope $\mathcal{T}_{u}\left( X\right) $ of an $M_{n}$%
-space is in general not an $M_{n}$-space.\ Indeed as observed above it is
nonexact whenever $X$ is a nonzero finite-dimensional TRO. However one can
consider the $n$-minimal analog of the universal triple envelope.

\begin{definition}
The (necessarily unique) $n$-minimal universal TRO $\mathcal{T}%
_{u}^{n}\left( X\right) $ of an $M_{n}$-space $X$ is a pair $\left(
u_{X}^{n},\mathcal{T}_{u}^{n}\left( X\right) \right) $ where $\mathcal{T}%
_{u}^{n}\left( X\right) $ is an $n$-minimal TRO and $u_{X}^{n}:X\rightarrow 
\mathcal{T}_{u}^{n}\left( X\right) $ is a complete isometry such that the
image of $u_{X}^{n}$ generates $\mathcal{T}_{u}^{n}\left( X\right) $ as a
TRO, satisfying the following universal property: whenever $\phi $ is a
complete contraction from $X$ to an $n$-minimal TRO $W$, there exists a
(necessarily unique) triple morphism $\theta :\mathcal{T}_{u}^{n}\left(
X\right) \rightarrow Z$ such that $\theta \circ u_{X}^{n}=\phi $.
\end{definition}

The same argument as for the existence of $\mathcal{T}_{u}\left( X\right) $
shows that the $n$-minimal universal TRO exists. Indeed let $\mathcal{F}^{n}$
be the collection of all the completely contractive maps $\phi :X\rightarrow
M_{n}$, and $W$ be the $\infty $-sum of copies of $M_{n}$ indexed by $%
\mathcal{F}^{n}$. One can then define $u_{X}^{n}:X\rightarrow W$ to be the
map $x\mapsto \left( s(x)\right) _{s\in \mathcal{F}^{n}}$, and $\mathcal{T}%
_{u}^{n}\left( X\right) $ to be the subTRO of $W$ generated by the range of $%
u_{X}^{n}$.

The universality property of $\mathcal{T}_{u}\left( X\right) $ shows that
there exists a unique surjective triple morphism $\sigma _{X}^{n}:\mathcal{T}%
_{u}^{n}\left( X\right) \rightarrow \mathcal{T}_{e}\left( X\right) $. The
arguments from \S \ref{Subsection:triple-NG} and \S \ref%
{Subsection:characterization-NG} can be easily adapted to prove the
following characterization of $\mathbb{G}_{n}$.

\begin{theorem}
\label{Theorem:characterization-Gn}For every $n\in \mathbb{N}$, $\mathbb{G}%
_{n}^{u}$ is the unique separable nuclear $M_{n}$-space of dimension at
least $1$ with the property that the canonical map $\sigma _{\mathbb{G}_{n}}:%
\mathcal{T}_{u}^{n}\left( \mathbb{G}_{n}\right) \rightarrow \mathcal{T}%
_{e}\left( \mathbb{G}_{n}\right) $ is injective.
\end{theorem}

In particular, when $n=1$,\ Theorem \ref{Theorem:characterization-Gn}
provide the following new characterization of the Gurarij Banach space $%
\mathbb{G}$. Recall that a TRO is $1$-minimal if and only if it is a
commutative TRO in the sense of \cite[Proposition 8.6.5]%
{blecher_operator_2004}, and a Banach space is a nuclear when endowed with
its canonical minimal operator space structure if and only if it is a
Lindenstrauss space \cite[\S 8.6.4]{blecher_operator_2004}.

\begin{corollary}
\label{Corollary:characterization-G}The Gurarij Banach space $\mathbb{G}$ is
the unique separable Lindenstrauss space of dimension at least $1$ with the
property that the canonical map $\sigma _{\mathbb{G}}^{1}$ from the
universal commutative TRO $\mathcal{T}_{u}^{1}\left( \mathbb{G}\right) $ to
the triple envelope $\mathcal{T}_{e}\left( \mathbb{G}\right) $ is injective.
\end{corollary}

\subsection{A canonical construction of $\mathbb{\mathbb{NG}}$ and $\mathbb{G%
}_{n}$\label{Subsection:canonical-NG}}

The characterizations of $\mathbb{NG}$ and $\mathbb{G}_{n}$ provided by
Theorem \ref{Theorem:characterization-NG} and Theorem \ref%
{Theorem:characterization-Gn} can be used to provide a new canonical
construction of $\mathbb{\mathbb{NG}}$ and $\mathbb{G}_{n}$ for every $n\in 
\mathbb{N}$. In the particular case $n=1$ this provides a new construction
of the Gurarij space $\mathbb{G}$.

Start with a separable $M_{n}$ space $X_{n}$ of dimension at least $1$ and
then define for $k>n$, $X_{k+1}=\mathcal{T}_{u}^{k+1}\left( X_{k}\right) $.
One obtains an inductive sequence $\left( X_{k}\right) _{k>n}$ of TROs with
completely isometric connective maps $u_{X_{k}}^{k+1}:X_{k}\rightarrow
X_{k+1}$. Let $X_{\infty }$ be the corresponding limit in the category of
operator spaces, and let $\iota _{k}:X_{k}\rightarrow X_{\infty }$ be the
canonical inclusions. Observe that $X_{\infty }$ is a rigid rectangular $%
\mathcal{OL}_{\infty ,1+}$ space and, in particular, nuclear. We will now
show that $\sigma _{X_{\infty }}:\mathcal{T}_{u}\left( X_{\infty }\right)
\rightarrow \mathcal{T}_{e}\left( X_{\infty }\right) $ is injective. In view
of Theorem \ref{Theorem:characterization-NG}, this will prove that $%
X_{\infty }$ is completely isometrically isomorphic to $\mathbb{NG}$. We let 
$\iota _{i,j}:X_{i}\rightarrow X_{j}$ for $j>i$ be $u_{X_{j-1}}^{j}\circ
\cdots \circ u_{X_{i}}^{i+1}$.

We will identify $X_{\infty }$ with a subspace of $\mathcal{T}_{e}\left(
X_{\infty }\right) $. Suppose that $k\geq n$, $z\in \mathcal{T}_{u}\left(
X_{k}\right) $, and $\overline{\iota }_{k}(z)\in \mathcal{T}_{u}\left(
X\right) $. The set of such elements is dense in $\mathcal{T}_{u}\left(
X_{\infty }\right) $. Fix $\varepsilon >0$. Since $\mathcal{T}_{u}\left(
X_{n}\right) $ has a separating family of finite-dimensional
representations, there exists $m>k$ such that $\left\Vert z\right\Vert \leq
||\overline{\iota }_{km}(z)||{}+\varepsilon $, where $\overline{\iota }_{km}:%
\mathcal{T}_{u}\left( X_{k}\right) \rightarrow X_{m}$ is the triple morphism
obtained from $\iota _{km}:X_{k}\rightarrow X_{m}$ by applying the universal
property of the universal TRO. By Proposition \ref{Proposition:nonunitalCE},
there exists a triple morphism $\mu $ from the subTRO $Z$ of $\mathcal{T}%
_{e}\left( X\right) $ generated by the image of $\iota _{m}$ such that $\mu
\circ \iota _{m}=id_{X_{m}}$. We now have%
\begin{equation*}
\mu \circ \sigma _{X_{\infty }}\circ \overline{\iota }_{k}\circ
u_{X_{k}}=\mu \circ \sigma _{X_{\infty }}\circ u_{X_{\infty }}\circ \iota
_{k}=\mu \circ \iota _{k}=\iota _{km}\text{.}
\end{equation*}%
Therefore $\mu \circ \sigma _{X_{\infty }}\circ \overline{\iota }_{k}=%
\overline{\iota }_{km}$. Hence%
\begin{equation*}
\left\Vert \left( \sigma _{X_{\infty }}\circ \overline{\iota }_{k}\right)
(z)\right\Vert \geq \left\Vert \left( \mu \circ \sigma _{X_{\infty }}\circ 
\overline{\iota }_{k}\right) (z)\right\Vert \geq \left\Vert \overline{\iota }%
_{mk}(z)\right\Vert \geq \left\Vert \overline{\iota }_{k}(z)\right\Vert
-\varepsilon \text{.}
\end{equation*}%
This concludes the proof that $\sigma _{X_{\infty }}$ is injective, and
hence $X_{\infty }$ is completely isometrically isomorphic to $\mathbb{NG}$.

One can similarly give a canonical construction of $\mathbb{G}_{n}$. Suppose
that $X$ is any separable $M_{n}$-space. Define recursively $X_{0}:=X$ and $%
X_{k+1}=\mathcal{T}_{u}^{n}\left( X_{k}\right) $. This gives an inductive
sequence $\left( X_{k}\right) _{k\geq 1}$ of $n$-minimal\ TROs with
completely isometric connective maps $u_{X_{k}}^{n}:X_{k}\rightarrow X_{k+1}$%
. Reasoning as before, one can see that the corresponding limit $X_{\infty }$
in the category of operator spaces is completely isometrically isomorphic to 
$\mathbb{G}_{n}$.

\subsection{Universal automorphism group\label{Subsection:universal}}

It is clear from the definition of the universal TRO $\mathcal{T}_{u}\left(
X\right) $ that the assignment $X\mapsto \mathcal{T}_{u}\left( X\right) $ is
a \emph{functor }from the category of operator spaces and completely
contractive maps to the category of TROs and triple morphisms, which maps
completely isometric maps to injective triple morphisms. In particular any
surjective linear complete contraction $\alpha $ of $X$ admits a unique
\textquotedblleft lift\textquotedblright\ $\mathcal{T}_{u}\left( \alpha
\right) $ to a triple automorphism of $\mathcal{T}_{u}\left( X\right) $ with
the property that $\mathcal{T}_{u}\left( \alpha \right) \circ
u_{X}=u_{X}\circ \alpha $.

If $X$ is an operator space, then we denote by $\mathrm{Aut}\left( X\right) $
the group of surjective complete isometries from $X$ to itself. When $X$ is
a TRO, this coincides with the group of triple automorphisms of $X$. We
consider $\mathrm{Aut}\left( X\right) $ as a topological group endowed with
the topology of pointwise convergence. It is easy to see that whenever $X$
is separable, $\mathrm{Aut}\left( X\right) $ is a Polish group. The
assignment $\alpha \mapsto \mathcal{T}_{u}\left( \alpha \right) $ defined
above is an injective group homomorphism from $\mathrm{Aut}\left( X\right) $
to $\mathrm{Aut}\left( \mathcal{T}_{u}\left( X\right) \right) $. Since the
range of $u_{X}$ generates $\mathcal{T}_{u}\left( X\right) $ as a TRO, the
map $\alpha \mapsto \mathcal{T}_{u}\left( \alpha \right) $ is continuous
and, in fact, a homeomorphism onto its image. The same argument applies with
no change to the functor $\mathcal{T}_{u}^{n}$ from the category of $M_{n}$%
-spaces to the category of $n$-minimal TROs.

It follows from these remarks together with the canonical construction of $%
\mathbb{NG}$ from \S \ref{Subsection:canonical-NG} that any separable $M_{n}$%
-space $X$ can be realized completely isometrically as a subspace of $%
\mathbb{NG}$ in such a way that every surjective complete isometry $\alpha $
of $X$ extends to a surjective complete isometry $\widehat{\alpha }$ of $%
\mathbb{NG}$, and the map $\alpha \mapsto \widehat{\alpha }$ is a
topological embedding of Polish groups from $\mathrm{Aut}\left( X\right) $
to $\mathrm{Aut}\left( \mathbb{NG}\right) $. Since any Polish group is
isomorphic to the group of surjective isometries of a Banach space \cite[%
Theorem 2.2.1]{pestov_topological_1999}, it follows in particular that any
Polish group is isomorphic to a closed subgroup of $\mathrm{Aut}\left( 
\mathbb{NG}\right) $.

Similar conclusions apply to the spaces $\mathbb{G}_{n}$. If $X$ is a
separable $M_{n}$-space, then it can be realized completely isometrically as
a subspace of $\mathbb{G}_{n}$ in such a way that every surjective complete
isometry $\alpha $ of $X$ extends to a surjective complete isometry $%
\widehat{\alpha }$ of $\mathbb{G}_{n}$, and the map $\alpha \mapsto \widehat{%
\alpha }$ is a topological embedding of Polish groups from $\mathrm{Aut}%
\left( X\right) $ to $\mathrm{Aut}\left( \mathbb{G}_{n}\right) $. Again,
this entails that the group $\mathrm{Aut}\left( \mathbb{G}_{n}\right) $ of
surjective complete isometries of $\mathbb{G}_{n}$ is a universal Polish
groups. These observations provide the proof of item (6) in Theorem \ref%
{Theorem:main-NG} and Theorem \ref{Theorem:main-G_n}.

\subsection{Perturbation\label{Subsection:perturbation}}

The same proof as \cite[Lemma 7.1]{kerr_gromov-hausdorff_2009} allows one to
deduce from \cite[4.2.8]{blackadar_generalized_1997} the following
perturbation result: for any finite-dimensional C*-algebra $A$ and any $%
\varepsilon >0$, there exists $\delta >0$ (depending on $A$ and $\varepsilon
>0$) such that for any finite-dimensional C*-algebra $B$ and any injective
unital linear map $\phi :A\rightarrow B$ such that $||\phi ||_{cb}{}||\phi
^{-1}||_{cb}<1+\delta $, there exists a unital linear complete isometry $%
\psi :A\rightarrow B$ such that $||\phi -\psi ||{}<\varepsilon $. In this
section, we present the proof of the natural nonunital analog of such a
perturbation result, concerning approximately completely isometric linear
maps between finite-dimensional TROs. This result will be used in \S \ref%
{Subsection:indecomposability} to prove an indecomposability result for the
spaces $\mathbb{G}_{n}$. One can also use Proposition \ref%
{Proposition:perturbation} to give an alternative proof of Theorem \ref%
{Theorem:characterization-NG}.

\begin{proposition}
\label{Proposition:perturbation}Suppose that $L$ is a finite-dimensional $q$%
-minimal TRO and $\varepsilon >0$.\ There exists $\delta $ (depending on $%
\varepsilon $ and $L$) such that whenever $H_{1},K_{1},\ldots ,H_{\ell
},K_{\ell }$ are Hilbert spaces, $V$ is the sum $B\left( H_{1},K_{1}\right)
\oplus ^{\infty }\cdots \oplus ^{\infty }B\left( H_{\ell },K_{\ell }\right) $%
, and $\phi :L\rightarrow V$ is a linear map such that $||\phi ||_{q}||\phi
^{-1}||_{q}<1+\delta $, then there exists a linear complete isometry $\psi
:L\rightarrow V$ such that $||\phi -\psi ||{}<\varepsilon $.
\end{proposition}

\begin{remark}
\label{Remark:limit}It follows from Proposition \ref%
{Proposition:perturbation} that a separable operator space is a rigid
rectangular $\mathcal{OL}_{\infty ,1+}$ space if and only if it can be
realized as the direct limit of a sequence $\left( X_{n}\right) $ of
finite-dimensional TROs with completely isometric connective maps $\phi
_{n}:X_{n}\rightarrow X_{n+1}$. It also follows from Proposition \ref%
{Proposition:perturbation} and the small perturbation lemma \cite[Lemma
2.13.2]{pisier_introduction_2003} that for any $q\in \mathbb{N}$, $%
\varepsilon >0$, and finite-dimensional $q$-minimal TRO $L$, there exists $%
\delta >0$ (depending on $L$ and $\varepsilon $) such that if $\psi
:L\rightarrow B\left( H\right) $ is a linear map such that $||\phi
||_{q}||\phi ^{-1}||_{q}<1+\delta $, then $||\phi ||_{cb}||\phi
^{-1}||_{cb}<1+\varepsilon $.
\end{remark}

The following proof of Proposition \ref{Proposition:perturbation} has been
suggested by Caleb Eckhardt. The proof of \cite[Corollary 3.7]%
{eckhardt_perturbations_2010} can be easily adapted to prove the following:

\begin{lemma}
\label{Lemma:perturbation1}Suppose that $n,m\in \mathbb{N}$, $W$ is a finite 
$\infty $-sum of TROs of the form $B\left( H,K\right) $, and $\varepsilon >0$%
. Then there exists $\delta >0$ such that for any index set $I$, if $\phi
:M_{n,m}\rightarrow \ell ^{\infty }\left( I\right) \otimes W$ is a linear
map such that $||\phi ||_{cb}<1+\delta $ and $||\phi ^{-1}||_{cb}<1+\delta $%
, then there exists $i\in I$ such that the $i$-th coordinate map $\phi _{i}$
satisfies $||\phi _{i}^{-1}||_{cb}<1+\varepsilon $.
\end{lemma}

The proof of the following lemma is similar to the proof of \cite[Theorem 3.2%
]{eckhardt_perturbations_2010} from \cite[Corollary 3.7]%
{eckhardt_perturbations_2010}.

\begin{lemma}
\label{Lemma:perturbation2}Suppose that $n,m\in \mathbb{N}$ and $\varepsilon
>0$. Let $q=\max \left\{ n,m\right\} $. There exists $\delta >0$ such that
whenever $H_{1},K_{1},\ldots ,H_{\ell },K_{\ell }$ are Hilbert spaces, $V$
is the sum $B\left( H_{1},K_{1}\right) \oplus ^{\infty }\cdots \oplus
^{\infty }B\left( H_{\ell },K_{\ell }\right) $, and $\phi =\left( \phi
_{1},\ldots ,\phi _{\ell }\right) :M_{n,m}\rightarrow V$ is an injective
linear map such that $||\phi ||_{q}<1+\delta $ and $||\phi
^{-1}||_{cb}<1+\delta $, then there exist $1\leq i\leq l$, projections $p\in
B\left( H_{i}\right) $ and $q\in B\left( K_{i}\right) $ of rank $n$ and $m$
and a triple isomorphism $\theta :M_{n,m}\rightarrow pB\left(
H_{i},K_{i}\right) q$ such that, if $\psi $ is the map $x\mapsto \theta
(x)+p\phi (x)q$, then $||\phi _{i}-\psi ||{}<\varepsilon $.
\end{lemma}

\begin{proof}
Let $\varepsilon _{0}>0$ be small enough (depending on $\varepsilon ,n,m$).
By compactness there exists $\eta >0$ such that whenever $s,t\leq q$ and $%
f:M_{n,m}\rightarrow M_{s,t}$ is a complete contraction such that $%
||f^{-1}||_{cb}<1+\eta $, then $n\leq s$, $m\leq t$, and there exist
projections $p\in M_{s}$ and $q\in M_{t}$ of rank $n$ and $m$ respectively
and a triple isomorphism $\theta :M_{n,m}\rightarrow pM_{s,t}q$ such that $%
||\theta -p\phi q||{}<\varepsilon _{0}$. We set $H=H_{1}\oplus \cdots \oplus
H_{\ell }$, $K=K_{1}\oplus \cdots \oplus K_{\ell }$. We will identify $H_{i}$
with a subspace of $H$, $K_{i}$ with a subspace of $K$, and $V$ with a
subspace of $B\left( H,K\right) $. Consider the set $I$ of pairs $\left(
p,q\right) $ such that $p\in B\left( H_{i}\right) $ and $q\in B\left(
K_{i}\right) $ for some $i\in \left\{ 1,\ldots ,\ell \right\} $ are
projections of rank at most $\max \left\{ n,m\right\} $. Let $W$ be the $%
\infty $-sum of $pVq$ for $\left( p,q\right) \in I$, and $s:V\rightarrow W$
be the map $y\mapsto \left( pyq\right) _{(p,q)\in I}$. Observe that $S$ is a 
$q$-isometry. It follows from this and Smith's lemma \cite[Proposition 2.2.2]%
{effros_operator_2000} together with Lemma \ref{Lemma:perturbation1} that
choosing $\delta >0$ small enough guarantees that there exists $\left(
p_{0},q_{0}\right) \in I$ such that the completely bounded norm of the
inverse of the map $x\mapsto p\phi (x)q$ is at least $1-\eta $. Suppose that 
$1\leq i\leq \ell $ is such that $p_{0}\in B\left( H_{i}\right) $ and $%
q_{0}\in B\left( K_{i}\right) $. By the choice of $\eta $ this implies that
there exist subprojections $p\leq p_{0}$ and $q\leq q_{0}$ of rank $n$ and $%
m $ and a triple isomorphism $\theta :M_{n,m}\rightarrow pB\left(
H_{i},K_{i}\right) q$ such that $\left\Vert \theta -p\phi q\right\Vert
<\varepsilon _{0}$. It is clear that choosing $\varepsilon _{0}$ small
enough ensures that the conclusion holds.
\end{proof}

\begin{proof}[Proof of Proposition \protect\ref{Proposition:perturbation}]
Let us use the notation from the statement of Proposition \ref%
{Proposition:perturbation}. Let $L=M_{n_{1},m_{1}}\oplus ^{\infty }\cdots
\oplus ^{\infty }M_{n_{k},m_{k}}$. We identify canonically $M_{n_{i},m_{i}}$
with a subTRO of $T$. We let $\phi _{ij}$ for $i=1,2,\ldots ,k$ and $%
j=1,\ldots ,\ell $ be the $j$-th component of the restriction of $\phi $ to $%
M_{n_{i},m_{i}}$. In view of Lemma \ref{Lemma:perturbation2}, up to
replacing $\delta $ with a smaller strictly positive real number and after
perturbing $\phi $ we can assume that for every $i$ there exists $j_{i}$ and
projections $p_{i}\in B\left( H_{j_{i}}\right) $ and $q_{i}\in B\left(
H_{j_{i}}\right) $ such that $\phi _{ij}=\theta _{ij}(x)+p_{i}\phi
_{ij}q_{i} $ for some triple isomorphism $\theta
_{ij}:M_{n_{i}m_{i}}\rightarrow p_{i}B\left( H_{j_{i}},K_{j_{i}}\right)
q_{i} $. Suppose that $i_{0},i_{1}$ are distinct elements of $\left\{
1,2,\ldots ,k\right\} $ such that $j_{i_{1}}=j_{i_{2}}$. We claim that the
corresponding projections $q_{i_{1}},q_{i_{2}}\in B\left( K_{j_{i}}\right) $%
, as well as the projection $p_{i_{1}},p_{i_{2}}\in B\left( H_{j_{i}}\right) 
$, are approximately orthogonal. For simplicity of notation we can assume
that $i_{0}=1$ and $i_{1}=2$. Suppose that $v_{i}\in M_{n_{i},m_{i}}$ is a
partial isometry such that $\theta \left( v_{i}^{\ast }v_{i}\right) \leq
q_{i}$ for $i=1,2$. Consider the element $a$ of $M_{2,1}\left( L\right) $
such that $a_{11}=\left( v_{1},0,\ldots ,0\right) $ and $a_{21}=\left(
0,v_{2},0,\ldots ,0\right) $. Then we have that $\left\Vert a\right\Vert
^{2}=1$ and hence%
\begin{equation*}
\left( 1+\delta \right) ^{2}\geq \left\Vert (id_{M_{2,1}}\otimes S)\left(
a\right) \right\Vert ^{2}\geq \left\Vert \theta \left( v_{1}^{\ast
}v_{1}\right) +\theta \left( v_{2}^{\ast }v_{2}\right) \right\Vert \text{.}
\end{equation*}%
Since this is true for any such pair of partial isometries, we can conclude
that $\left\Vert q_{1}+q_{2}\right\Vert \leq \left( 1+\delta \right) ^{2}$.
A similar argument involving $1\times 2$ matrices rather than $2\times 1$
matrices shows that $\left\Vert p_{1}+p_{2}\right\Vert \leq \left( 1+\delta
\right) ^{2}$. Therefore after further perturbing the map $\phi $---with an
error which is arbitrarily small for $\delta $ small enough---we can assume
that $q_{i_{1}}$ and $q_{i_{2}}$ and $p_{i_{1}}$ and $p_{i_{2}}$ are
pairwise orthogonal whenever $i_{0},i_{1}$ are distinct elements of $\left\{
1,2,\ldots ,k\right\} $ such that $j_{i_{1}}=j_{i_{2}}$. One can then easily
further perturb such a map to get a complete isometry from $L$ to $V$.
\end{proof}

\subsection{Indecomposability\label{Subsection:indecomposability}}

\begin{proposition}
\label{Proposition:indecomposable}Suppose that $X$ and $Y$ are rigid
rectangular $\mathcal{OL}_{\infty ,1+}$-spaces of dimension at least $2$. If 
$n\geq 2$, then $X\otimes Y$ is not completely isometrically isomorphic to $%
\mathbb{G}_{n}$.
\end{proposition}

\begin{proof}
We denote by $\mathbb{E}_{D_{n}}$ the conditional expectation onto the
diagonal subalgebra of $M_{n}$. Suppose by contradiction that $X\otimes Y$
is completely isometrically isomorphic to $\mathbb{G}_{n}$ for some $n\geq 2$%
. In particular $X$ contains a copy of $M_{d_{0},d_{1}}$ and $Y$ contains a
copy of $M_{e_{0},e_{1}}$ where $e_{0}d_{0}=e_{1}d_{1}=n$, and both $\left(
d_{0},d_{1}\right) $ and $\left( e_{0},e_{1}\right) $ are different from $%
\left( 1,1\right) $. For simplicity of notation, we will assume that $%
d_{0}=e_{1}=n$ and $d_{1}=e_{0}=1$. In the general case one can proceed
analogously, by replacing the operator $\mathbb{E}_{D_{n}}$ with another
completely contractive projection depending on $d_{0},d_{1}$. In view of
Remark \ref{Remark:limit}, we can write $X$ and $Y$ as the direct limit of
sequences $\left( X_{k}\right) $ and $\left( Y_{k}\right) $ of
finite-dimensional $n$-minimal TROs with completely isometric connective
maps $\phi _{k}:X_{k}\rightarrow X_{k+1}$ and $\psi _{k}:Y_{k}\rightarrow
Y_{k+1}$ such that $X_{1}=M_{n,1}$ and $Y_{1}=M_{1,n}$. We let $\phi _{k,d}$
be $\phi _{d-1}\circ \phi _{d-2}\circ \cdots \circ \phi _{k}$ for $k<d$ and
similarly define $\psi _{k,d}$. We will identify $X_{k}$ and $Y_{k}$ with
subspaces of $X$ and $Y$ respectively. Let $f:M_{n}\rightarrow X_{1}\otimes
Y_{1}\subset X\otimes Y$ be the complete isometry given by the canonical
identification of $M_{n}$ with $M_{n,1}\otimes M_{1,n}$. Let $\phi
:M_{n}\rightarrow M_{n}\oplus ^{\infty }M_{n}$ be defined by $a\mapsto
\left( a,\mathbb{E}_{D_{n}}\left( a\right) \right) $. Fix $\varepsilon >0$
and $\delta >0$ small enough. By the homogeneity property of $\mathbb{G}_{n}$%
, there exist $k\in \mathbb{N}$ and a complete isometry $g:M_{n}\oplus
^{\infty }M_{n}\rightarrow X_{k}\otimes Y_{k}$ such that $\left\Vert g\circ
\phi -\left( \phi _{1,k}\otimes \psi _{1,k}\right) \circ f\right\Vert
<\delta $. We have that $X_{k}=M_{s_{1},t_{1}}\oplus ^{\infty }\cdots \oplus
^{\infty }M_{s_{\ell },t_{\ell }}$ and $Y_{k}=M_{v_{1},w_{1}}\oplus ^{\infty
}\cdots \oplus ^{\infty }M_{v_{m},w_{m}}$ for some $s_{i},t_{i},v_{i},w_{i}%
\in \mathbb{N}$. When $\delta $ is small enough, there exist $1\leq i\leq
\ell $ and $1\leq j\leq m$, completely contractive maps $\eta
:M_{n,1}\rightarrow M_{s_{i}t_{i}}$ and $\rho :M_{1,n}\rightarrow
M_{v_{j}w_{j}}$, and unitaries $u,v\in M_{m}$ such that $%
s_{i}v_{j}=t_{i}w_{j}=n$, and $\left\Vert u^{\ast }\left( \eta \left(
a\right) \otimes \rho \left( b\right) \right) v-\mathbb{E}_{D_{n}}\left(
a\otimes b\right) \right\Vert <\varepsilon $ for every $a\in M_{n,1}$ and $%
b\in M_{1,n}$ such that $\left\Vert a\right\Vert \leq 1$ and $\left\Vert
b\right\Vert \leq 1$. This leads to a contradiction when $\varepsilon $ is
small enough.
\end{proof}

\begin{corollary}
$\mathbb{G}_{n}$ is not completely isometrically isomorphic to $M_{n}\otimes 
\mathbb{G}$.
\end{corollary}

A similar argument can be used to prove that the Gurarij Banach space can
not be written in a nontrivial way as a tensor product of Lindenstrauss
spaces. Indeed, suppose that $X$ and $Y$ are Lindenstrauss spaces of
dimension at least $2$ such that $X\otimes Y$ is isometrically isomorphic to 
$\mathbb{G}$. Then $X$ and $Y$ contain an isometric copy of $\ell
_{2}^{\infty }$. One then reason as in the proof of Proposition \ref%
{Proposition:indecomposable} where $X_{1}=Y_{1}=\ell _{2}^{\infty }$, $%
f:\ell _{4}^{\infty }\rightarrow X_{1}\otimes Y_{1}$ is the canonical
isometric isomorphism, and $\phi :\ell _{4}^{\infty }\rightarrow \ell
_{5}^{\infty }$ is defined by $\left( z_{1},z_{2},z_{3},z_{4}\right) \mapsto
\left( z_{1},z_{2},z_{3},z_{4},\left( z_{1}+z_{4}\right) /2\right) $. This
generalizes a result of Oikhberg, who showed that the Gurarij space $\mathbb{%
G}$ is not isometrically isomorphic to $\mathbb{G}\otimes \mathbb{G}$ \cite%
{oikhberg_personal_2015}.

One can also prove with similar methods that $\mathbb{NG}$ is not completely
isometrically isomorphic to $V\otimes X$ for any $\mathcal{OL}_{\infty ,1+}$%
-space $X$ and finite-dimensional TRO $V$ of dimension at least $2$. We do
not know whether $\mathbb{NG}\otimes \mathbb{NG}$ is completely
isometrically isomorphic to $\mathbb{NG}$.

\subsection{The model theory of $\mathbb{NG}$\label{Subsection:model}}

Here we want to observe that the noncommutative Gurarij space, from the
model-theoretic perspective, plays a canonical role within the class of
operator spaces, in close similarity with the commutative case. A good
introduction to the logic for metric structures can be found in \cite%
{ben_yaacov_model_2008}. We will regard operator spaces as structures in the
language considered in \cite[Appendix B]{goldbring_kirchbergs_2015}, where
it is observed that---in view of Ruan's characterization---operator spaces
form an axiomatizable class. The arguments from \cite%
{goldbring_model-theoretic_2015}, where Choi's theorem \cite[Theorem 2]%
{choi_completely_1975} is replaced by Proposition \ref%
{Proposition:perturbation} and Remark \ref{Remark:limit}, can be easily
adapted to prove the following result.

\begin{theorem}
\label{Theorem:model}Let $\mathbb{NG}$ be the noncommutative Gurarij space,
regarded as a structure in the language for operator spaces.

\begin{enumerate}
\item $\mathbb{NG}$ is the unique separable $1$-exact existentially closed
operator space.

\item $\mathbb{NG}$ is the unique separable nuclear model of its first-order
theory,

\item Every embedding of $\mathbb{NG}$ into its ultrapower $\mathbb{NG}%
^{\omega }$ admits a lift of surjective complete isometries of $\mathbb{NG}$
and, in particular, it is elementary;

\item $\mathbb{NG}$ is the prime model if its first-order theory;

\item $\mathbb{NG}$ does not have quantifier elimination, hence the theory
of operator spaces does not have a model companion;

\item A $1$-exact operator space is nuclear if and only if it is positively
existentially closed.
\end{enumerate}
\end{theorem}

The analog of Theorem \ref{Theorem:model} for the spaces $\mathbb{G}_{n}$
for $n\in \mathbb{N}$ has been proved in \cite%
{goldbring_model-theoretic_2015}. It is worth remarking that the Cuntz
algebra $\mathcal{O}_{2}$ satisfies similar properties within the class of
unital C*-algebras: it is the prime model of its first-order theory \cite[%
Proposition 5.1]{goldbring_model-theoretic_2015}, does not have quantifier
elimination \cite[Theorem 2.7]{eagle_quantifier_2015}, and it is the only
possible existentially closed nuclear unital separable C*-algebra \cite[%
Proposition 2.18]{goldbring_kirchbergs_2015}. The assertion that $\mathcal{O}%
_{2}$ is indeed existentially closed is equivalent to Kirchberg's embedding
problem, asking whether every unital C*-algebra embeds into an ultrapower of 
$\mathcal{O}_{2}$ \cite[Theorem 3.3]{goldbring_model-theoretic_2015}.

The remarks above allow one to conclude that, also from the model-theoretic
perspective, $\mathbb{NG}$ can be regarded as the operator-space analog of
the Cuntz algebra $\mathcal{O}_{2}$.

\bibliographystyle{amsplain}
\bibliography{OS-Fr-bib}

\end{document}